\theoremstyle{plain}
\newtheorem{thm}{Theorem}[section]
\theoremstyle{plain}
\newtheorem{lem}[thm]{Lemma}
\newtheorem{prop}[thm]{Proposition}
\theoremstyle{definition}
\newtheorem{defi}{Definition}[section]
\newtheorem*{rem}{Remark}
\newtheorem*{thmm}{Theorem}
\newcommand{\De} {\Delta}
\newcommand{\la} {\lambda}
\newcommand{\bn}{\mathbb{B}^{2}}
\newcommand{\rn}{\mathbb{R}^{2}}
\newcommand{\authorfootnotes}{\renewcommand\thefootnote{\@fnsymbol\c@footnote}}%
\numberwithin{equation}{section} \allowdisplaybreaks
        \title[]{Existence results for semilinear problems in the two dimensional  
hyperbolic space involving critical growth }
\date{}
\author{Debdip Ganguly${^*}$ \and Debabrata Karmakar$^\dagger$ }
\address{ ${*}$ Dipartimento di Scienze Matematiche, 
Politecnico Di Torino,  Corso Duca degli Abruzzi, 24,
 10129 Torino, Italy.\\
$^\dagger$ Centre for Applicable Mathematics,
    Tata Instiute of Fundamental Research,
     P.O.\ Box 6503, GKVK Post Office,
   Bangalore 560065, India}
\begin{document}

\begin{abstract}
We consider semilinear elliptic problems on two-dimensional hyperbolic space. A model problem of our study is 
\[
  -\De_{g_{\bn}} u = f(x,t), \  u  \in H^{1}(\bn),
\]
 where $H^{1}(\bn)$ denotes the Sobolev space on the disc model of the hyperbolic space and
$f(x,t)$ denotes the function of critical growth in dimension two. 
We first establish the Palais-Smale(P-S) condition for the functional corresponding to the above equation and using (P-S)
condition we obtain existence of solutions. In addition, using concentration argument, we also explore existence
of infinitely many sign changing solutions.
\end{abstract}

\thanks{ AMS subject classifications :  35J20, 35J60 , 35J61, 35B33, 58J05 \\
keywords: Semilinear elliptic problem,  Hyperbolic space,
  Critical growth,  Moser-Trudinger inequality.}

\maketitle

\section{Introduction}

In this article we are concerned with the existence and multiplicity of solutions of the following problem

\begin{equation} \label{main1}
 -\De_{g_{\mathbb{B}^{N}}} u = f(x,u), \  u \in H^{1}(\mathbb{B}^{N}),
\end{equation}
where ${H^{1}}(\mathbb{B}^{N})$ 
denotes the Sobolev
space on the disc model of the hyperbolic space $\mathbb{B}^{N}$ endowed with the Poincar\`{e} metric $g_{\mathbb{B}^{N}},$
 $\De_{g_{\mathbb{B}^{N}}}$  denotes the Laplace Beltrami operator
 on $\mathbb{B}^{N}$ and  $f : {\mathbb{B}^{N}} \times \mathbb{R} \rightarrow \mathbb{R} $ be a $C^{1}$ function 
with $f(x,-t) = -f(x,t).$ \\

Eq. \eqref{main1} has been the subject of intensive research in the past few years after its connection with various 
geometrical problems were discovered. For example, \eqref{main1} with  
 $f(x,t) = \lambda t + |t|^{p-2}t,$  $2 < p \leq \frac{2N}{N-2}$ when $N \geq 3$ and $2 < p < \infty$ when $N = 2,$ 
arises in the study of 
Grushin Operator \cite{B}, Hardy-Sobolev-Maz'ya equation ( \cite{HS1}, \cite{HS}, \cite{MS}) and prescribing
Webster curvature on the Heisenberg group. 
In this case, a great attention has been devoted to the study of positive 
solutions. More precisely existence, uniqueness, regularity, symmetry and  nondegeneracy properties of positive solutions has been 
thoroughly investigated in ( \cite{BGG}, \cite{DS1} and \cite{MS}).

In the seminal paper \cite{MS}, with the above choice of $f$ and $p$ subcritical it has been shown that the problem always 
admits a positive solution. The solutions were also shown to be unique upto hyperbolic isometries except in the case
of dimension two. However when $ N \geq 3$ and $ p = \frac{2N}{N-2},$ i.e., the critical case, the study of existence of solutions
become more interesting due to the lack of compactness of the Sobolev embedding in the hyperbolic space. It has been shown that 
Eq. \eqref{main1} admits a positive solution provided $ \frac{N(N-2)}{4} < \lambda \leq \left( \frac{N-1}{2} \right)^2.$
This is a contrast with the Euclidean case where a positive solution 
 do exist iff $\lambda = 0,$  it is unique upto translations and dilations and is explicitly known.

So the next step is to characterize all sign changing solutions. Existence of sign changing 
solutions has been investigated in (\cite{BGG}, \cite{PS}). Furthermore, extension to general manifolds  
 are also discussed in \cite{EBG}.  The results in \cite{EBG} hold for quite general nonlinearities 
$f$  and non energy solutions are also dealt with. However the critical case $p = \frac{2N}{N-2},$ the problem become 
more delicate and has been thoroughly studied in \cite{DS}. One of the important results obtained in \cite{DS} is the existence
of infinitely many sign changing radial solutions for $N \geq 7.$ So the  question remains open for $N \leq 6.$

In this article, we are interested in the problem \eqref{main1} when $N = 2$ and the nonlinearity is 
``Critical''.  Criticality comes from the critical Sobolev embedding, more precisely Moser-Trudinger inequality (see \cite{JM}). 
First let us recall the Moser-Trudinger (M-T) inequality on the hyperbolic space. 
Recently Mancini-Sandeep in \cite{MS1} and Adimurthi-Tintarev in \cite{ADIT}, proved that
M-T holds true in the hyperbolic space. In fact they showed that:
\begin{thmm} (\cite{MS1}):
 Let $\mathbb{D}$ be the unit open disc in $\mathbb{R}^2,$ endowed with a conformal metric $h = \rho g_{e},$ where 
$g_{e}$ denotes the Euclidean metric and $\rho \in C^{2}(\mathbb{D}), \rho > 0,$ then

 \begin{equation}\label{mtr}
 \sup_{u \in C_{0}^{\infty}(\mathbb{D}), \int_{\mathbb{D}} |\nabla_{h} u|^2  \leq 1} \int_{\mathbb{D}} 
\left (e^{4\pi u^2} - 1 \right) \ dv_{h} < \infty, 
 \end{equation}
holds true if
and only if $h \leq cg_{\bn}$ for some positive constant $c.$
\end{thmm}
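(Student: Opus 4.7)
The engine driving both directions is the conformal invariance of the two-dimensional Dirichlet integral. Writing $h=\rho g_e$, one has $|\nabla_h u|^2 = \rho^{-1}|\nabla u|^2$ (with $|\cdot|$ and $\nabla$ Euclidean on the right) and $dv_h = \rho\,dx$, hence
\[
\int_{\mathbb{D}} |\nabla_h u|^2\, dv_h \;=\; \int_{\mathbb{D}} |\nabla u|^2\, dx, \qquad \int_{\mathbb{D}}\bigl(e^{4\pi u^2}-1\bigr)\,dv_h \;=\; \int_{\mathbb{D}}\bigl(e^{4\pi u^2}-1\bigr)\rho\,dx.
\]
The constraint set is thus independent of $\rho$, and the whole question reduces to deciding for which conformal factors $\sup_{\int|\nabla u|^2\le 1}\int(e^{4\pi u^2}-1)\rho\,dx$ is finite. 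Since $h\le cg_{\bn}$ is exactly $\rho\le c\rho_{\bn}$ with $\rho_{\bn}=4/(1-|x|^2)^2$, the claim becomes: finiteness of the supremum is equivalent to $\rho\le c\rho_{\bn}$.

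For the sufficient direction, $\rho\le c\rho_{\bn}$ trivially reduces the problem to the case $\rho=\rho_{\bn}$, i.e.\ to the hyperbolic Moser--Trudinger inequality $\sup\int(e^{4\pi u^2}-1)\,dv_{g_{\bn}}<\infty$. I would prove this by covering $\bn$ with hyperbolic balls $\{B(p_k,R)\}_k$ of fixed hyperbolic radius $R$ and bounded overlap multiplicity (available because $(\bn,g_{\bn})$ is homogeneous), and transporting each ball to the origin via a M\"obius transformation, which preserves both $dv_{g_{\bn}}$ and $\int|\nabla u|^2\,dx$; this reduces each local piece to classical Moser--Trudinger on a fixed Euclidean disc. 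Summing across balls is controlled by the linear bound $\int(e^{4\pi v^2}-1)\,dx\lesssim\int|\nabla v|^2\,dx$ valid when the right-hand side is small, together with the fact that at most finitely many balls can carry a gradient mass above a prescribed threshold. A cleaner alternative would be hyperbolic symmetrization, reducing to a one-dimensional Moser-type problem on $[0,\infty)$ with volume element $2\pi\sinh t\,dt$.

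For the necessary direction I argue by contradiction. If $\rho/\rho_{\bn}$ is unbounded, choose $x_n\in\mathbb{D}$ with $(1-|x_n|)^2\rho(x_n)\to\infty$, set $r_n=(1-|x_n|)/2$ so that $B_{r_n}(x_n)\subset\mathbb{D}$, $\epsilon_n=r_n/n$, and plug in the standard Moser functions
\[
M_n(x)=\frac{1}{\sqrt{2\pi}}\begin{cases}\sqrt{\log(r_n/\epsilon_n)}, & |x-x_n|\le\epsilon_n,\\ \log(r_n/|x-x_n|)/\sqrt{\log(r_n/\epsilon_n)}, & \epsilon_n<|x-x_n|\le r_n,\\ 0, & \text{otherwise.}\end{cases}
\]
A direct computation yields $\int|\nabla M_n|^2\,dx = 1$ and $e^{4\pi M_n^2}=n^2$ on $B_{\epsilon_n}(x_n)$, so by continuity of $\rho$,
\[
\int\bigl(e^{4\pi M_n^2}-1\bigr)\rho\,dx \;\ge\; (n^2-1)\,\pi\epsilon_n^2(1+o(1))\,\rho(x_n) \;=\; \pi(1-n^{-2})(1+o(1))\,r_n^2\rho(x_n)\;\longrightarrow\;\infty,
\]
contradicting the finiteness of the supremum. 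The real obstacle throughout is the sufficiency step: because $\bn$ has infinite hyperbolic volume and only a gradient constraint is imposed, neither Moser's compact-domain argument nor a naive Ruf-type trick with $\int u^2\,dv_{g_{\bn}}$ yields the sharp exponent $4\pi$, and one genuinely has to exploit the hyperbolic isometry group, through either M\"obius-covering or symmetrization.
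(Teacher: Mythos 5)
First, a framing remark: the paper does not prove this statement; it is quoted from Mancini--Sandeep \cite{MS1} (and \cite{ADIT}) as background, so there is no in-paper proof to compare against. Your overall strategy is nevertheless the standard one and matches the machinery the paper does import: conformal invariance of the two-dimensional Dirichlet integral collapses the constraint to $\int_{\mathbb{D}}|\nabla u|^2\,dx\le 1$, sufficiency reduces by monotonicity to $\rho=4/(1-|x|^2)^2$ and is handled by the M\"obius-covering argument of Adimurthi--Tintarev (the same device used in Lemma \ref{L3.5}), and necessity is tested with Moser functions. The sufficiency half is only a sketch; the one point worth making explicit there is why the finitely many ``bad'' balls contribute a bounded amount: after transporting $\phi_i(U)$ back to the fixed relatively compact piece $U$, the hyperbolic volume element is comparable to Lebesgue measure on $U$, so the classical sharp Moser--Trudinger inequality on the unit disc applied to $u\circ\phi_i\in H^1_0(\mathbb{D})$ finishes that case; without this observation the bad-ball estimate looks circular.

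The genuine gap is in the necessity direction. You assert $\int_{B_{\epsilon_n}(x_n)}\rho\,dx=\pi\epsilon_n^2\rho(x_n)(1+o(1))$ ``by continuity of $\rho$''. This is unjustified: $\rho$ is only continuous on the \emph{open} disc, with no uniform modulus of continuity as $x_n\to\partial\mathbb{D}$, so $\rho$ may fall far below $\rho(x_n)$ already inside $B_{\epsilon_n}(x_n)$. Concretely, take $\rho=1$ plus a sequence of spikes of height $k/(1-|y_k|)^2$ centered at $y_k\to\partial\mathbb{D}$ but supported on balls of radius much smaller than $\epsilon_k$: then $\rho/\rho_{\bn}$ is unbounded, yet your test functions yield a bounded quantity and the contradiction never materializes. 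The fix is to reverse the order of limits. Fix $x_0$, keep the outer radius $r=(1-|x_0|)/2$, and let the inner radius $\epsilon\to 0$. Since $e^{4\pi M^2}-1=(r/\epsilon)^2-1$ on $B_\epsilon(x_0)$, finiteness of the supremum $S$ gives $\bigl((r/\epsilon)^2-1\bigr)\int_{B_\epsilon(x_0)}\rho\,dx\le S$, i.e.
\begin{equation*}
\frac{1}{\pi\epsilon^2}\int_{B_\epsilon(x_0)}\rho\,dx\;\le\;\frac{S}{\pi\,(r^2-\epsilon^2)}.
\end{equation*}
Letting $\epsilon\to0$ and using continuity at the \emph{fixed interior} point $x_0$ yields $\rho(x_0)\le 4S/\bigl(\pi(1-|x_0|)^2\bigr)\le C/(1-|x_0|^2)^2$ for every $x_0$, which is the desired pointwise bound with no uniformity issue and no contradiction argument. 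With that replacement the necessity direction is sound; the remaining computations (normalization $\int|\nabla M_n|^2\,dx=1$, $e^{4\pi M_n^2}=n^2$ on the core) are correct.
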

The above inequality \eqref{mtr} is sharp, in the sense that the ``critical'' constant $4 \pi$ cannot be improved. We refer 
 \cite{LM}, \cite{MS2} and \cite{TMS}  for Moser-Trudinger inequality in the 
higher dimensional hyperbolic space. However the existence 
of extremals of the above (M-T) inequality is still an \emph{open question}. In this direction some partial results 
were obtained by Manicini-Sandeep-Tintarev \cite{TMS}. They showed the existence of extremals for a modified 
Moser-Trudinger inequality. In particular they proved the following :

 \[
  \tilde S : = \displaystyle{\sup_{||u||_{\mathcal{H}} \leq 1}} \int_{\bn} \left( e^{4\pi u^2} - 1 - 4 \pi u^2 \right) \ dv_{g_{\bn}},  
 \]
is finite and attained or in other words the corresponding Euler Lagarange equation 
\begin{equation}\label{EULER}
 - \Delta_{g_{\bn}} u - \frac{1}{4} u = \beta u (e^{4 \pi u^2} - 1), \ \beta = \frac{1}{\int_{\bn} u^2(e^{4 \pi u^2} - 1)
 \ dv_{g_{\bn}}} 
\end{equation}
admits a positive (radial) solution in $\mathcal{H}$ where $\mathcal{H}$ denotes closure of $C^{\infty}_{0}(\bn)$
with respect to the norm 
\[
 ||u||_{\mathcal{H}}^2 = \int_{\bn} \left [ |\nabla_{g_{\bn}} u|^2 - \frac{1}{4}  |u|^2 \right] \ dv_{g_{\bn}}. 
\]
 Now it is important to remark that the solution of  Eq. \eqref{EULER} $u$ satisfies 
\[
 |u(x)| \geq C (1 - |x|^2)^{\frac{1}{2}},
\]
 and hence not an element of $H^{1}(\bn).$\\

Motivated by the Euler-Lagrange equation \eqref{EULER} satisfied by the Moser-Trudinger inequality,
 we plan to address the question of existence of solutions to problem \eqref{main1} in
dimension two and involving exponential nonlinearity.
In particular, we are interested in the 
 existence of  positive solutions, sign changing solutions and their multiplicity when 
$N = 2$ and $f(x,t) = h(x,t) (e^{\lambda t^2} - 1 )$ is a function of critical growth (see Definition \ref{1}).
 Hence from now onwards we shall consider the following problem 
\begin{equation} \label{main}
 -\De_{g_{\bn}} u = h(x,u) (e^{\lambda u^2} - 1), \  u \in H^{1}(\bn).
\end{equation}

In the Euclidean setting, i.e., 
when Eq. \eqref{main} is posed on  $\Omega \subset \rn,$  a bounded domain, many important existence results were obtained, see for example, 
 Carleson-Chang \cite{CC1}, Atkinson-Peletier \cite{AP}, Adimurthi et al (\cite{ADI}, \cite{ADI1}), 
Panda (\cite{PANDA}, \cite{PANDA1}),
de Figueiredo et al(\cite{DR1}, \cite{DR3}) etc.
Adimurthi \cite{ADI} proved 
existence of non trivial solution and also established Palais-Smale condition for the functional 
corresponding to Eq. \eqref{main}. Thereafter the focus had been on the existence of sign changing solutions. 
In \cite{ADI1}, Adimurthi-Yadava obtained existence of sign changing solution 
when $ {\sup_{x \in \overline{\Omega} }} \
f^{\prime}(x,0) < \mu_{1}(\Omega),$ where $\mu_{1}(\Omega)$ denotes the first eigenvalue 
of Dirichlet boundary value problem involving Euclidean Laplacian.
 In addition, they also proved, when $\Omega$ is a Euclidean ball, 
\eqref{main} admits infinitely many radial sign changing solutions. 
Also in the  critical case Adimurthi-Srikanth-Yadava \cite{AYS} obtained non existence results under some suitable 
conditions for the 
Euclidean setting.
However a complete study of the borderline between existence and non existence has 
been provided  by Adimurthi-Prashanth in \cite{APR}.
All these results uses the variational approach in order to tackle existence results. The key step in using such
a theory is the verification of conditions which allow the use of the Palais-Smale
condition. Recently Guozhen-Nguyen in \cite{LLG}, obtained existence of solutions 
of \eqref{main} without assuming Ambrosetti-Rabinowitz condition.

Before going further, we first introduce the definition of critical growth function.
In view of the Moser-Trudinger embedding in the Euclidean setting, the notion of functions of critical growth was first 
introduced by Adimurthi in \cite{ADI}. However in the same spirit we intend to generalize the concept in the hyperbolic setting.
 The recent development of Moser-Trudinger inequality in the hyperbolic space \cite{MS1}, enables us to define the the following class of critical growth functions.:  
\begin{defi}\label{1}
  Let $h:  \bn \times \mathbb{R} \rightarrow \mathbb{R} $ be a $C^{1}$- function and $\la >0.$ 
The function $f(x,t) = h(x,t) (e^{\la t^2}-1)$   is said to be a function of critical growth on $\bn$ if  $f(x,t) > 0$ for $t > 0,$  $f(x,-t) = -f(x,t)$ and 
 satisfies the following growth conditions  :\\
There exists a constant $M_1 > 0$ such that, for every $\epsilon > 0$ and for all $(x,t) \in  \bn \times (0, \infty),$\\

(C1) \ $h(.,.) \in L^{\infty}(\bn \times [-L,L])$ for all $L > 0,$ and
\[
\sup_{x \in \bn} \ h(x,t) = O(t^a), \ \mbox{near} \ t = 0, \ \mbox{for some} \  a>0. 
\]

(C2) $ f^{\prime}(x,t) > \frac{f(x,t)}{t},$ where  $f^{\prime}(x,t)= \frac{\partial f}{\partial t}(x,t).$ \\

(C3) \ $F(x,t) \leq M_1(g(x) + f(x,t)),$ where $F(x,t) = \int_{0}^{t} f(x,s) ds,$ and $g \in L^{1}(\bn,dv_{g_{\bn}}).$ \\

(C4) \   For any compact set $K \subset \bn,$ it holds,
 \[ 
\lim_{t \rightarrow \infty} \displaystyle{\inf_{ x \in K }} \ h(x,t) e^{\epsilon t^2} = \infty, \ \mbox{and} \
\lim_{t \rightarrow \infty} \displaystyle{\sup_{x \in \bn }} \  h(x,t) e^{-\epsilon t^2} = 0.
\]   

\end{defi}  
For examples of functions of critical growth we refer to section 2. Moreover the class of critical growth functions defined above does not
depend on the choice of the origin, that is,  radiality assumption
can be posed with respect to an arbitrary point in the
hyperbolic space, and changing this point to the origin by M\"{o}bius
transformation will not change the assumptions in Definition \ref{1}.\\

 Now we will briefly discuss some of the hurdles  we may encounter in dealing with the problem in the hyperbolic space.
First of all we have to deal with the infinite volume case which makes the problem very different from the bounded one.
Secondly, one of the major difficulty comes from the lack of compactness. 
 The lack of compactness can occur  due to the concentration phenomenon as well as through the vanishing of mass in the sense of 
the concentration compactness of Lions (see \cite{pl1}).
 In the Euclidean case, by dilating a given sequence we can assume that all the functions involved
has a fixed positive mass in a given ball and hence we can overcome the
vanishing of the mass, but in the case of hyperbolic space $\bn$ this is not possible as the
conformal group of $\bn$ is the same as the isometry group.
 We will overcome this difficulty by using the growth estimates near infinity.\\

To the best of our knowledge, this is the first article which deals with the critical growth function in the two-dimensional hyperbolic
space. We establish Palais-Smale condition for the functional corresponding to \eqref{main} (see Theorem \ref{PS}), which 
led us to the following existence theorem :

\begin{thm}\label{mt1}

Let $f$ be a function of critical growth. Further, assume that $f$ is radial and
for any $K \subset \bn$ compact there holds
\begin{equation}
   \lim_{t \rightarrow \infty } \inf_{x \in K} h(x,t) t = \infty.
\end{equation}

%
Then Eq. \eqref{main} admits a positive solution.
\end{thm}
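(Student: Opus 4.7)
The plan is to produce a positive solution of \eqref{main} as a Mountain Pass critical point of the energy functional
\[
J(u) = \frac{1}{2}\int_{\bn} |\nabla_{g_{\bn}} u|^2 \, dv_{g_{\bn}} - \int_{\bn} F(x,u^+) \, dv_{g_{\bn}}, \quad u \in H^1(\bn),
\]
where $u^+ = \max(u,0)$. Any nontrivial critical point $u$ of this truncated functional satisfies $-\Delta_{g_{\bn}} u = f(x,u^+) \geq 0$, so testing with $u^-$ forces $u \geq 0$, and the strong maximum principle upgrades this to $u > 0$.

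The first step is to verify the Mountain Pass geometry. From (C1) and the explicit form of $f$ one has $|F(x,t)| = O(|t|^{a+3})$ near $t=0$ while $|F(x,t)| \leq C(e^{(\lambda+\epsilon) t^2} - 1)$ for $|t|$ large; combining these with the Moser--Trudinger inequality \eqref{mtr} on $\bn$ yields constants $\rho, \alpha > 0$ with $J(u) \geq \alpha$ whenever $\|u\|_{H^1(\bn)} = \rho$. Condition (C4) provides enough superlinear growth to exhibit a fixed nonnegative $v \in C^\infty_0(\bn)$ with $J(tv) \to -\infty$ as $t \to \infty$. Hence the Mountain Pass value
\[
c = \inf_{\gamma \in \Gamma}\sup_{t\in[0,1]} J(\gamma(t)), \quad \Gamma=\{\gamma\in C([0,1],H^1(\bn))\, :\, \gamma(0)=0,\ J(\gamma(1))<0\},
\]
is well defined and strictly positive.

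The core of the argument is the estimate $c < \frac{2\pi}{\lambda}$, which should be the level below which the Palais--Smale condition of Theorem \ref{PS} applies (since this forces $\|u_n\|^2 < \frac{4\pi}{\lambda}$ on a PS sequence and hence uniform integrability of the exponential nonlinearity). For this I would insert into $J$ a standard Moser concentration sequence, localized in a small Euclidean ball around the origin where $g_{\bn}$ is comparable to $g_e$,
\[
M_n(x) = \frac{1}{\sqrt{2\pi}}\begin{cases}(\log n)^{1/2}, & |x|\le r/n,\\ (\log n)^{-1/2}\log(r/|x|), & r/n\le |x|\le r,\\ 0, & |x|\ge r,\end{cases}
\]
and optimize the path $t \mapsto tM_n$. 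The strengthened hypothesis $\lim_{t\to\infty}\inf_{x\in K} h(x,t)\,t = \infty$ is precisely Adimurthi's condition from \cite{ADI}: it yields an extra factor of $t$ in the lower bound of $\int F(x,tM_n)$ that is needed to push the maximum along the path strictly below $\frac{2\pi}{\lambda}$. This step is the main technical obstacle, requiring careful bookkeeping of the logarithmic corrections in $M_n$ together with the hyperbolic volume element, which on the small support of $M_n$ contributes only a bounded multiplicative factor.

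Once $c < \frac{2\pi}{\lambda}$ is known, the Mountain Pass lemma produces a Palais--Smale sequence at level $c$; by Theorem \ref{PS} this sequence converges (up to subsequence) in $H^1(\bn)$ to a nontrivial critical point $u$ of $J$, and the positivity argument sketched above completes the proof. The radiality assumption on $f$ plays essentially no role in the Mountain Pass construction itself, but it is used implicitly in the statement and application of Theorem \ref{PS}, where radial symmetry is exploited to rule out loss of compactness at infinity coming from hyperbolic translations.
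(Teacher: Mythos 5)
Your proposal is correct in outline but follows a genuinely different route from the paper. The paper does not use the Mountain Pass theorem at all: it minimizes $J_{\lambda}$ over the Nehari manifold $\mathcal{M}=\{u\in H^{1}_{R}(\bn)\setminus\{0\}: \|u\|^{2}=\int_{\bn}f(x,u)u\,dv_{g}\}$, shows via Theorem \ref{PS}(ii) (whose Step 2 reduces, through the claim about the scaling $\gamma(u)u\in\mathcal{M}$, to the Moser-function Lemma \ref{L3.7}) that $0<\eta(f)^{2}<\frac{4\pi}{\lambda}$, and then runs the weak-convergence analysis (Propositions \ref{L3.3}, \ref{L3.4}, \ref{L3.6}) directly on a minimizing sequence rather than on a Palais--Smale sequence; positivity comes for free from $J_{\lambda}(u)=J_{\lambda}(|u|)$ together with Lemma \ref{abstract}(1), with no truncation. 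Your approach and the paper's rest on the same two pillars --- compactness below the level $\frac{2\pi}{\lambda}$ and a Moser-function insertion exploiting $\lim_{t\to\infty}\inf_{K}h(x,t)t=\infty$ to get strictly below that level --- and under (C2) the mountain pass level coincides with $\frac{\eta(f)^{2}}{2}$, so the two level estimates are equivalent. What the Nehari route buys is that the objects $\mathcal{M}$, $\mathcal{M}_{1}$, $\eta$, $\eta_{1}$ are reused verbatim for the sign-changing results (Theorems \ref{mt2} and \ref{mt3}); what your route buys is a more self-contained positivity argument that does not invoke the Cerami--Solimini--Struwe lemma.

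Two caveats. First, the truncated nonlinearity $\tilde f(x,t)=f(x,t^{+})$ is not odd, hence is not literally ``a function of critical growth'' in the sense of Definition \ref{1}, so Theorem \ref{PS} does not apply to your truncated functional as stated; you must either re-run the Palais--Smale argument for $\tilde f$ (it goes through, since all the estimates in Section 4 only use the behaviour of $f$ on $t>0$), or drop the truncation and recover positivity from the evenness of $F$ as the paper does. Second, the mountain pass must be performed in $H^{1}_{R}(\bn)$ (not $H^{1}(\bn)$) so that the radial estimate \eqref{radialestimate} is available to rule out vanishing at infinity, with the principle of symmetric criticality invoked at the end; you acknowledge this, but it should be made explicit in the construction of the admissible paths. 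Also, your parenthetical that the level restriction ``forces $\|u_{n}\|^{2}<\frac{4\pi}{\lambda}$ on a PS sequence'' is not how Theorem \ref{PS} actually works when the weak limit is nonzero --- there the paper uses the hyperbolic Lions Lemma \ref{L3.5} via Proposition \ref{L3.6} --- but since you cite Theorem \ref{PS} as a black box this does not affect the argument.
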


\begin{rem}
 If we write in the  Euclidean coordinate, then Theorem \ref{mt1} tells us that,
 \begin{equation}
  \Delta u = \left(\frac{2}{1 - |x|^2}\right)^2 h(|x|,u)(e^{\la u^2} - 1)
 \end{equation}
has a radial solution in $H^1_0(\bn),$ under the assumption that $h(x, u) = O(u^a)$ for some $a > 0$ near $``u = 0".$  This
allows us to consider the quadratic singularity (or integrability) at the boundary i.e.,  of order $\frac{1}{(1 - |x|^2)^{2}}.$   
\end{rem}

\begin{rem} 
The above theorem is also true for $f$ non radial with some assumption on the growth of $f.$ Please see section 7, Theorem 
\ref{main theorem for nonradial} for further details. 
\end{rem}

Also using variational methods and concentration argument we obtain the following result :

\begin{thm}\label{mt2}
 Let $f$ be a function of critical growth, radial  and
given any $ N >0$ and compact set $K \subset \bn,$ there exists $t_{N,K} > 0$ such that 
\begin{equation}\label{c4}
\inf_{x \in K} h(x,t)t \geq e^{Nt}, \ \ \forall t \geq t_{N,K} 
 \end{equation}
holds.
Then \eqref{main} has a radial sign changing solution.
\end{thm}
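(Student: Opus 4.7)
The natural framework is a Nehari-manifold approach adapted to sign-changing functions, carried out in the radial subspace $H^{1}_{\mathrm{rad}}(\bn)$ where compactness in subcritical Orlicz norms is available. Let
\[
I(u) = \tfrac{1}{2}\int_{\bn} |\nabla_{g_{\bn}} u|^{2}\, dv_{g_{\bn}} - \int_{\bn} F(x,u)\, dv_{g_{\bn}}
\]
be the associated energy and define the ``sign-changing Nehari set''
\[
\mathcal{M} = \bigl\{ u \in H^{1}_{\mathrm{rad}}(\bn) \ : \ u^{\pm} \not\equiv 0,\ \langle I'(u), u^{+}\rangle = \langle I'(u), u^{-}\rangle = 0 \bigr\}.
\]
The plan is to realize $c := \inf_{\mathcal{M}} I$ as the energy of a genuine sign-changing critical point.

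First, using hypotheses (C1)--(C3), I would show that $\mathcal{M}$ is non-empty and that $I$ is bounded below on it; the standard fibering map $(s,t)\mapsto I(su^{+}+tu^{-})$ has a unique maximizer in $(0,\infty)^{2}$, and condition (C2) (which is the strict-super-Nehari condition) ensures $\mathcal{M}$ is a $C^{1}$-manifold on which constrained critical points are free critical points of $I$. Second, I would build a competitor to show
\[
c < \alpha_{\lambda} := \tfrac{2\pi}{\lambda},
\]
the Palais--Smale threshold identified in Theorem \ref{PS}. This is the step where hypothesis \eqref{c4} enters decisively: following the Moser--Adimurthi--Yadava construction, pick two small disjoint geodesic balls $B_{1}, B_{2}\subset\bn$ and, written in conformal coordinates, insert scaled Moser functions
\[
m_{\varepsilon}(x) = \frac{1}{\sqrt{2\pi}}\,\min\!\bigl\{\log(1/\varepsilon),\ \log(1/|x|)\bigr\}^{1/2}
\]
into each ball with opposite signs; the growth bound $h(x,t)t\ge e^{Nt}$ for arbitrarily large $N$ saturates the exponential term and forces the strict inequality, after which projecting the test pair onto $\mathcal{M}$ by the fibering preserves $I < \alpha_{\lambda}$.

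Finally, I would take a minimizing sequence $(u_{n}) \subset \mathcal{M}$ and extract a weak limit $u_{n}\rightharpoonup u$ in $H^{1}_{\mathrm{rad}}(\bn)$, with strong convergence in every subcritical Orlicz space thanks to radial compactness. Theorem \ref{PS}, applied at the level $c < \alpha_{\lambda}$, rules out non-compact blow-up and gives strong $H^{1}$ convergence; the limit $u$ is then a free critical point with $I(u)=c$. The hard part will be the last step: showing that both $u^{+}$ and $u^{-}$ survive in the limit, i.e.\ that $u$ is genuinely sign-changing rather than one of its parts collapsing to zero. This requires a uniform-in-$n$ lower bound $\|u_{n}^{\pm}\|_{H^{1}}^{2} \ge \delta > 0$, derived from the Nehari identities on each $u_{n}^{\pm}$, (C1) and the sharp Moser--Trudinger inequality on $\bn$ (Theorem of Mancini--Sandeep quoted above). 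The extra hypothesis \eqref{c4} also helps here, because together with (C2) it pushes the mountain-pass energy of each half away from $0$, preventing vanishing. Once $u^{\pm}\not\equiv 0$ is established, $u$ is the required radial sign-changing solution of \eqref{main}.
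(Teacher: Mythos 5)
Your overall framework (minimize $J_{\lambda}$ over the sign-changing Nehari set $\mathcal{M}_{1}=\{u: u^{\pm}\in\mathcal{M}\}$, beat an energy threshold with Moser functions, then fight to keep both signed parts alive in the limit) is the same as the paper's, but there is a genuine error in the threshold you aim for, and your test-function construction cannot deliver it. You propose to show $c:=\inf_{\mathcal{M}_{1}}J_{\lambda}<\frac{2\pi}{\lambda}$. This cannot work: each signed part $u^{\pm}$ of an element of $\mathcal{M}_{1}$ lies in $\mathcal{M}$, so $J_{\lambda}(u)=J_{\lambda}(u^{+})+J_{\lambda}(u^{-})\geq \eta(f)^{2}$, and Theorem \ref{PS} only guarantees $\eta(f)^{2}<\frac{4\pi}{\lambda}$, not $\eta(f)^{2}<\frac{2\pi}{\lambda}$. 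The correct threshold, and the one the paper proves, is
\[
\frac{\eta_{1}(f)^{2}}{2} \;<\; \frac{\eta(f)^{2}}{2}+\frac{2\pi}{\lambda},
\]
whose whole point is that it forces \emph{each} signed part of a minimizing sequence to satisfy $J_{\lambda}(u_{k}^{\pm})\leq \frac{2\pi}{\lambda}-\frac{\epsilon}{2}$ (since the other part already costs at least $\frac{\eta(f)^{2}}{2}$); the sub-criticality is then applied to $u_{k}^{+}$ and $u_{k}^{-}$ separately, via $\|u_{k}^{\pm}\|^{2}\leq\frac{4\pi}{\lambda}-\epsilon$ and Proposition \ref{L3.4}, to rule out vanishing of either part. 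Relatedly, your competitor --- two disjoint Moser bubbles with opposite signs --- would at best give an energy near $\frac{4\pi}{\lambda}$ (one bubble per sign), which is useless for your claimed bound and also misses the correct one. The paper instead superposes a single Moser function concentrating at the origin onto the one-dimensional span $V=\{pu_{0}\}$ of the \emph{positive} solution $u_{0}$ from Theorem \ref{mt1}, and the concentration Lemma \ref{mainlemma} (where hypothesis \eqref{c4} is actually used) gives $\sup_{p,q}J_{\lambda}(pu_{0}+qm_{\epsilon,\beta})<C(V)+\frac{2\pi}{\lambda}=\frac{\eta(f)^{2}}{2}+\frac{2\pi}{\lambda}$, combined with the algebraic fact (Lemma \ref{abstract}) that some combination $pu_{0}+qm_{\epsilon,\beta}$ lies in $\mathcal{M}_{1}$.

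A secondary gap: you invoke Theorem \ref{PS} directly on the minimizing sequence, but a minimizing sequence on a Nehari-type constraint is not automatically a Palais--Smale sequence for the free functional; the paper avoids this by arguing directly with the convergence results (Propositions \ref{L3.3}, \ref{L3.4}, \ref{L3.6}), establishing $u_{0}^{\pm}\not\equiv 0$ and $\|u_{0}^{\pm}\|^{2}\leq\int_{\bn}f(x,u_{0}^{\pm})u_{0}^{\pm}\,dv_{g}$, projecting back onto $\mathcal{M}_{1}$ with factors $r_{1},r_{2}\leq 1$, and using the monotonicity from (C2) to conclude $r_{1}=r_{2}=1$, after which Lemma \ref{abstract} turns the constrained minimizer into a free critical point. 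Your instinct that the non-vanishing of $u^{\pm}$ is the hard step is right, but the mechanism is the per-component energy bound $J_{\lambda}(u_{k}^{\pm})<\frac{2\pi}{\lambda}$ together with $\eta(f)>0$, not a separate Moser--Trudinger argument.
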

\begin{rem}
 In Theorem \ref{mt2}, condition \eqref{c4} is optimal in order to get a radial sign changing solution. 
If we consider, $f(x,t) = (1 - |x|^2)^2 t e^{t^2 + |t|^{a}}, \ 0 < a \leq 1$ then by conformal invariance, \eqref{main}
 does not admit any radial sign changing solution (see \cite{ADI2}). 
\end{rem}

Once we obtain existence of radial  sign changing solution,
 we can go further to investigate their multiplicity. The main idea is the following : for a positive integer $k,$ one can divide 
$\bn$ into $k$ annuli and considering functions satisfying certain conditions on each annuli one can get existence of solution(s) 
 having k nodes. In precise, we have the following theorem :     
\begin{thm}\label{mt3}
Let $f$ be a function of critical growth, radial and satisfy the condition \eqref{c4}. Then \eqref{main}
has infinitely many radial sign changing solutions.
 
\end{thm}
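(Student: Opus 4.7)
The plan is to produce, for every integer $k\ge 1$, a radial sign changing solution of \eqref{main} with exactly $k$ interior nodes; varying $k$ then yields the required infinite family, since radial solutions with different numbers of sign changes are necessarily distinct. Fix such a $k$ and let $\Sigma_{k}=\{\mathbf{r}=(r_{1},\dots,r_{k}):0<r_{1}<\cdots<r_{k}<1\}$. For each $\mathbf{r}\in\Sigma_{k}$ set $r_{0}=0$, $r_{k+1}=1$ and split $\bn$ into the $k+1$ radial hyperbolic annuli $A_{j}(\mathbf{r})=\{x\in\bn:r_{j-1}<|x|<r_{j}\}$. On each $A_{j}(\mathbf{r})$ I would apply a Mountain Pass argument to the restricted functional
\[
I_{j}(w)=\frac{1}{2}\int_{A_{j}}|\nabla_{g_{\bn}}w|^{2}\,dv_{g_{\bn}}-\int_{A_{j}}F(x,w)\,dv_{g_{\bn}}
\]
on $H^{1}_{0,\mathrm{rad}}(A_{j})$, obtaining a positive radial solution $w_{j}^{\mathbf{r}}$ of the Dirichlet problem on $A_{j}$ together with a Mountain Pass level $c_{j}(\mathbf{r})$. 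The Mountain Pass geometry is immediate from (C1) and (C4) in Definition \ref{1}, and the Palais--Smale condition on each bounded annulus follows by localising Theorem \ref{PS}. The decisive ingredient, already used in Theorem \ref{mt2}, is the exponential lower bound \eqref{c4}: it permits the construction of a Moser-type test function concentrated inside $A_{j}(\mathbf{r})$ whose energy lies strictly below the first non-compactness threshold of the Moser--Trudinger inequality, placing $c_{j}(\mathbf{r})$ in the range where Palais--Smale applies.

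Next, glue these pieces into the radial function
\[
u_{\mathbf{r}}=\sum_{j=1}^{k+1}(-1)^{j-1}\,w_{j}^{\mathbf{r}}\,\chi_{A_{j}(\mathbf{r})},
\]
which is continuous on $\bn$, sign changing with exactly $k$ interior nodes at $r_{1},\dots,r_{k}$, and a classical solution of \eqref{main} inside each $A_{j}(\mathbf{r})$. To upgrade $u_{\mathbf{r}}$ to a weak solution on the whole disc, one needs the inner normal derivatives of $w_{j}^{\mathbf{r}}$ and $w_{j+1}^{\mathbf{r}}$ to agree in absolute value at every $r_{j}$. The classical way to enforce these $k$ matching conditions is to consider the total energy $J(\mathbf{r})=\sum_{j=1}^{k+1}c_{j}(\mathbf{r})$ as a function on $\Sigma_{k}$ and to pick an interior minimiser $\mathbf{r}^{*}$: a standard first variation with respect to $r_{j}$ shows that stationarity of $J$ at $\mathbf{r}^{*}$ is equivalent to the matching of one-sided normal derivatives at each nodal sphere. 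Elliptic regularity then promotes $u_{\mathbf{r}^{*}}$ to a classical solution with exactly $k$ nodes, and letting $k$ vary completes the proof.

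The hard part will be the behaviour of $J$ near $\partial\Sigma_{k}$. As two consecutive $r_{j}$'s collapse, the first Dirichlet eigenvalue of the corresponding annulus diverges and one expects $c_{j}(\mathbf{r})\to +\infty$, but this requires a quantitative lower bound that survives the exponential nonlinearity and the competition coming from (C3). When $r_{k}\to 1$, the outermost annulus absorbs a non-compact region of $\bn$ and the control on $c_{k+1}(\mathbf{r})$ has to come from (C3) together with the Moser--Trudinger inequality recalled in the introduction. One further needs the Mountain Pass levels $c_{j}(\mathbf{r})$ to sit uniformly below the Palais--Smale threshold of Theorem \ref{PS} on compact subsets of $\Sigma_{k}$, so that the minimisation of $J$ takes place in a regime where solutions persist; once these uniform estimates are in place the minimiser is forced into the interior of $\Sigma_{k}$ and yields the desired solution.
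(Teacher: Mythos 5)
Your overall architecture --- decompose $\bn$ into $k+1$ radial annuli, solve a one-signed problem on each piece, and optimise the total energy over the partition points --- is exactly the Nehari/Adimurthi--Yadava scheme that the paper invokes (it cites Theorem 1.3 of \cite{ADI1} and Lemma 3.1 of \cite{ZN} and omits all details), so the skeleton is right. There is, however, a genuine gap at the step where you force the minimiser $\mathbf{r}^{*}$ into the interior of $\Sigma_{k}$. Your stated mechanism is that $J(\mathbf{r})\to+\infty$ as the partition degenerates, via divergence of the first Dirichlet eigenvalue of a collapsing annulus. This works for the outer pieces: by the radial estimate \eqref{radialestimate}, radial functions supported in $\{r_{j-1}<|x|<r_{j}\}$ with $r_{j-1}\ge\delta>0$ satisfy $\|u\|_{\infty}\le C(\delta)\|u\|$, so those pieces are effectively subcritical and their levels do blow up as the annuli shrink (and likewise as $r_{k}\to1$). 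It fails for the degeneration $r_{1}\to0$: on the shrinking innermost ball the problem remains genuinely critical, the mountain-pass/Nehari level is capped above by the non-compactness threshold $\frac{2\pi}{\la}$ for every $r_{1}>0$ (the argument of Lemma \ref{L3.2} and Step 2 of Theorem \ref{PS}(ii) is domain-independent), and hence $J(\mathbf{r})$ stays bounded as $r_{1}\to0$. Excluding this degeneration is the actual crux of the theorem: it is done not by coercivity but by the strict inductive level inequality $\eta_{k}(f)^{2}<\eta_{k-1}(f)^{2}+\frac{4\pi}{\la}$, obtained by applying the concentration Lemma \ref{mainlemma} with $V$ spanned by a $(k-1)$-nodal minimiser and adding a Moser bump at the origin --- this is precisely where hypothesis \eqref{c4} enters (compare Claim 1 in the proof of Theorem \ref{mt2}) --- combined with the quantisation fact behind Proposition \ref{L3.4} that a piece concentrating at the origin must carry at least $\frac{4\pi}{\la}$ of $\|\cdot\|^{2}$, i.e.\ at least $\frac{2\pi}{\la}$ of energy. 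Your proposal never sets up this comparison between $\eta_{k}$ and $\eta_{k-1}$, and without it the minimisation over $\Sigma_{k}$ can escape through $\{r_{1}=0\}$.

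Two smaller points. The ``standard first variation'' of $\mathbf{r}\mapsto\sum_{j}c_{j}(\mathbf{r})$ presumes differentiability of the per-annulus levels, which is not automatic when the minimisers are non-unique; the safer route, and the one underlying the cited references, is to minimise $J_{\la}$ directly over the nodal Nehari set $\{u:\ u\ \mbox{has nodes}\ r_{1}<\cdots<r_{k},\ u\chi_{A_{j}}\in\mathcal{M}(A_{j})\ \mbox{for all}\ j\}$ and to conclude $J_{\la}'(u^{*})=0$ by the deformation argument of Lemma \ref{abstract}(1). Also, your remark that a Moser-type test function can be concentrated ``inside $A_{j}(\mathbf{r})$'' for every $j$ is off-target in the radial hyperbolic setting: by \eqref{radialestimate} concentration can occur only at the origin, so the level estimate is needed only for the innermost piece, while the outer annuli satisfy Palais--Smale at every level.
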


\begin{rem}
 Theorem \ref{mt3} gives an affirmative answer to the question of existence of infinitely many sign changing radial solutions for 
the problem \eqref{main1} in dimension two. 
\end{rem}

We also give an existence of non radial solution to the above problem. Please see section 7, Theorem \ref{main theorem for nonradial} (Appendix 2) for the discussion and the proof of existence of  non radial solution. \\

The paper is organized as follows. 
We divide the article into seven sections. Sections 2 and 3 discuss the preliminaries and some technical frameworks.
 Section 4 is devoted to the Palais-Smale (P-S) condition and several convergence results.
 The results of Section 4 are used to prove the main existence Theorems
 \ref{mt1}, \ref{mt2}, \ref{mt3} in Section 5. In section 6 we give a sketch of the proof of Lemma \ref{mainlemma} as 
  Appendix 1.  The last section (Appendix 2) is devoted to the existence of non radial solutions.

 \section{Notations and Functional Analytic Preliminaries}

 In this section we will introduce some of the notations and definitions used in this
 paper and also recall some of the embeddings
  related to the Sobolev space in the hyperbolic space. We also obtain estimates for radial functions. \\

We will denote by $\bn$ the disc model of the hyperbolic space, i.e., the unit disc
 equipped with 
 the Riemannian metric $g_{\bn} := \sum\limits_{i=1}^2 \left(\frac{2}{1-|x|^2}\right)^2dx_i^2$. To simplify our notations we will denote $g_{\bn}$
by $g$.\\
 The corresponding volume element is given by $dv_{g} = \big(\frac{2}{1-|x|^2}\big)^2 dx, $ where $dx$ denotes the Lebesgue 
measure on $\rn$.  
 The hyperbolic gradient $\nabla_{g}$ and the hyperbolic Laplacian $\De_{g}$ are
 given by
 \begin{align*}
  \nabla_{g}=\left(\frac{1-|x|^2}{2}\right)^2\nabla,\ \ \ 
 \De_{g}=\left(\frac{1-|x|^2}{2}\right)^2\De \  .
 \end{align*}
{\bf Sobolev Space :} We will denote by ${H^{1}}(\bn)$ the Sobolev space on the disc
 model of the hyperbolic space $\bn.$  \\
Throughout this paper we will denote the norm of $H^{1}(\bn)$ by $||u|| 
: = \left( \int_{\bn} |\nabla_{g} u|^2 \ dv_{g} \right)^{\frac{1}{2}}.$\\
{\bf A sharp Poincar\'{e}-Sobolev inequality :}(see \cite{MS})\\

 For $N \geq 3$ and $p \in \left(1, \frac{N+2}{N-2}      \right]$ there exists an optimal constant 
$S_{N,p} > 0$ such that
\begin{equation}\label{p}
 S_{N,p} \left( \int_{\mathbb{B}^{N}} |u|^{p + 1} d v_{\mathbb{B}^{N}} \right)^{\frac{2}{p + 1}} 
\leq \int_{\mathbb{B}^N} \left[|\nabla_{\mathbb{B}^{N}} u|^{2}
 - \frac{(N-1)^2}{4} u^{2}\right] dv_{\mathbb{B}^{N}},
\end{equation}
for every $u \in C^{\infty}_{0}(\mathbb{B}^{N}).$ If $ N = 2$ any $p > 1$ is allowed.

A basic information is that the bottom of the spectrum of $- \Delta_{g}$ on $\bn$ is 
\begin{equation}\label{firsteigen}
  \frac{1}{4} = \inf_{u \in H^{1}(\bn)\setminus \{ 0 \}} 
\frac{\int_{\bn}|\nabla_{g} u|^2 dv_{g} }{\int_{\bn} |u|^2 dv_{g}}. 
\end{equation}

Also, from the conformal invariance we have,
\begin{lem}\label{1l1}
 If $u \in H^{1}(\bn),$ then 

\begin{equation}\label{conformalgradient}
\int_{\bn} |\nabla_{g} u|^2 dv_{g} = \int_{\bn} |\nabla u|^2 dx, 
\end{equation}
where $\nabla$ denotes the Euclidean gradient on $\rn.$ 
\end{lem}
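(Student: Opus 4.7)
The plan is straightforward: this is the classical two-dimensional conformal invariance of the Dirichlet integral, and I would prove it by a direct computation exploiting the explicit form of the Poincar\'e metric.

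First, I would fix the pointwise identity. Writing $g = \rho^2 g_e$ with conformal factor $\rho(x) = \frac{2}{1-|x|^2}$, so $g_{ij} = \rho^2 \delta_{ij}$ and $g^{ij} = \rho^{-2}\delta^{ij}$. Using the formula $\nabla_g u = \bigl(\frac{1-|x|^2}{2}\bigr)^2 \nabla u = \rho^{-2}\nabla u$ already recorded in the paper, the norm in the metric $g$ satisfies
\[
|\nabla_g u|_g^2 \;=\; g_{ij}(\nabla_g u)^i(\nabla_g u)^j \;=\; \rho^2 \cdot \rho^{-4}|\nabla u|^2 \;=\; \rho^{-2}|\nabla u|^2.
\]
Meanwhile, the volume element in dimension two is $dv_g = \sqrt{\det g}\,dx = \rho^2\,dx$. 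Multiplying,
\[
|\nabla_g u|_g^2\,dv_g \;=\; \rho^{-2}|\nabla u|^2 \cdot \rho^2\,dx \;=\; |\nabla u|^2\,dx,
\]
which is exactly the pointwise identity underlying the lemma. The crucial feature is that the conformal factors cancel because we are in dimension two.

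Having established the pointwise identity, the equality \eqref{conformalgradient} follows by integration for every $u \in C_0^\infty(\bn)$. To pass to general $u \in H^1(\bn)$, I would invoke density: by definition $H^1(\bn)$ is the completion of $C_0^\infty(\bn)$ in the norm $\|u\| = (\int_{\bn}|\nabla_g u|^2\,dv_g)^{1/2}$, and the identity shows this norm coincides with the Euclidean Dirichlet seminorm on test functions, so passing to the limit yields the result for all $u \in H^1(\bn)$.

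The main obstacle, such as it is, is not a mathematical one but simply keeping track of index placements so that the factor $\rho^{-2}$ from the inverse metric combines correctly with the factor $\rho^2$ from the volume element; once those conventions are nailed down, the identity is immediate. There is no compactness or functional-analytic difficulty here, since the conformal invariance is a pointwise statement.
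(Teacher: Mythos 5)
Your computation is exactly the paper's proof: the pointwise cancellation $|\nabla_g u|_g^2\,dv_g = \rho^{-2}|\nabla u|^2\cdot\rho^2\,dx = |\nabla u|^2\,dx$ in local coordinates, which is all the paper writes down. The added density remark for general $u\in H^1(\bn)$ is a harmless (and correct) extra sentence, so the proposal is correct and essentially identical in approach.
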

\begin{proof}
In local Coordinates we have
\begin{align}
 \int_{\bn} |\nabla_{g} u|^2 dv_{g} &= \int_{\bn} 
\left( \frac{1 - |x|^2}{2} \right)^2 |\nabla u|^2 \left( \frac{2}{1 - |x|^2} \right)^2 dx \\ \notag
& = \int_{\bn} |\nabla u|^2 dx.
\end{align}
\end{proof}

%

Let $H^{1}_{R}(\bn)$ denotes the subspace 
\[
 H^{1}_{R}(\bn) := \{ u \in H^{1}_{R}(\bn) : u \ \mbox{is radial} \ \}.
\]
Since the hyperbolic sphere with centre $0 \in \bn$ is also a Euclidean sphere with centre $0 \in \bn$ (see \cite{JR}),
$H^{1}_{R}(\bn)$ can also be seen as the subspace consisting of hyperbolic radial functions.

\begin{prop}\label{eqre}
 Let $u \in H^{1}_{R}(\bn),$ then 
\begin{equation}\label{radialestimate}
|u(x)| \leq \frac{||u||}{ (4 \pi)^{\frac{1}{2}}} \frac{(1- |x|^2)^{\frac{1}{2}}}{|x|^{\frac{1}{2}}}. 
\end{equation}
\end{prop}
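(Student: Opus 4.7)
The plan is to start from the conformal invariance identity \eqref{conformalgradient} so that the $H^1(\bn)$ norm reduces to a Euclidean Dirichlet integral, then express it in polar coordinates for a radial function, and finally combine a Cauchy--Schwarz argument with an elementary one-variable inequality.

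More concretely, by Lemma \ref{1l1}, for $u \in H^1_R(\bn)$ we have
\[
\|u\|^2 \;=\; \int_{\bn} |\nabla u|^2 \, dx \;=\; 2\pi \int_{0}^{1} (u'(s))^2 \, s \, ds,
\]
writing $u(x) = u(|x|)$ and passing to polar coordinates in $\rn$. First I would prove the claim on the dense subspace $C^{\infty}_{0,R}(\bn)$ of radial smooth compactly supported functions; for such $u$ one has $u(s) = 0$ for $s$ close to $1$, hence $u(r) = -\int_{r}^{1} u'(s)\, ds$ for every $r \in (0,1)$.

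Next I would apply the Cauchy--Schwarz inequality with the weight naturally dictated by the $2$-dimensional polar Jacobian: writing $|u'(s)| = \bigl(|u'(s)| s^{1/2}\bigr) \cdot s^{-1/2}$ gives
\[
|u(r)|^2 \;\leq\; \Big(\int_{r}^{1} (u'(s))^2 s \, ds \Big) \Big(\int_{r}^{1} \frac{ds}{s}\Big)
\;\leq\; \frac{\|u\|^2}{2\pi}\, \log\!\big(1/r\big).
\]
The remaining step is the elementary inequality
\[
\log\!\Big(\frac{1}{r}\Big) \;\leq\; \frac{1-r^2}{2r}, \qquad r \in (0,1],
\]
which follows from the fact that the function $\varphi(r) := \tfrac{1-r^2}{2r} - \log(1/r) = \tfrac{1}{2r} - \tfrac{r}{2} + \log r$ satisfies $\varphi(1)=0$ and $\varphi'(r) = -\tfrac{(1-r)^2}{2r^2} \leq 0$, so $\varphi \geq 0$ on $(0,1]$. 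Combining the two displays yields
\[
|u(r)|^2 \;\leq\; \frac{\|u\|^2}{4\pi} \cdot \frac{1-r^2}{r},
\]
which is the desired pointwise bound.

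Finally, to pass from $C^{\infty}_{0,R}(\bn)$ to all of $H^1_R(\bn)$, I would invoke density of radial compactly supported smooth functions in $H^1_R(\bn)$, pick an approximating sequence converging in norm, extract an a.e.\ convergent subsequence, and let the estimate pass to the limit at every $x \neq 0$. There is no real obstacle: the only subtlety is choosing the correct weight $s$ in the Cauchy--Schwarz step and noticing the simple calculus inequality that converts the logarithmic bound into the sharper-at-the-boundary form $\tfrac{(1-|x|^2)^{1/2}}{|x|^{1/2}}$ appearing in the statement.
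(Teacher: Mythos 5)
Your proof is correct and is essentially the paper's argument transplanted from hyperbolic to Euclidean polar coordinates: the paper writes $\|u\|^2=\omega_2\int_0^\infty \sinh t\,|u'(t)|^2\,dt$, applies the same Cauchy--Schwarz step, and uses $\int_t^\infty \frac{ds}{\sinh s}\le \frac{1}{\sinh t}$ together with $\sinh t=\frac{2|x|}{1-|x|^2}$, which under $r=\tanh(t/2)$ is exactly your inequality $\log(1/r)\le\frac{1-r^2}{2r}$. Your version has the minor merit of making that elementary inequality explicit (the paper asserts the corresponding bound without proof) and of recovering the statement directly from the standard $\sqrt{\log(1/r)}$ estimate mentioned in the paper's own remark.
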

\begin{proof}
 Since $u \in H^{1}_{R}(\bn),$ then $u(x) = u(|x|),$ by denoting the radial function by $u$ itself. For u radial,
  in hyperbolic polar co-ordinates $|x| = \tanh \frac{t}{2},$ we have  \\
\[
 \int_{\bn} |\nabla_{g} u|^2 \ dv_{g} = \omega_{2} \int_{0}^{\infty} \sinh t |u^{\prime}(t)|^2 dt  < \infty. 
\]
Thus for $u \in H^{1}_{R}(\bn)$ and $t < \tau,$

\begin{align}
 |u(\tau) - u(t)| & = \left| \int_{t}^{\tau} u^{\prime}(s) ds \right | 
\leq \left ( \int_{0}^{\infty} (\sinh s) |u^{\prime}(s)|^2 ds\right)^{\frac{1}{2}} \times 
\left ( \int_{t}^{\infty} \frac{ds}{\sinh s} \right)^{\frac{1}{2}} \notag \\
& \leq  ||u||_{H^{1}} \left ( \frac{1}{ 2 \pi \sinh t} \right)^{\frac{1}{2}}.
\end{align}
Since $\int_{\bn} u^2 dv_{g} = \omega_{2} \int_{0}^{\infty} u^2 \sinh t dt < \infty,$ 
this implies $\liminf_{\tau \rightarrow  \infty } u(\tau) = 0,$ we get,

\begin{equation}\label{erd1}
 |u(t)| \leq ||u||_{H^{1}} \left( \frac{1}{2 \pi \sinh t} \right)^{\frac{1}{2}}.
\end{equation}
Now substituting $t = 2 \tanh^{-1}(|x|),$
\begin{align}\label{rdh}
 \sinh t & = \frac{e^t - e^{-t}}{2} = \frac{e^{2 \tanh^{-1}(|x|)} - e^{-2 \tanh^{-1}(|x|) }}{2} \notag \\ 
& = \frac{e^{2 \log \left( \frac{1 + |x|}{1 - |x|}\right)} - 1}{ 2 e^{ \log \left( \frac{1 + |x|}{1 - |x|}\right)}} = 
\frac{ 2 |x|}{ (1 - |x|^2)},
\end{align}
hence substituting \eqref{rdh} in \eqref{erd1} we get
\[
 |u(x)| \leq \frac{||u||}{ (4 \pi)^{\frac{1}{2}}} \frac{(1- |x|^2)^{\frac{1}{2}}}{|x|^{\frac{1}{2}}}.
\]
This completes the proof of the proposition.
\end{proof}
 
\begin{rem}
The above proposition is redundant. Instead one can use the standard estimate 
$|u(r)| \leq \frac{1}{(2 \pi)^{1/2}} \sqrt{\log{\frac{1}{r}}}  ||\nabla u||_{2}$ on the ball (See \cite{TINU}) which is sharper  than \eqref{radialestimate} as $r := |x| \rightarrow 1.$  However for the sake notational brevity we use estimate \eqref{radialestimate} and also it does not weakens the results we obtain in this article.  
\end{rem} 

 {\bf Compactness Lemma :} 
%

%
%

Next we shall prove the  compactness lemma of P.L.Lions \cite{PLL} in the hyperbolic setting. 
The main ingredient of the proof is using a suitable covering  of hyperbolic space  with M\"obius transformation developed 
by Adimurthi-Tintarev in their paper \cite{ADIT}.  Adopting their approach we prove the following.  

\begin{lem} [Hyperbolic version of P.L.Lions Lemma] \label{L3.5}
 Let $\{u_k : ||u_k|| = 1\}$ be a sequence in $H^{1}(\bn)$ converging weakly to a non-zero function $u.$ Then
 for every $p < \left(1 - ||u||^2 \right)^{-1},$
 \begin{align} \label{hyperbolic PLL}
  \sup_k \int_{\bn} (e^{4\pi p u^2_k} - 1) \ dv_{g} < \infty.
 \end{align}

\end{lem}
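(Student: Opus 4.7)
The strategy is to adapt the classical argument of P.L.~Lions to the hyperbolic setting, using the Mancini-Sandeep hyperbolic Moser-Trudinger inequality in place of its Euclidean counterpart. Set $w_k := u_k - u$; then $w_k \rightharpoonup 0$ in $H^{1}(\bn)$, and the unit-norm hypothesis combined with weak orthogonality gives $\|w_k\|^2 = 1 - \|u\|^2 + o(1)$. For any $\nu > 0$ the elementary inequality $(u + w_k)^2 \leq (1 + \nu) w_k^2 + (1 + 1/\nu) u^2$ yields, with $A_k := 4 \pi p (1 + \nu) w_k^2$ and $B := 4 \pi p (1 + 1/\nu) u^2$,
\[
 e^{4\pi p u_k^2} - 1 \ \leq \ (e^{A_k} - 1) + (e^{B} - 1) + (e^{A_k} - 1)(e^{B} - 1),
\]
so it suffices to bound each of these three integrals uniformly in $k$.

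Denote $S := \sup_{\|v\| \leq 1} \int_{\bn} (e^{4\pi v^2} - 1)\, dv_g$, which is finite by the theorem of Mancini-Sandeep recalled in the introduction. Applied to $v = \sqrt{p(1+\nu)}\, w_k$, whose norm is strictly less than $1$ for large $k$ as soon as $\nu > 0$ is chosen so that $p(1 + \nu)(1 - \|u\|^2) < 1$, this controls $\int_{\bn} (e^{A_k} - 1)\, dv_g$ uniformly. For the cross term, apply H\"older's inequality with conjugate exponents $q, q'$ both close to $1$, together with the pointwise bound $(e^x - 1)^r \leq e^{rx} - 1$ valid for $r \geq 1$, $x \geq 0$; the cross term is then bounded by
\[
 \left(\int_{\bn} (e^{qA_k} - 1)\, dv_g\right)^{1/q} \left(\int_{\bn} (e^{q'B} - 1)\, dv_g\right)^{1/q'},
\]
with the first factor controlled again by M-T provided $q\,p(1 + \nu)(1 - \|u\|^2) < 1$.

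The remaining factors involving $B$ depend only on the fixed limit $u$, and their finiteness reduces to the auxiliary claim that $\int_{\bn}(e^{\alpha v^2} - 1)\, dv_g < \infty$ for every $v \in H^{1}(\bn)$ and every $\alpha > 0$. This follows by choosing $\phi \in C_{0}^{\infty}(\bn)$ with $2\alpha \|v - \phi\|^2 < 4\pi$, splitting $\alpha v^2 \leq 2\alpha \phi^2 + 2\alpha (v - \phi)^2$, and using boundedness and compact support of $\phi$ for one factor while invoking M-T on the tail $(v - \phi)$. The main obstacle is the simultaneous choice of small $\nu$ and $q - 1$, which is feasible precisely because the hypothesis $p(1 - \|u\|^2) < 1$ is strict. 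This argument runs parallel to the Adimurthi-Tintarev M\"obius-covering strategy referenced in the excerpt, which instead localizes $\bn$ by hyperbolic balls of bounded overlap, uses the conformal invariance from Lemma \ref{1l1} to transport each ball to a fixed Euclidean disc, and invokes the classical Lions' lemma on each; both approaches are ultimately powered by the same Moser-Trudinger input.
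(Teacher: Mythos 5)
Your proof is correct, and it takes a genuinely different route from the paper's. The paper follows the Adimurthi--Tintarev covering strategy: it covers $\bn$ by M\"obius images $\phi_i(U)$ of a fixed relatively compact set with bounded multiplicity, shows that the set $S_k$ of indices where the local norm of $u_k$ exceeds a threshold has cardinality bounded independently of $k$, and then handles the two families of pieces separately --- a local exponential-integrability estimate on the ``good'' pieces, and transport by the norm-preserving M\"obius maps followed by the Euclidean Lions lemma on the finitely many ``bad'' ones. You instead run the classical global Lions argument directly on $\bn$: write $u_k = u + w_k$ with $\|w_k\|^2 \to 1 - \|u\|^2$, apply Young's inequality $u_k^2 \leq (1+\nu)w_k^2 + (1+1/\nu)u^2$, expand $e^{a+b}-1$, and control the $w_k$-terms by the sharp global Mancini--Sandeep inequality \eqref{mtr} (which you should note extends from $C_0^\infty(\bn)$ to the unit ball of $H^1(\bn)$ by density and Fatou) and the $u$-terms by the standard fact that $e^{\alpha u^2}-1 \in L^1(\bn, dv_g)$ for any fixed $u \in H^1(\bn)$ and $\alpha>0$, which your approximation argument establishes correctly. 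The strict inequality $p(1-\|u\|^2)<1$ is exactly what gives room to pick $\nu$ small and the H\"older exponent $q$ close to $1$ so that $qp(1+\nu)\|w_k\|^2 < 1$ for large $k$ (the finitely many remaining $k$ being covered by the auxiliary integrability claim). Your argument is more self-contained --- it needs only the single global sharp inequality on $\bn$, not the covering machinery or the Euclidean Lions lemma --- whereas the paper's localization is the version that survives in settings where no global sharp Moser--Trudinger inequality is available; both are ultimately powered by the same input and both are valid.
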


\begin{proof}
 Let us fix an open set $U$ in $\bn$ such that $\overline{U} \subset \bn$ and define :
 \begin{align}
  ||u||_U^2 = \int_U |\nabla u|^2 \ dx + \int_U u^2 \left(\frac{2}{1 - |x|^2}\right)^2 \ dx.
 \end{align}
Then following \cite{ADIT}, we can conclude that , there exists a number $q>0$ such that for all 
$u \in H^1(\bn)$ with $||u||_U < 1,$ there holds
\begin{align}
 \int_U (e^{qu^2} - 1) \ dv_g \leq C \frac{||u||^2_U}{1 - ||u||^2_U}.
\end{align}
Let us fix a $u_k.$
Let $\{\phi_i\}_i$ be a countable family of M\"{o}bius transforms such that $\{\phi_i(U)\}_i$ covers $\bn,$
having finite multiplicity, say $R_0.$ Then define,

\begin{align}
S_k := \left\{i : ||u_k \circ \phi_i||^2_U > \frac{q}{8 \pi p }\right\}. 
\end{align}
Proceeding as in \cite{ADIT} we can show that number of elements in $S_k$ is less than 
$\frac{40 \pi p R_0}{q} + 1,$ and
\begin{align} \label{3.24}
 \sum_{i \notin S_k } \int_{\phi_i(U)} (e^{4\pi p u^2_k} - 1) \ dv_g \leq C,
\end{align}
where $C$ is independent of $u_k.$ Whereas,
\begin{align} \label{3.25}
 \sum_{i \in S_k} \int_{\phi_i(U)} (e^{4\pi p u^2_k} - 1) \ dv_g
 &\leq C \sum_{i \in S_k} \int_{\bn} (e^{4\pi p (u \circ \phi_i)^2_k} - 1) \notag \\
 &\leq C\left(\frac{40 \pi p R_0}{q} + 1 \right),
\end{align}
 by $||v \circ \phi_i|| = ||v|| $ for all $v \in H^1(\bn),$ and the Euclidean version of P.L.Lions lemma \cite{PLL}.
Therefore from \eqref{3.24} and \eqref{3.25} we get  \eqref{hyperbolic PLL}.
\end{proof}

Finally we end this section with some examples of functions having critical growth and definition of Moser functions.\\

{\bf{Examples of functions of critical growth:}} \\

(i) $f(x,t) = t(e^{\la t^2} - 1),$ is an example of function of critical growth. This example suggests we can allow the singularity at the boundary of the ball  of order $\frac{1}{(1 - |x|^2)^{2}}.$

(ii) Let $h(x,t) \in C^1(\bn \times (0,\infty))$ be a positive function satisfying (C1), (C4) and
\[
 h^{\prime}(x,t) \geq \frac{h(x,t)}{t},
\]
then $f(x,t) = h(x,t)(e^{\la t^2} - 1)$ is a function of critical growth. \\
\emph{proof.} One can easily show that $f^{\prime}(x,t) > \frac{f(x,t)}{t}.$ It remains to show that $f$ satisfies (C3). \\
For $t \leq \frac{1}{\sqrt{\la}},$ we have from the definition of $F(x,t) :$
\begin{align*}
 F(x,t) = \int^t_0 f(x,s) \ ds \leq tf(x,t) \leq \frac{1}{\sqrt{\la}} f(x,t).
\end{align*}
 For $t > \frac{1}{\sqrt{\la}},$  we have :
\begin{align*}
 F(x,t)  &= \int^t_{0} h(x,s)(e^{\la s^2} - 1) ds \\
                        &= \frac{1}{2\la}\int^t_{0} \frac{h(x,s)}{s} \frac{d}{ds} (e^{\la s^2} - \la s^2) ds \\
   &= \frac{1}{2\la}\int^t_{0} \frac{1}{s} \left[\frac{h(x,s)}{s} - h^{\prime}(x,s)\right] (e^{\la s^2} - \la s^2) \ ds \\
   & + \frac{1}{2\la}\frac{h(x,t)}{t}(e^{\la t^2} - \la t^2) .
\end{align*}
Therefore using $h^{\prime}(x,t) \geq \frac{h(x,t)}{t},$ we get
\[
 F(x,t) \leq C f(x,t).
\]

This proves $f$ satisfies (C3).\\

{\bf{Definition of Moser Function :}}\\

For $0 < l < R_0 < 1, \ m_{l,R_{0}}(x)$ be the Moser function defined by
 \begin{align*}
  m_{l,R_{0}}(x) = \frac{1}{\sqrt{2 \pi}}
  \begin{cases}
   \left( \log \frac{R_0}{l}\right)^{\frac{1}{2}} \ \mbox{if} \ 0 < |x| < l,  \\
   \frac{\log \frac{R}{|x|}}{( \log \frac{R_0}{l})^{\frac{1}{2}}} \ \ \ \mbox{if} \ l < |x| < R_0,   \\
   0 \ \ \ \ \ \ \ \ \ \ \ \ \mbox{Otherwise},
  \end{cases}
\end{align*}
then $\int_{\bn} |\nabla_{g} m_{l,R_{0}}|^2_{\bn} \ dv_{g} = \int_{\bn} |\nabla m_{l,R_{0}}|^2 \ dx = 1.$


\section{Variational Framework}
We use variational methods in order to prove the main theorems. 
Taking advantage of the Moser-Trudinger inequality and radial estimate \eqref{radialestimate}
 we shall derive a variational principle for \eqref{main} in the sobolev space
$H^{1}_{R}(\bn).$
 The solutions of \eqref{main} are the critical points of the energy functional given by
 
\begin{equation}\label{JF}
J_{\la}(u) = \frac{1}{2} \int_{\bn} |\nabla_{g} u|^2 \ dv_{g} - \int_{\bn} F(x,u) \ dv_{g}.
 \end{equation}

Indeed by Proposition \ref{est1} and Lemma \ref{L3.1}, $J_{\lambda}$ is a well defined $C^1$ functional on $H^{1}_{R}(\bn).$  
Assuming $f$ to be radial in its first variable, it is enough to find critical points of $J_{\la}$ on $H^{1}_{R}(\bn)$
by the principle  of symmetric criticality \cite{PR}.  Hence from now onwards we shall denote $f(x,t) := g(|x|,t)$
by $f$ itself. 

\begin{prop}\label{est1}
If $u \in H^{1}_{R}(\bn),$ then

\begin{equation}
 \int_{\bn} F(x,u) dv_{g} < \infty.
\end{equation}
\end{prop}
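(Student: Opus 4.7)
The plan is to invoke (C3) to reduce the problem to controlling $\int_{\bn} f(x,u)\,dv_{g}$, and then split $\bn$ based on the pointwise size of $|u|$. Since (C3) gives $F(x,u)\leq M_{1}(g(x)+f(x,u))$ with $g\in L^{1}(\bn, dv_{g})$, it suffices to prove $\int_{\bn} f(x,u)\,dv_{g}<\infty$.

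On $\{|u|\leq 1\}$, (C1) yields $h(x,t)\leq C|t|^{a}$ near $t=0$ and $e^{\la t^{2}}-1\leq C\la t^{2}$ for $|t|\leq 1$, so $f(x,u)\leq C|u|^{a+2}\leq C u^{2}$. Integrability of this contribution then follows from the spectral identity \eqref{firsteigen}, which ensures $u\in L^{2}(\bn, dv_{g})$.

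On $\{|u|>1\}$, the radial estimate \eqref{radialestimate} forces this set to lie in a Euclidean ball $B_{r_{0}}(0)$ with $r_{0}=r_{0}(||u||)<1$, which is bounded away from $\partial\bn$; thus $dv_{g}$ is comparable to Lebesgue measure on $B_{r_{0}}$ and this ball has finite hyperbolic volume. For any $\epsilon>0$, condition (C4) supplies $T_{\epsilon}$ with $h(x,t)\leq e^{\epsilon t^{2}}$ for $|t|>T_{\epsilon}$, so $f(x,u)\leq e^{(\la+\epsilon)u^{2}}$ on $\{|u|>T_{\epsilon}\}$; the intermediate set $\{1\leq |u|\leq T_{\epsilon}\}\subset B_{r_{0}}$ contributes a bounded constant times a finite volume, again by (C1).

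The main obstacle is therefore to show $\int_{B_{r_{0}}}e^{(\la+\epsilon)u^{2}}\,dv_{g}<\infty$, since the hyperbolic Moser--Trudinger inequality only bounds $\int_{\bn}(e^{\alpha u^{2}}-1)\,dv_{g}$ when $\alpha\leq 4\pi/||u||^{2}$, whereas $\la+\epsilon$ is fixed a priori and $||u||$ may be large. I would resolve this by truncation: let $u_{M}=(u\wedge M)\vee(-M)$; then $\nabla(u-u_{M})=\nabla u\cdot\chi_{\{|u|>M\}}$ and dominated convergence yields $||u-u_{M}||\to 0$ as $M\to\infty$. Picking $M$ large enough that $2(\la+\epsilon)||u-u_{M}||^{2}\leq 4\pi$, the pointwise bound $u^{2}\leq 2M^{2}+2(u-u_{M})^{2}$ gives
\[
 e^{(\la+\epsilon)u^{2}}\leq e^{2(\la+\epsilon)M^{2}}\,e^{2(\la+\epsilon)(u-u_{M})^{2}}.
\]
The Moser--Trudinger inequality of \cite{MS1} applied to $u-u_{M}$ (extended from $C_{0}^{\infty}(\bn)$ by density) then bounds $\int_{\bn}(e^{2(\la+\epsilon)(u-u_{M})^{2}}-1)\,dv_{g}$, and restricting to $B_{r_{0}}$ (of finite hyperbolic volume) yields the required estimate.
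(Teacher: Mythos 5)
Your proof is correct, but it is organized differently from the paper's. The paper reduces $F(x,u)$ to $\tfrac12 u f(x,u)$ via (C2) (rather than via (C3) as you do) and then splits the integral according to $|x|$: on the annulus $\{|x|>\tfrac12\}$ the radial estimate \eqref{radialestimate} makes $u$ small, so $h(x,u)$ is bounded by (C1) and $e^{\la u^2}-1\leq C(1-|x|^2)$, giving an integrable power of $(1-|x|^2)$; on the inner ball it simply invokes the Euclidean Moser--Trudinger inequality. You instead split according to $|u|$, handling $\{|u|\leq 1\}$ by (C1) together with the spectral gap \eqref{firsteigen}, and observing that the radial estimate confines $\{|u|>1\}$ to a ball compactly contained in $\bn$. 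The two decompositions are essentially dual to one another, but your treatment of the core region is more careful than the paper's: the one-line appeal to Moser--Trudinger in the paper silently uses the standard fact that $e^{\alpha u^2}$ is locally integrable for a \emph{fixed} $H^1$ function of arbitrary norm (not just for norm at most one), and your truncation $u_M$ with $\|u-u_M\|\to 0$ and the splitting $u^2\leq 2M^2+2(u-u_M)^2$ is precisely the standard proof of that fact, made explicit. So your argument buys a self-contained justification of the step the paper glosses over, at the cost of being longer; the paper's version is shorter because the boundary region is disposed of by a purely pointwise bound without ever needing (C4).
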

\begin{proof}
%
Without loss of generality we can assume that $u \geq 0 .$ By (C2) we have for all $t>0,$
\begin{align} \label{condC3}
 F(x,t) \leq \frac{1}{2} tf(x,t).
\end{align}

Hence using radial estimate \eqref{radialestimate} and \eqref{condC3} we have,
\begin{align}\label{e2}
 \int_{\bn} F(x,u) \ dv_{g} & < \frac{1}{2} \int_{\bn} u f(x,u) \ dv_{g} \notag \\
 & = \int_{\bn \cap \{ |x| > \frac{1}{2}\}} u f(x,u) \ dv_{g} + \int_{ \bn \cap \{|x| < \frac{1}{2}\}} u f(x,u) \ dv_{g}.
\end{align}
Consider the first integral of \eqref{e2},

\begin{align}
 \int_{\bn \cap \{|x| > \frac{1}{2}\}} u f(x,u) \ dv_{g} & = 
\int_{\bn \cap \{|x| > \frac{1}{2}\}} u h(x,u)(e^{\lambda u^2} -1) \ dv_{g} \notag \\
& = \int_{\bn \cap \{|x| > \frac{1}{2}\}} u h(x,u) \frac{(e^{\lambda u^2} -1)}{(1 - |x|^2)^2} \ dx \notag \\
& \leq C ||u||^{\frac{3}{2}} \int_{\bn \cap \{|x| > \frac{1}{2}\}} (1 - |x|^2)^{- \frac{1}{2}} \ dx < \infty. 
\end{align}
The second integral of \eqref{e2} is finite by using Euclidean version of 
Moser-Trudinger inequality \eqref{mtr}. Hence this proves the proposition.  
\end{proof}


Before going further we need some notations and definitions. Let $f$ be a function of critical growth on $\bn.$ Define 

\begin{align}
 \mathcal{M} & = \left \{ u \in H^{1}_{R}(\bn) \setminus \{0 \}: \ ||u||^2 = \int_{\bn} f(x,u)u \ dv_{g} \right \}, \notag \\
 & \mathcal{M}_{1} = \left \{ u \in \mathcal{M} : u^{\pm} \in \mathcal{M}  \right \}, \notag \\
& I_{\la}(u) = \frac{1}{2} \int_{\bn} f(x,u)u \ dv_{g} - \int_{\bn} F(x,u) \ dv_{g}, \notag 
\end{align}
\[
 \frac{\eta (f)^2}{2} = \inf_{u \in \mathcal{M}} J_{\la}(u), 
\]
\[
 \frac{\eta_{1} (f)^2}{2} = \inf_{u \in \mathcal{M}_{1}} I_{\la}(u).
\]

We show the existence of solutions of Eq. \eqref{main} by minimizing the functional $J_{\lambda}$ over $\mathcal{M}.$ However
the main difficulty lies in the validity of Palais-Smale condition. The next section is devoted to the study of 
Palais-Smale condition.

\begin{section}{ Palais-Smale condition and some convergence results}

In this section we study the Palais-Smale condition of the following problem

\begin{equation}\label{E:1.1}
\left.
 \begin{array}{rlll}

  -\Delta_{g}  u  & =  f(x,u) &{\rm in} \; \bn , \\
       u & \in H^{1}(\bn), 
 \end{array} \right\}
\end{equation}
where $f(x,u)$ denotes the function of critical growth. We say $u_{k} \in H^{1}(\bn)$ is a Palais-Smale sequence for $J_{\lambda}$
at a level $c$ if $J_{\lambda}(u_{k}) \rightarrow c $ and $J^{\prime}_{\lambda}(u_{k}) \rightarrow 0$ in $H^{-1}(\bn).$
We show that if we restrict $J_{\lambda}$ to $H^{1}_{R}(\bn),$ then $J_{\lambda}$ satisfy the (P-S)$_{c}$ condition 
for all $c \in \left(0,\frac{2 \pi }{\lambda} \right).$ To be precise we state the following theorem :

\begin{thm}\label{PS}
 Let $f(x,t) = h(x,t)(e^{\la t^2} - 1)$ be a function of critical growth on $\bn$ and $J_{\lambda} : H^1_R(\bn) \rightarrow \mathbb{R}$
 be defined as in \eqref{JF}. Then \\

\item[(i)] $J_{\lambda}$ satisfies Palais-Smale condition on $(0, \frac{2\pi}{\la});$
 \item[(ii)] Moreover if $h$ satisfies,
 \begin{equation} \label{PSlavel}
  \displaystyle{\overline{\lim_{t \rightarrow \infty}} \inf_{x \in K} h(x,t)t = \infty} \ \mbox{for any compact subset} \ 
 K \ \mbox{of} \ \bn,
\end{equation}
then
\begin{align*}
0< \eta(f)^2 <  \frac{4 \pi}{\la}.
\end{align*}
\end{thm}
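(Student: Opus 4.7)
Given a Palais-Smale sequence $(u_k) \subset H^1_R(\bn)$ at level $c \in (0, 2\pi/\lambda)$, I would proceed in three steps. First, combining $J_\la(u_k) = c + o(1)$ with $\langle J'_\la(u_k), u_k\rangle = o(\|u_k\|)$ and using condition (C2) (which via the monotonicity of $f(x,t)/t$ yields $tf(x,t) - 2F(x,t) \geq 0$) together with (C3), I derive a uniform bound on $\|u_k\|$; passing to a subsequence, $u_k \rightharpoonup u$ in $H^1_R(\bn)$, $u_k \to u$ a.e., and strongly in $L^p_{loc}$. Second, to show $u$ is a weak solution, I would observe that on any compact $K \subset \bn$ the (Euclidean) Moser-Trudinger inequality combined with the uniform bound on $\|u_k\|$ supplies equi-integrability of $f(x,u_k)$ on $K$, so Vitali's theorem gives $\int_\bn f(x,u_k)\varphi\,dv_g \to \int_\bn f(x,u)\varphi\,dv_g$ for every $\varphi \in C_c^\infty(\bn)$, hence $-\De_g u = f(x,u)$ weakly.

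Third, for strong convergence set $w_k := u_k - u$. A Brezis--Lieb splitting adapted to the critical exponential nonlinearity gives
\[
J_\la(u_k) = J_\la(u) + \tfrac{1}{2}\|w_k\|^2 - \int_\bn F(x,w_k)\,dv_g + o(1),
\]
with a parallel splitting for $\langle J'_\la(u_k), u_k\rangle$. Since $u$ is a critical point, $J_\la(u) \geq 0$ by (C2), so the residual level $c - J_\la(u)$ lies strictly below $2\pi/\la$. Using this together with the relation $\|w_k\|^2 = \int_\bn w_k f(x,w_k)\,dv_g + o(1)$, I expect to deduce that $\la\|w_k\|^2 < 4\pi$ eventually; applied to $w_k/\|w_k\|$, the hyperbolic P.L.~Lions lemma (Lemma~\ref{L3.5}) then furnishes some $q > 1$ with $\sup_k \int_\bn (e^{4\pi q (w_k/\|w_k\|)^2} - 1)\,dv_g < \infty$, giving equi-integrability of $F(x,w_k)$ and $w_k f(x,w_k)$ on all of $\bn$, hence $\|w_k\| \to 0$. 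The \emph{main obstacle} is validating the Brezis--Lieb splitting for the exponential nonlinearity in the noncompact hyperbolic setting: unlike in polynomial-growth regimes this is not automatic, and verifying it requires the subcritical bound $c < 2\pi/\la$ so that concentration is ruled out.

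\textbf{Plan for Part (ii).} For positivity, if $u \in \mathcal{M}$ then $\|u\|^2 = \int_\bn u f(x,u)\,dv_g$; by (C1) the nonlinearity behaves like $O(u^{a+3})$ near zero and the radial pointwise estimate (Proposition~\ref{eqre}) together with the Moser-Trudinger inequality on compact sets yields $\int_\bn u f(x,u)\,dv_g \leq C(\|u\|^{a+3} + \|u\|^{p})$ for a power $p > 2$, forcing $\|u\| \geq c_0 > 0$ on $\mathcal{M}$. Combined with the strict inequality $F(x,t) < \tfrac{1}{2}tf(x,t)$ coming from (C2), this yields $\eta(f)^2 > 0$.

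For the upper bound, I would use the Moser functions $m_{l,R_0}$ (normalized so that $\|m_{l,R_0}\| = 1$). For each $l$ small, the map $t \mapsto J_\la(tm_{l,R_0})$ attains a unique maximum at some $t_l > 0$, and $t_l m_{l,R_0} \in \mathcal{M}$, so $\eta(f)^2 \leq 2J_\la(t_l m_{l,R_0}) \leq t_l^2$. Arguing by contradiction, assume $t_l^2 \geq 4\pi/\la$ along a sequence $l \to 0$. Then on the inner disc $\{|x| < l\}$ where $m_{l,R_0} = \frac{1}{\sqrt{2\pi}}(\log R_0/l)^{1/2}$ one obtains $\la(t_l m_{l,R_0})^2 \geq 2\log(R_0/l)$, hence $e^{\la(t_l m_{l,R_0})^2} \geq (R_0/l)^2$, while simultaneously $t_l m_{l,R_0} \to \infty$. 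The hypothesis (\ref{PSlavel}) allows one to choose the scale $l$ along a subsequence realizing $\inf_K h(\cdot, t_l m_{l,R_0}) \cdot t_l m_{l,R_0} \to \infty$ on a compact $K$ containing $\mathrm{supp}(m_{l,R_0})$. Estimating $\int_{|x|<l} F(x, t_l m_{l,R_0})\,dv_g$ from below by this product times $e^{\la(t_l m_{l,R_0})^2}\cdot\mathrm{vol}_g(\{|x|<l\}) \sim R_0^2$, the contribution explodes, contradicting the fact that $t_l$ stays bounded (which follows from $t_l^2 = \int t_l m_{l,R_0} f(x, t_l m_{l,R_0})\,dv_g$ and the same estimate). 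Hence $t_l^2 < 4\pi/\la$ for $l$ small, giving $\eta(f)^2 < 4\pi/\la$.
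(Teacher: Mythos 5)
Your architecture for part (i) is reasonable, but it defers rather than resolves the one genuinely hard step. Everything in your third step reduces to showing $\int_{\bn} f(x,u_k)u_k\,dv_g \to \int_{\bn} f(x,u)u\,dv_g$, and you propose to get this from a Brezis--Lieb splitting of $\int f(x,u_k)u_k$ which you yourself flag as "the main obstacle" and do not verify; for a critical exponential nonlinearity this splitting requires uniform integrability of $f(x,u_k)u_k$, which is precisely the point at issue. Worse, your proposed source of that integrability — applying Lemma \ref{L3.5} to $w_k/\|w_k\|$ — cannot work as stated: $w_k = u_k - u \rightharpoonup 0$, and Lemma \ref{L3.5} requires a \emph{nonzero} weak limit; with weak limit zero the admissible exponents are $p < 1$, i.e.\ no gain over plain Moser--Trudinger. (Once you know $\lim\|w_k\|^2 < 4\pi/\la$ you do not need Lions at all on the $w_k$ piece — but you only reach that point after the splitting you have not proved.) The paper's route is different and is where the compactness actually comes from: in Proposition \ref{L3.6} one normalizes the \emph{full} sequence, $v_k = u_k/\|u_k\|$, whose weak limit is the nonzero $u/\lim\|u_k\|$; since $\lim\|u_k\|^2 = 2c + 2\int F(x,u) \le 2c + \|u\|^2 < \frac{4\pi}{\la} + \|u\|^2$, the gain $p < (1-\|v\|^2)^{-1}$ in Lemma \ref{L3.5} is exactly enough to push past the critical exponent and obtain $\sup_k\int(e^{(1+\epsilon)\la u_k^2}-1)\,dv_g<\infty$, from which the convergence of $\int f(x,u_k)u_k$ follows. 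Until you either reproduce this normalization argument or independently prove your splitting, part (i) is not established. (You should also record why the weak limit $u$ is nonzero when $c>0$; in the paper this is a separate step using Propositions \ref{L3.3} and \ref{L3.4}, and Proposition \ref{L3.6} needs it.)

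Part (ii) is essentially the paper's argument (the Moser-function concentration of Lemma \ref{L3.7}), with two caveats. For the lower bound, showing $\|u\|\ge c_0>0$ on $\mathcal{M}$ does not by itself give $\inf_{\mathcal{M}} I_{\la}>0$: the pointwise strict inequality $F(x,t)<\tfrac12 tf(x,t)$ does not upgrade to a uniform positive lower bound on the integral, so your final sentence is a non sequitur. The paper instead takes a minimizing sequence with $I_{\la}(u_k)\to0$, deduces $u_k\to0$ strongly via Lemma \ref{L3.1}(ii), Fatou and Proposition \ref{L3.3}, and then gets the contradiction $1=\int f'(x,0)v^2\,dv_g=0$ from Proposition \ref{L3.4} applied to $v_k=u_k/\|u_k\|$ (here (C1) with $a>0$ gives $f'(x,0)=0$). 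For the upper bound, note that $t_l$ is determined by $l$, so under the $\overline{\lim}$ hypothesis \eqref{PSlavel} you cannot freely arrange $t_l m_{l,R_0}(0)$ to run through the subsequence along which $h_{0}(t)t\to\infty$; the paper's Lemma \ref{L3.7} avoids this by fixing the candidate value $a$ and letting $t=a\,m_{l,R_0}(0)$ sweep continuously through all large values as $l\to0$, together with the reduction $\sup_{\|u\|\le1}\int f(x,\eta(f)u)u\,dv_g\le\eta(f)$ obtained from the fibering map $\gamma(u)$.
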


Above theorem will play a crucial role in the study of existence of solutions. The main difficulties for studying Palais-Smale 
condition is coming from the concentration phenomenon and through vanishing of mass. However vanishing can be handled by using
the radial estimate proved in section 2, Lemma \ref{radialestimate}.
 Keeping this in mind, we plan to address some of the important propositions involving 
convergence of critical growth functions. The propositions and lemmas 
needed in the proof of Theorem \ref{PS} are collected below.

\begin{lem} \label{L3.1}
Let $f(x,t) = h(x,t)(e^{\la t^2} - 1)$ be a function of critical growth. Then we have \\

\item[(i)] $f(x,u) \in L^p(\bn ,dv_{g}),$ for all $p \in [1, \infty) \ \mbox{and} \ u \in H^1(\bn).$  \\

\item[(ii)] $I_{\lambda}(u) \geq 0$ for all $u$ and $I_{\lambda}(u) = 0$ if and only if $u \equiv 0.$  
Moreover, given $\epsilon > 0,$ there exists a constant $C_0(\epsilon) > 0$ such that for all $u \in H^1_R(\bn),$
 \begin{equation} \label{3.0}
 \int_{\bn} f(x,u)u \ dv_{g} \leq C_0(\epsilon)(1 + I_{\lambda}(u)) + \epsilon ||u||^2 .
 \end{equation}
\end{lem}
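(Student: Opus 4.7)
Fix $u\in H^1(\bn)$ and $p\in[1,\infty)$. I split the integration into $\{|u|\le 1\}$ and $\{|u|>1\}$. On the first set, (C1) gives $|h(x,u)|\le C|u|^a$, so $|f(x,u)|^p\le C^p|u|^{p(a+2)}$, which is integrable since the sharp Poincar\'e--Sobolev inequality \eqref{p} yields $u\in L^q(\bn,dv_g)$ for every $q\ge 2$. On $\{|u|>1\}$, (C4) produces $h(x,u)\le C_{\epsilon'}e^{\epsilon' u^2}$ for any $\epsilon'>0$, hence $|f(x,u)|^p\le C\,e^{p(\lambda+\epsilon')u^2}$. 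To integrate this I truncate: write $u=u_M+u^M$ with $u_M=\min(|u|,M)\operatorname{sign}u$ and $u^M=(|u|-M)_+\operatorname{sign}u$; since $\|u^M\|\to 0$ as $M\to\infty$, I pick $M$ with $2p(\lambda+\epsilon')\|u^M\|^2\le 4\pi$, and the hyperbolic Moser--Trudinger inequality of Mancini--Sandeep applied to $u^M/\|u^M\|$ bounds $\int_\bn(e^{2p(\lambda+\epsilon')(u^M)^2}-1)\,dv_g$, while $e^{2p(\lambda+\epsilon')u_M^2}\le e^{2p(\lambda+\epsilon')M^2}$ is bounded. H\"older together with $u^2\le 2u_M^2+2(u^M)^2$ finishes the argument.

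\textbf{Part (ii).} The nonnegativity is routine: from (C2), $\partial_t(f(x,t)/t)=(tf'(x,t)-f(x,t))/t^2>0$ for $t>0$, so $t\mapsto f(x,t)/t$ is strictly increasing; integrating $f(x,s)<(s/t)f(x,t)$ on $[0,t]$ gives $F(x,t)<tf(x,t)/2$ for $t>0$, and by oddness of $f$ the strict inequality extends to all $t\ne 0$. Hence the pointwise integrand $uf(x,u)/2-F(x,u)$ is nonnegative, and strictly positive wherever $u\ne 0$, which yields $I_\lambda(u)\ge 0$ with equality iff $u\equiv 0$. For the quantitative bound, I fix $\tau>0$ and split $\int uf=\int_{|u|\le\tau}+\int_{|u|>\tau}$. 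On $\{|u|\le\tau\}$ (with $\tau\le\delta_0$ small), (C1) gives $uf\le C|u|^{a+3}\le C\tau^{a+1}u^2$, so by Poincar\'e \eqref{firsteigen} this piece is controlled by $4C\tau^{a+1}\|u\|^2\le\epsilon\|u\|^2$. On $\{|u|>\tau\}$ with $\tau>2M_1$, I rearrange (C3) pointwise as $uf/2-F\ge(u/2-M_1)f-M_1 g$, whose integration yields $(\tau/2-M_1)\int_{|u|>\tau}f\,dv_g\le I_\lambda(u)+M_1\|g\|_1$. Combining this with $\int F\le M_1\|g\|_1+M_1\int f$ and the identity $\int uf=2I_\lambda+2\int F$, and absorbing the resulting $(2M_1/\tau)\int_{|u|>\tau}uf$ term on the right, delivers $\int_{|u|>\tau}uf\le C_0(\epsilon)(1+I_\lambda(u))$. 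Adding the two regions produces \eqref{3.0}.

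\textbf{Main obstacle.} The delicate point is reconciling the two constraints on $\tau$: smallness (to push the coefficient of $\|u\|^2$ below $\epsilon$) and $\tau>2M_1$ (for the (C3)-based absorption to produce a positive coefficient). When $M_1$ is not small relative to the constant $C$ arising from (C1), I expect this will force inserting an intermediate range $\{\delta_0<|u|\le\tau\}$ handled by a Chebyshev estimate $\mathrm{vol}(\{|u|>\delta_0\})\le 4\|u\|^2/\delta_0^2$, or else exploiting the sharper asymptotic $F(x,t)/(tf(x,t))\to 1/(a+3)$ as $t\to 0$ (a direct consequence of (C1)), which permits bounding $\int_{|u|\le\delta_0}uf$ directly by a constant multiple of $I_\lambda(u)$ and thereby trading $\|u\|^2$ for $I_\lambda$. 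This careful juggling of (C1), (C3), the Poincar\'e inequality, and the Moser--Trudinger machinery is the technical heart of the proof.
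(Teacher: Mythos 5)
Your part (i) and the nonnegativity statement in part (ii) are correct; in fact your truncation argument for integrating $e^{p(\la+\epsilon')u^2}-1$ is more careful than the paper's one-line appeal to \eqref{mtr}, and your derivation of $F(x,t)\le \tfrac12 tf(x,t)$ from (C2) is exactly the paper's. The problem is the quantitative bound \eqref{3.0}, where the obstacle you flag is a genuine gap and neither of your proposed repairs closes it. The Chebyshev estimate $\mathrm{vol}_g(\{|u|>\delta_0\})\le 4\|u\|^2/\delta_0^2$ only yields $\int_{\{\delta_0<|u|\le\tau\}}f(x,u)u\,dv_g\le C(\tau,\delta_0)\|u\|^2$ with a constant of order $\tau\sup f/\delta_0^2$, which can be made neither small (so it cannot be absorbed into $\epsilon\|u\|^2$) nor independent of $u$ (so it cannot go into $C_0(\epsilon)$). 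And the asymptotic $F(x,t)/(tf(x,t))\to 1/(a+3)$ is not a consequence of (C1): that hypothesis is only the one-sided bound $\sup_x h(x,t)=O(t^a)$, so $h$ may oscillate or vanish faster, and no uniform lower bound on $tf-2F$ relative to $tf$ follows; hence the intermediate region cannot in general be traded for $I_\lambda(u)$ this way.

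The ingredient you are missing is the one that explains why \eqref{3.0} is asserted only for $u\in H^1_R(\bn)$ rather than all of $H^1(\bn)$: the radial pointwise estimate \eqref{radialestimate}. The paper splits $\{u\le 4M_1\}$ into $\{u\le\delta\}$ (handled exactly as you do, via (C1) and Poincar\'e) and $\{\delta<u\le 4M_1\}$, and on the latter it uses \eqref{radialestimate} to confine the superlevel set $\{u>\delta\}$ to a region of controlled hyperbolic volume, so that $\int_{\{\delta<u\le4M_1\}}f\,dv_g$ is bounded by a constant that goes into $C_0(\epsilon)$; the high region $\{u>4M_1\}$ is then controlled by $I_\lambda$ through (C3) via the pointwise inequality $u-2M_1\ge u/2$ there, much as in your absorption step. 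Your proof never invokes radiality, so as written it is attempting to prove a stronger statement than the lemma claims, and the intermediate range is exactly where that attempt breaks down. To repair your argument you should keep your two outer regions, insert the intermediate region $\{\delta_0<|u|\le 4M_1\}$, and control it using the decay $|u(x)|\le \frac{\|u\|}{\sqrt{4\pi}}\frac{(1-|x|^2)^{1/2}}{|x|^{1/2}}$ of radial functions rather than Chebyshev.
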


 	\begin{proof}
 \item[(i)] By (C4) for a given $\epsilon > 0,$ there exists an $N_0 > 0$ such that for all $t \geq N_0$ we have
 \begin{align*}
  f(x,t) \leq C(e^{(\la + \epsilon)t^2} - 1).
 \end{align*}

 For $p \in [1, \infty),$ using the inequality 
$(e^{t} - 1)^{p} \leq  (e^{pt} - 1)$ for $ t \geq 0$ and  hyperbolic version of Moser-Trudinger inequality \eqref{mtr} 
we have

 \begin{align*}
  \int_{\bn} |f(x,u)|^p \ dv_{g} &\leq \int_{\{|u| > N_0\}} |f(x,u)|^p \ dv_{g} + \int_{\{|u| \leq N_0\}} |f(x,u)|^p \ dv_{g} \\
                         &\leq C\int_{\bn} (e^{p(\la + \epsilon)u^2} - 1) \ dv_{g} + C\int_{\bn} (e^{\la p u^2} - 1) \ dv_{g}, \\
                         &< +\infty.
 \end{align*}
 \item[(ii)] By (C2), $f(x,t)t - 2F(x,t) \geq 0$ and equal to $0$ iff $t = 0,$ and hence this implies 
\[
 I_{\lambda}(u) \geq 0 \ \mbox{and} \   I_{\lambda}(u) = 0
 \ \mbox{if and only if} \ u \equiv 0.
\]

  For the second part it is enough to prove the inequality for all 
 $u \in H^1_R(\bn)$ with $u \geq 0.$ Fix $\epsilon >0,$ by (C3), $F(x,t) \leq M_1(g(x) + f(x,t))$ for some positive function 
 $g \in L^1(\bn ,dv_{g}).$ Then
\begin{align} \label{3.1}
  2I_{\lambda}(u) &= \int_{\bn} [f(x,u)u - 2F(x,u)] \ dv_{g} \notag \\
        &\geq \int_{\bn} [f(x,u)u - 2M_1(g(x) + f(x,t))] \ dv_{g} \notag \\
        &= \int_{\bn} f(x,u)(u - 2M_1) \ dv_{g} -2M_1 \int_{\bn} g(x) \ dv_{g}, \notag \\
        &\geq \int_{\bn} f(x,u)(u - 2M_1) \ dv_{g} - C.
\end{align}
Observing that  $ u - 2 M_{1} \geq \frac{1}{2} u$ on $\{ u \geq 4 M_{1} \},$ we have
\begin{align} \label{3.2}
 \int_{\bn} f(x,u)(u - 2M_1) \ dv_{g} &= \int_{\bn \cap \{u \leq 4M_1\}} f(x,u)(u - 2M_1) \ dv_{g} \notag\\ 
                                                 & + \int_{\bn \cap \{u > 4M_1\}} f(x,u)(u - 2M_1) \ dv_{g}  \notag \\
                                    & \geq  \int_{\bn \cap \{u \leq 4M_1\}} f(x,u)(u - 2M_1) \ dv_{g}, \notag \\ 
                                                 & + C\int_{\bn \cap \{u > 4M_{1}\}} f(x,u)u \ dv_{g}.
\end{align}
Therefore from \eqref{3.1} and \eqref{3.2} we have
\begin{align} \label{3.1.1}
 \int_{\bn \cap \{u > 4M_{1}\}} f(x,u)u \ dv_{g} &\leq  C \left |\int_{\bn \cap \{u \leq 4M_1\}} f(x,u)(u - 2M_1) \ dv_{g} \right|
 \notag \\
                                                 &+  C\int_{\bn} f(x,u)(u - 2M_1) \ dv_{g}, \notag \\
                                                 & \leq C \left |\int_{\bn \cap \{u \leq 4M_1\}} f(x,u)(u - 2M_1) 
\ dv_{g} \right| + C(1 + I_{\lambda}(u)).
\end{align}

 Next we estimate $\left |\int_{\bn \cap \{u \leq 4M_1\}} f(x,u)(u - 2M_1) \ dv_{g} \right|.$
 
 Let $\delta >0$ be a small number depending on $\epsilon$ and whose smallness will be decided later.
 By radial estimate \eqref{radialestimate}, there exists a compact set $K_0$ such that the set 
 $\{u > \delta\}$ is contained in $K_0,$ for every $u \in H^1_R(\bn).$
 We can write
 \begin{align} \label{3.3}
  \int_{\{u \leq 4M_1\}} f(x,u) \ dv_g 
  &= \int_{\{\delta < u \leq 4M_1\}} h(x,u)(e^{\la u^2} - 1) \ dv_g 
  + \int_{\{u \leq \delta\}} h(x,u)(e^{\la u^2} - 1) \ dv_g, \notag \\
 &\leq C + \int_{\{u \leq \delta\}} h(x,u)(e^{\la u^2} - 1) \ dv_g.
 \end{align}
Now by (C1), we can estimate the last integral in \eqref{3.3} as
\begin{align} \label{3.3.1}
 \int_{\{u \leq \delta\}} h(x,u)(e^{\la u^2} - 1) \ dv_g
 &\leq C \int_{\{u \leq \delta\}} u^a (e^{\la u^2} - 1) \ dv_g \notag \\
 &\leq C \delta^a \int_{\bn} u^2 \ dv_g \notag \\
 &\leq C \delta^a ||u||^2,
 \end{align}
where the constant $C$ in \eqref{3.3.1} does not depends on $u.$ Now choosing $6M_1 C \delta^a < \frac{\epsilon}{2}$ 
we get
\begin{align} \label{3.3.2}
 \left |\int_{\bn \cap \{u \leq 4M_1\}} f(x,u)(u - 2M_1) \ dv_{g} \right| 
 \leq C + \frac{\epsilon}{2}||u||^2.
\end{align}

Similarly it follows that 
\begin{equation} \label{3.4}
\int_{\bn \cap \{u \leq 4M_{1} \}} f(x,u)u \ dv_{g} \leq C + \frac{\epsilon}{2}||u||^2. 
\end{equation}
Hence from \eqref{3.1.1}, \eqref{3.3.2} and \eqref{3.4} we get
\begin{align*}
 \int_{\bn} f(x,u)u \ dv_g &= \int_{\bn \cap \{u \leq 4M_{1} \}} f(x,u)u \ dv_{g}
                             + \int_{\bn \cap \{u > 4M_{1}\}} f(x,u)u \ dv_{g} \\
                           & \leq C_0(1 + I_{\lambda}(u)) + \epsilon ||u||^2.
\end{align*}
\end{proof}

\begin{lem} \label{L3.2}
Let $f(x,t) = h(x,t)(e^{\la t^2} - 1)$ be function of critical growth, then
\begin{align*}
 \tilde c^2 := \sup \{c^2 : \sup_{u \in H^{1}_{R}(\bn), ||u|| \leq 1} \int_{\bn} f(x,cu)u \ dv_{g} < +\infty \} = \frac{4\pi}{\la}.
\end{align*}

\end{lem}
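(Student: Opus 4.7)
The statement asserts two bounds: $\tilde{c}^{2} \geq \frac{4\pi}{\lambda}$ (finiteness of the supremum when $c^{2} < \frac{4\pi}{\lambda}$) and $\tilde{c}^{2} \leq \frac{4\pi}{\lambda}$ (the supremum is $+\infty$ once $c^{2} > \frac{4\pi}{\lambda}$). My plan is to treat these two directions separately, using H\"older combined with the Moser-Trudinger inequality \eqref{mtr} for the first, and a test-function construction using the Moser functions $m_{l,R_{0}}$ from Section 2 for the second.

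\medskip

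\noindent\textit{Direction $\tilde{c}^{2} \geq \frac{4\pi}{\lambda}$.} Fix $c>0$ with $\lambda c^{2} < 4\pi$. Choose $\epsilon >0$ small enough that $(\lambda+\epsilon)c^{2} < 4\pi$, and then pick $p>1$ close to $1$ so that $p(\lambda+\epsilon)c^{2} \leq 4\pi$. Using (C1) near $t=0$ (which gives $f(x,t) \le Ct^{a+2}$) and (C4) for $t$ large (which gives $\sup_{x} h(x,t) \le e^{\epsilon t^{2}}$), a standard interpolation yields a uniform pointwise bound
\[
 0 \leq f(x,t) \leq C_{\epsilon}\bigl(e^{(\lambda+\epsilon) t^{2}} - 1\bigr) \qquad \forall\,(x,t)\in\bn\times[0,\infty).
\]
Combining H\"older's inequality with $(e^{s}-1)^{p} \le e^{ps}-1$ for $s\geq 0$,
\[
 \int_{\bn} f(x,cu)|u|\, dv_{g} \leq C_{\epsilon}\Bigl(\int_{\bn}\bigl(e^{p(\lambda+\epsilon)c^{2} u^{2}}-1\bigr)\, dv_{g}\Bigr)^{1/p} \|u\|_{L^{p'}(\bn)}.
\]
For $\|u\|\leq 1$ the first factor is uniformly finite by the Moser-Trudinger inequality \eqref{mtr} (since $p(\lambda+\epsilon)c^{2}\leq 4\pi$), and the second is uniformly finite via the Poincar\'e-Sobolev inequality \eqref{p} (any exponent is admissible when $N=2$).

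\medskip

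\noindent\textit{Direction $\tilde{c}^{2} \leq \frac{4\pi}{\lambda}$.} Fix $c$ with $\lambda c^{2} > 4\pi$ and test against the radial Moser functions $u_{l}:=m_{l,R_{0}} \in H^{1}_{R}(\bn)$, which satisfy $\|u_{l}\|=1$. On $\{|x|<l\}$ one has $u_{l}\equiv M_{l}:=(2\pi)^{-1/2}(\log(R_{0}/l))^{1/2}$, so $cM_{l}\to\infty$ as $l\to 0^{+}$ and $\lambda c^{2}M_{l}^{2}=\frac{\lambda c^{2}}{2\pi}\log(R_{0}/l)$. Pick $\epsilon>0$ with $(\lambda-\epsilon)c^{2}>4\pi$. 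Applying the first half of (C4) on the compact set $K=\overline{B(0,R_{0})}\subset\bn$, for $l$ sufficiently small we have $h(x,cM_{l})\geq e^{-\epsilon c^{2} M_{l}^{2}}$ uniformly in $|x|\le R_{0}$. Hence, since $f(x,cu_{l})u_{l}\geq 0$,
\[
 \int_{\bn} f(x,cu_{l})u_{l}\, dv_{g} \geq \int_{|x|<l} e^{-\epsilon c^{2} M_{l}^{2}} \cdot \tfrac{1}{2} e^{\lambda c^{2} M_{l}^{2}} \cdot M_{l}\cdot \Bigl(\tfrac{2}{1-|x|^{2}}\Bigr)^{2} dx \geq C\, l^{2-\frac{(\lambda-\epsilon)c^{2}}{2\pi}}\sqrt{\log(R_{0}/l)}.
\]
Since $2-\frac{(\lambda-\epsilon)c^{2}}{2\pi}<0$, this diverges as $l\to 0^{+}$, so $\sup_{\|u\|\leq 1}\int f(x,cu)u\,dv_{g}=+\infty$, ruling this $c^{2}$ out of the defining set.

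\medskip

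The main technical subtlety lies in the second direction: one needs a pointwise \emph{lower} bound on $h(x,cM_{l})$ that is \emph{uniform} over the entire compact $\overline{B(0,R_{0})}$ where $u_{l}$ concentrates, not merely pointwise in $x$. This is exactly what the first half of (C4) provides, and it is paired with the upper bound in (C4) which drives the first direction. The algebraic balance between $\lambda c^{2}/(2\pi)$ (from the exponential) and the factor $l^{2}$ (from the Euclidean volume $dx$, while the conformal factor stays bounded near the origin) is what makes $4\pi/\lambda$ the exact threshold.
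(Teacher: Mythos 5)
Your proof is correct, and both directions rest on the same two pillars as the paper's argument: the two limits in (C4) (an upper bound $h(x,t)\le e^{\epsilon t^2}$ for large $t$ uniformly in $x$, and a lower bound $h(x,t)\ge e^{-\epsilon t^2}$ for large $t$ uniformly on compacts) together with the Moser--Trudinger inequality \eqref{mtr} and the sharpness of the constant $4\pi$. The implementation, however, differs in both halves. For $\tilde c^2\le 4\pi/\la$ the paper assumes $c^2$ lies in the defining set, uses \eqref{3.6} to transfer finiteness of $\sup_{\|u\|\le1}\int f(x,cu)u$ to finiteness of $\sup_{\|u\|\le1}\int_{\{|x|\le\alpha\}}(e^{\la(1-\epsilon)c^2u^2}-1)\,dv_g$, controls the outer region $\{|x|>\alpha\}$ via the radial estimate \eqref{radialestimate}, and then invokes the sharpness of the Moser--Trudinger constant; you instead exhibit divergence directly by plugging in the Moser functions $m_{l,R_0}$, which are supported in $B(0,R_0)$ and hence let you bypass the outer region and the radial estimate entirely --- in effect you inline the proof of sharpness rather than citing it. For $\tilde c^2\ge 4\pi/\la$ the paper argues by contradiction from the supremum ($\tilde c^2<4\pi/\la$ would allow $(1+\epsilon)\tilde c$ to remain in the set), splitting the domain according to whether $|u|$ exceeds $t_1/((1+\epsilon)\tilde c)$ and bounding the small-$|u|$ piece by $C\|u\|^2$; you verify directly that every $c^2<4\pi/\la$ belongs to the set, using the global pointwise bound $f(x,t)\le C_\epsilon(e^{(\la+\epsilon)t^2}-1)$ (valid by (C1) on bounded $t$-ranges and (C4) for large $t$), H\"older with an exponent $p>1$ chosen so that $p(\la+\epsilon)c^2\le4\pi$, and the Poincar\'e--Sobolev inequality \eqref{p} for the $L^{p'}$ factor. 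Both routes are sound; yours is somewhat more self-contained and direct, while the paper's reuses machinery (the radial estimate, Lemma~\ref{L3.1}-style splitting) already developed for the other convergence lemmas.
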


\begin{proof}
 Fix $\alpha \in (0,1)$ and $\epsilon >0$, by (C4), there exists constants $t_1 , t_2, C_1(\epsilon), C_2(\epsilon) > 0$
 such that
 \begin{equation} \label{3.5}
  f(x,t)t \leq C_1(\epsilon)(e^{\la (1 + \epsilon) t^2} - 1), \quad \mbox{for all} ~ t \geq t_1,
 \end{equation}
 \begin{equation} \label{3.6}
  f(x,t)t \geq C_2(\epsilon)(e^{\la (1 - \epsilon) t^2} - 1), \quad \mbox{for all} ~ t \geq t_2 ~ \mbox{and} ~|x| \leq \alpha .
 \end{equation}
 Now assume $c>0$ be such that :  $\displaystyle{\sup_{u \in H^{1}_{R}(\bn), ||u|| \leq 1} \int_{\bn} f(x,cu)u \ dv_{g} < +\infty} .$ 
 Then using 
 \eqref{3.6}
 \begin{align} \label{3.7}
   \int_{\bn} f(x,cu)u \ dv_{g} &= \frac{1}{c}\int_{\bn} f(x,cu)(cu) \ dv_{g}, \notag \\
                            &\geq \frac{1}{c} \int_{\{|x| \leq \alpha\} \cap \{u \geq \frac{t_2}{c}\}} f(x,cu)(cu) \ dv_{g}, \notag \\
                            &\geq \frac{C_2(\epsilon)}{c} \int_{\{|x| \leq \alpha\} \cap \{u \geq \frac{t_2}{c}\}} [e^{\la (1 - \epsilon)c^2 u^2} - 1] \ dv_{g}, 
  \end{align}
and 
\begin{equation} \label{3.8}
 \int_{\{|x| \leq \alpha\} \cap \{u \leq \frac{t_2}{c}\}} [e^{\la (1 - \epsilon)c^2 u^2} - 1] \ dv_{g} \leq C(\alpha, t_2, c).
\end{equation}
Therefore \eqref{3.7} and \eqref{3.8} together gives
\begin{equation} \label{3.9}
 \int_{\{|x| \leq \alpha\}} (e^{\la (1 - \epsilon)c^2 u^2} - 1) \ dv_{g} 
\leq C(\alpha,\epsilon, c)\int_{\bn} f(x,cu)u \ dv_{g} + C(\alpha, t_2, c).
\end{equation}
 Define $\tilde C = \frac{1}{\sqrt{4\pi}} \left[\frac{1 - \alpha ^2}{\alpha}\right]^{\frac{1}{2}}.$ 
Now by radial estimate we have : for all $u \in H^{1}_{R}(\bn)$ with $||u|| \leq 1,$
\begin{align*}
 |u(x)| \leq \tilde C \quad \mbox{whenever}~ |x| > \alpha.
\end{align*}

Therefore we have
\begin{align} \label{3.10}
 \int_{\{|x| > \alpha\}} (e^{\la (1 - \epsilon)c^2 u^2} - 1) \ dv_{g} 
 &\leq \int_{\{|u| \leq \tilde C\}} (e^{\la (1 - \epsilon)c^2 u^2} - 1) \ dv_{g} \notag \\
&\leq C\int_{\{|u| \leq \tilde C\}} u^2 e^{\la (1 - \epsilon)c^2 u^2} \ dv_{g} \notag \\
                                                              &\leq C\int_{\bn} u^2 \ dv_{g} \notag \\
                                                              &\leq C||u||^2, \notag \\ 
                                                              &\leq C.
\end{align}
Taking into account \eqref{3.9} and \eqref{3.10} we obtain
\begin{equation}
 \sup_{u \in H^{1}_{R}(\bn), ||u|| \leq 1} \int_{\bn} (e^{\la (1 - \epsilon)c^2 u^2} - 1) \ dv_{g} < +\infty.
\end{equation}
Now using hyperbolic version of Moser-Trudinger inequality \eqref{mtr},  
 we have $(1 - \epsilon)c^2 \leq \frac{4\pi}{\la}.$ Since $\epsilon > 0$ was arbitrary, we deduce that 
$\tilde c^2 \leq \frac{4\pi}{\la}.$ \\

Now suppose $\tilde c^2 < \frac{4\pi}{\la}.$ Choose $\epsilon > 0$ such that $(1 + \epsilon)^3 \tilde c^2 < \frac{4\pi}{\la}.$
Then for all $u \in H^{1}_{R}(\bn)$ with $||u|| \leq 1$ we have,
\begin{align} \label{3.11}
 \int_{\bn} f(x,(1 + \epsilon)\tilde cu)u \ dv_{g} 
 &\leq C\int_{\{|u| > \frac{t_1}{(1 + \epsilon)\tilde c}\}} f(x,(1 + \epsilon)\tilde cu)(1 + \epsilon)\tilde c u \ dv_{g} \notag \\
 & + C\int_{\{|u| \leq \frac{t_1}{(1 + \epsilon)\tilde c}\}} f(x,(1 + \epsilon)\tilde cu)(1 + \epsilon)\tilde c u \ dv_{g}, \notag \\
 &\leq C\int_{\bn} (e^{\la (1 + \epsilon)^3 \tilde c^2 u^2} - 1) \ dv_{g} \notag \\ 
       & + C\int_{\{|u| \leq \frac{t_1}{(1 + \epsilon)\tilde c}\}} (e^{\la (1 + \epsilon)^2 \tilde c^2 u^2} - 1) \ dv_{g}, \notag \\
 &\leq C + C\int_{\{|u| \leq \frac{t_1}{(1 + \epsilon)\tilde c}\}} u^2 e^{\la (1 + \epsilon)^2 \tilde c^2 u^2} \ dv_{g} \notag \\
 &\leq C + C\int_{\bn} u^2 \ dv_{g}, \notag \\
 &\leq C + C||u||^2.
\end{align}                                          
As a consequence we derive that :
\begin{align*}
 \sup_{u \in H^{1}_{R}(\bn), ||u|| \leq 1} \int_{\bn} f(x,(1 + \epsilon)\tilde c u)u \ dv_{g} < +\infty,
\end{align*}

which contradicts the definition of $\tilde c.$ So we must have $\tilde c^2 = \frac{4\pi}{\la}.$
\end{proof}

\begin{prop} \label{L3.3}
 Let $\{u_k\}$ be a sequence in $H^{1}_{R}(\bn)$ such that $u_k$ converges weakly to some $u$ in the space $H^{1}_{R}(\bn)$ and assume that
 \begin{align*}
  \sup_k \int_{\bn} f(x,u_k)u_k \ dv_{g} < +\infty.
 \end{align*}

 Then we have the following convergence results :
 
  \item[(i)] $\displaystyle{\lim_{k \rightarrow \infty} \int_{\{|x| < \alpha \} } f(x,|u_k|) \ dv_{g} =
                                                      \int_{\{|x| < \alpha \} } f(x,|u|) \ dv_{g}},$
                                                      for any $\alpha < 1.$
  \item[(ii)] $\displaystyle{\lim_{k \rightarrow \infty} \int_{\bn} F(x,u_k) \ dv_{g} = \int_{\bn} F(x,u) \ dv_{g}}.$
 
\end{prop}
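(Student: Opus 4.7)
The plan is to apply Vitali's convergence theorem, splitting $\bn$ into a compact piece $B_\alpha := \{|x|<\alpha\}$ and the tail $\bn\setminus B_\alpha$, and using the radial estimate \eqref{radialestimate} to obtain uniform control near the boundary $|x|=1$. As preliminary setup, since $\{u_k\}$ is bounded in $H^{1}_{R}(\bn)$, Rellich's theorem on compact subdomains combined with a diagonal argument over $\bn = \bigcup_n\{|x|<1-1/n\}$ produces a subsequence (not relabeled) with $u_k \to u$ a.e.\ on $\bn$, so that $f(x,u_k)\to f(x,u)$ and $F(x,u_k)\to F(x,u)$ a.e.\ by continuity. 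Moreover, \eqref{radialestimate} yields $|u_k(x)| \leq C(1-|x|^2)^{1/2}$ uniformly in $k$ on $\{|x| \geq 1/2\}$, which is the crucial ingredient for the tail.

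For (i), fix $\alpha \in (0,1)$; I verify uniform absolute continuity of $\{f(x,|u_k|)\}$ on the finite-volume set $B_\alpha$. For any measurable $E\subset B_\alpha$ and any $R>0$, splitting according to the size of $|u_k|$,
\[
\int_E f(x,|u_k|)\, dv_g \;\leq\; C(R)\int_E dv_g \;+\; \frac{1}{R}\int_{\bn} f(x,|u_k|)|u_k|\, dv_g \;\leq\; C(R)\int_E dv_g + \frac{C}{R},
\]
using (C1) to bound $f(x,t)$ for $|t|\leq R$ and the hypothesis for the remainder. Choosing $R$ large, then $\int_E dv_g$ small, delivers uniform absolute continuity, and Vitali (together with a.e.\ convergence) yields (i).

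For (ii), I decompose $\int_{\bn} F(x,u_k)\,dv_g = \int_{B_\alpha} F + \int_{\bn\setminus B_\alpha} F$. On the tail, for $\alpha$ close to $1$ the radial bound forces $|u_k(x)|$ uniformly small on $\{|x|\geq \alpha\}$, so (C1) gives $h(x,u_k)\leq C|u_k|^a$ and $e^{\la u_k^2}-1 \leq Cu_k^2$; combined with $F(x,t)\leq \tfrac12 t f(x,t)$ from (C2), this produces
\[
F(x,u_k(x)) \;\leq\; C|u_k(x)|^{a+3} \;\leq\; C(1-|x|^2)^{(a+3)/2},
\]
so that
\[
\int_{\bn\setminus B_\alpha} F(x,u_k)\, dv_g \;\leq\; C\int_{\{\alpha<|x|<1\}}(1-|x|^2)^{(a-1)/2}\, dx \;\xrightarrow{\alpha\to 1}\; 0,
\]
uniformly in $k$ (and identically for $u$), where $a>0$ ensures integrability at $|x|=1$. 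On $B_\alpha$, uniform absolute continuity of $\{F(x,u_k)\}$ follows from $F\leq \tfrac12|u_k|f(x,|u_k|)$ and a H\"older estimate of the shape $\int_E |u_k|f(x,|u_k|)\, dv_g \leq C\bigl(\int_E dv_g\bigr)^{1/2}$, where $C$ is uniform in $k$ by the local Sobolev embedding $H^{1}_{R}(\bn)\hookrightarrow L^q(B_\alpha)$ for any $q<\infty$ together with the $L^p$ bounds on $f(x,u_k)$ from Lemma \ref{L3.1}(i). Vitali on $B_\alpha$ combined with the tail estimate then gives (ii).

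The main obstacle is the tail estimate in (ii): it is specific to the noncompact, infinite-volume setting of $\bn$, and the delicate interplay between the radial decay \eqref{radialestimate}, the quadratic vanishing of $e^{\la t^2}-1$ at zero, and the positive growth order $a$ from (C1) is needed simultaneously to secure uniform integrability near $|x|=1$; without any one of these three ingredients the argument breaks down.
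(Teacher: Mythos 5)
Your overall strategy (truncation in $t$ combined with the radial estimate \eqref{radialestimate} near $|x|=1$) is the same as the paper's, and two of your three pieces are sound: part (i) is the paper's truncation argument recast as Vitali's theorem, and your tail estimate in (ii) is a minor variant of the paper's bound $F(x,u_k)\leq C(1-|x|^2)^{3/2}$, which already suffices because $(1-|x|^2)^{3/2}\,dv_g = 4(1-|x|^2)^{-1/2}\,dx$ is integrable on the annulus. (Your extra factor $|u_k|^a$ is harmless but unnecessary, so your closing claim that the positive exponent $a$ from (C1) is indispensable for the tail is not accurate; boundedness of $h$ near $t=0$ and the quadratic vanishing of $e^{\la t^2}-1$ are what matter there.)

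The genuine gap is in the interior part of (ii). You assert $\int_E |u_k| f(x,|u_k|)\,dv_g \leq C\bigl(\int_E dv_g\bigr)^{1/2}$ with $C$ uniform in $k$, citing the $L^p$ bounds of Lemma \ref{L3.1}(i). But that lemma only gives $f(x,u)\in L^p$ for each \emph{fixed} $u$: its proof controls $\int |f(x,u)|^p$ by $\int (e^{p(\la+\epsilon)u^2}-1)\,dv_g$, and by Moser--Trudinger this is uniformly bounded over a sequence only when $p(\la+\epsilon)\|u_k\|^2\leq 4\pi$, not for an arbitrary bounded sequence in $H^1_R(\bn)$. More fundamentally, uniform integrability of $u_kf(x,u_k)$ on compacta cannot follow from the stated hypotheses: together with your tail estimate it would yield $\int_{\bn} f(x,u_k)u_k\,dv_g \to \int_{\bn} f(x,u)u\,dv_g$ for every such sequence, which is precisely the concentration failure that Propositions \ref{L3.4} and \ref{L3.6} (and the Lions-type Lemma \ref{L3.5}) exist to handle under additional assumptions. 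The repair is immediate and is what the paper does: invoke (C3), $F(x,u_k)=F(x,|u_k|)\leq M_1\bigl(g(x)+f(x,|u_k|)\bigr)$ with $g\in L^1(\bn,dv_g)$ fixed, so the uniform absolute continuity of $\{f(x,|u_k|)\}$ on $\{|x|<\alpha\}$ that you already established in part (i) transfers to $\{F(x,u_k)\}$, and Vitali (equivalently, generalized dominated convergence) closes the argument.
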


\begin{proof}
 \item[(i)] Fix $\alpha >0,$ then we have  
\[
 \int_{\{|x| < \alpha \} \cap \{|u_k| > N\}} f(x,|u_k|) \ dv_{g} \leq 
                        \frac{1}{N} \int_{\bn} f(x,|u_k|)|u_k| dv_{g} \leq \frac{C}{N}.
\]

Therefore
\begin{align*}
  \int_{\{|x| < \alpha \}} f(x,|u_k|) \ dv_{g} &= \int_{\{|x| < \alpha \} \cap \{|u_k| \leq N\}} f(x,|u_k|) \ dv_{g} 
                                                  + \int_{\{|x| < \alpha \} \cap \{|u_k| > N\}} f(x,|u_k|) \ dv_{g} \\
                                           &=\int_{\{|x| < \alpha \} \cap \{|u_k| \leq N\}} f(x,|u_k|) \ dv_{g} + O\Big(\frac{1}{N}\Big).
\end{align*}
Hence using dominated convergence theorem, letting $k \rightarrow \infty$ followed by $ N \rightarrow \infty$ we have

\begin{equation}\label{p11}
 \lim_{k \rightarrow \infty} \int_{\{|x| < \alpha \}} f(x,|u_k|) \ dv_{g} = \int_{\{|x| < \alpha \}} f(x,|u|) \ dv_{g}. 
\end{equation}


\item[(ii)] Fix some $\alpha \in (0,1)$ close to $1.$ Since $u_k \rightharpoonup u$ in $H^{1}_{R}(\bn)$ we have 
$\displaystyle{\sup_k} \  ||u_k|| \leq C < +\infty.$ So by radial estimate \eqref{radialestimate}, 
\begin{align*}
 \sup_k \ |u_k(x)| \leq C(1 - |x|^2)^{\frac{1}{2}} \ \mbox{for} \ |x| > \alpha.
\end{align*}

So that
\begin{equation} \label{3.12}
 F(x,u_k) \leq C(1 - |x|^2)^{\frac{3}{2}} \ \mbox{for} \ |x| > \alpha.
\end{equation}
Since $(1 - |x|^2)^{\frac{3}{2}} \in L^1(\{|x| > \alpha\}, dv_{g})$ by dominated convergence theorem we get
\begin{equation} \label{3.13}
 \lim_{k \rightarrow +\infty} \int_{\{|x| > \alpha\}} F(x,u_k) \ dv_{g} = \int_{\{|x| > \alpha\}} F(x,u) \ dv_{g}. 
\end{equation}
For $\{|x| < \alpha\},$ we can use (C3) and \eqref{p11} to conclude
\begin{equation} \label{3.14}
 \lim_{k \rightarrow +\infty} \int_{\{|x| < \alpha\}} F(x,u_k) \ dv_{g} = \int_{\{|x| < \alpha\}} F(x,u) \ dv_{g}. 
\end{equation}
So \eqref{3.13} and \eqref{3.14} together proves the lemma. 
\end{proof}


\begin{prop} \label{L3.4}
 Let $\{u_k\}$ and $\{v_k\}$ be bounded sequences in $H^{1}_{R}(\bn)$ converging weakly  to 
 $u$ and $v$ respectively. Further assume that 
 \begin{align*}
  \sup_k \ ||u_k||^2 < \frac{4\pi}{\la}.
 \end{align*}
 
 then for every $l \geq 2,$
 \begin{equation} \label{3.15}
  \lim_{k \rightarrow +\infty} \int_{\bn} \frac{f(x,u_k)}{u_k} v^l_k \ dv_{g} = \int_{\bn} \frac{f(x,u)}{u} v^l \ dv_{g}. 
 \end{equation}
 \end{prop}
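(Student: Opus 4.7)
The plan is to split the integral into a compact region and a neighborhood of infinity, and handle each part using a different tool: the radial decay estimate in the tail, and Moser--Trudinger together with compact Sobolev embedding in the interior.

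First I fix $\alpha \in (0,1)$ and write $\bn = \{|x| \le \alpha\} \cup \{|x| > \alpha\}$. On the tail $\{|x|>\alpha\}$, Proposition \ref{eqre} (radial estimate) together with the uniform bound on $\|u_k\|, \|v_k\|$ gives $|u_k(x)|, |v_k(x)| \le C(1-|x|^2)^{1/2}$. In particular, $|u_k|$ is uniformly small there, so (C1) applies and yields $\frac{f(x,u_k)}{u_k} \le C|u_k|^{a+1} \le C(1-|x|^2)^{(a+1)/2}$ uniformly in $k$. Combined with the bound on $v_k^l$, the integrand is dominated by $C(1-|x|^2)^{(a+1+l)/2}$, and since $dv_g = (\tfrac{2}{1-|x|^2})^2 dx$, for $l\ge 2$ and $a>0$ the exponent $(a+l-3)/2>-1$ makes this integrable, with
\[
\int_{\{|x|>\alpha\}}\Bigl|\tfrac{f(x,u_k)}{u_k}v_k^l\Bigr|\,dv_g \longrightarrow 0 \quad \text{as } \alpha \to 1,
\]
uniformly in $k$. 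The same bound applies to the limit integrand, so the tail contributions can be made arbitrarily small by choosing $\alpha$ close to $1$.

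On the interior $\{|x|\le\alpha\}$ I exploit the hypothesis $\sup_k\|u_k\|^2 =: M < \tfrac{4\pi}{\la}$. By (C4), for any $\epsilon>0$ one has $\frac{f(x,u_k)}{u_k} \le C(\epsilon) e^{(\la+\epsilon)u_k^2}$ for $|u_k|$ large and a uniform bound for $|u_k|$ bounded. Choose $\epsilon>0$ so small that $M(\la+\epsilon)/4\pi<1$, and pick $q>1$ with $qM(\la+\epsilon)/4\pi \le 1$. Writing $q(\la+\epsilon)u_k^2 = 4\pi \cdot \tfrac{q(\la+\epsilon)\|u_k\|^2}{4\pi}\cdot (u_k/\|u_k\|)^2$ and using the monotonicity $e^{a t^2}-1 \le e^{t^2}-1$ for $a\le 1$, the hyperbolic Moser--Trudinger inequality \eqref{mtr} yields $\bigl\|\tfrac{f(x,u_k)}{u_k}\bigr\|_{L^q(\bn)} \le C$ uniformly in $k$.

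Finally, since $u_k,v_k \rightharpoonup u,v$ weakly in $H^1_R(\bn)$, the compact Sobolev embedding on the bounded Euclidean ball $\{|x|\le\alpha\}$ gives, up to a subsequence, $u_k \to u$ and $v_k \to v$ a.e. and $v_k^l \to v^l$ in $L^{q'}(\{|x|\le\alpha\})$ with $q'=q/(q-1)$. The uniform $L^q$ bound on $\tfrac{f(x,u_k)}{u_k}$ gives equi-integrability, so by Vitali (with the pointwise convergence $\tfrac{f(x,u_k)}{u_k} \to \tfrac{f(x,u)}{u}$), one gets $\tfrac{f(x,u_k)}{u_k} \to \tfrac{f(x,u)}{u}$ in $L^q(\{|x|\le\alpha\})$. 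H\"older then yields convergence of the interior integral. Combining with the uniform tail smallness, and letting $\alpha\to 1$, completes the proof. The main obstacle is the $L^q$ bound for some $q>1$: this is exactly where the strict inequality $\sup_k\|u_k\|^2<4\pi/\la$ is needed, since equality would only give $L^1$ control and destroy the H\"older pairing with $v_k^l \to v^l$.
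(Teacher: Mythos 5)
Your argument is correct and rests on the same two pillars as the paper's proof: the radial estimate \eqref{radialestimate} combined with (C1) to control the region near the boundary, and the strict bound $\sup_k\|u_k\|^2<\tfrac{4\pi}{\la}$ converted through the Moser--Trudinger inequality into a uniform $L^q$ bound, $q>1$, on $f(x,u_k)/u_k$ that pairs with $v_k^l$ by H\"older (the paper organizes the splitting by level sets of $u_k$, i.e.\ $\{|u_k|>N\}$, $\{|u_k|\le\delta\}$ and the middle range, finishing with dominated convergence rather than your spatial cut-off plus compact embedding and Vitali, but the two bookkeepings are interchangeable here because the radial estimate confines $\{|u_k|>\delta\}$ to a fixed compact set). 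The only imprecision is your claim that the $L^q$ bound holds on all of $\bn$: where $|u_k|$ is small one only gets $f(x,u_k)/u_k=O(|u_k|^{a+1})$, and since $H^1(\bn)$ does not embed into $L^s(\bn,dv_g)$ for $s<2$ this need not be $q$-integrable over the infinite-volume space when $a$ is small and $q$ is close to $1$; this is harmless, however, because you only invoke the bound on the finite-volume set $\{|x|\le\alpha\}$.
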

 
\begin{proof}
 Fix $\delta > 0,$ since $v_k$ converges weakly to $v$ we have, 
 $\displaystyle{\sup_k} \ ||v_k|| < +\infty.$ Let $\tilde C^2 > \frac{4\pi}{\la}$ be such that
 \begin{align*}
  \sup_k  \ ||v_k|| \leq \tilde C.
 \end{align*}

 Define $\alpha = \frac{1}{\tilde C}\left[\sqrt{\tilde C^2 + 
\big(\frac{2\pi \delta^2}{\tilde C}\big)^2} - \frac{2\pi \delta^2}{\tilde C} \right],$
then by radial estimate it holds 
\begin{equation} \label{3.16}
\sup_k |u_k(x)| \leq \delta, \ \mbox{whenever} \ |x| > \alpha, 
\end{equation}
and 
\begin{equation} \label{3.17}
 \sup_k \ |v_k(x)| \leq \frac{\tilde C}{\sqrt{4\pi \alpha}} (1 - |x|^2)^{\frac{1}{2}}, \ \mbox{whenever} |x| > \alpha.
\end{equation}
Now since $\displaystyle{\sup_k} \ 
 ||u_k||^2 < \frac{4\pi}{\la},$ we can choose $\epsilon > 0$ sufficiently small and $p > 1$ such that
\begin{equation} \label{3.18}
 (\la + \epsilon)p ||u_k||^2 < 4\pi, \ \mbox{for all} \ k.
\end{equation}
By (C4), there exists $N_0 > 0$ such that for all $t \geq N_0,$ 
\begin{align*}
 h(x,t) \leq Ce^{\epsilon t^2}, \ \mbox{for all} \ x.
\end{align*}

Therefore for all $N \geq N_0,$ it holds
\begin{align} \label{3.19}
 \int_{\{|u_k| > N\}} |f(x,u_k)|^p \ dv_{g} &= \int_{\{|u_k| > N\}} |h(x,u_k)|^p (e^{\la u^2_k} - 1)^p \ dv_{g} \notag \\
                                        &\leq C\int_{\{|u_k| > N\}} e^{\epsilon p u^2_k}(e^{\la p u^2_k} - 1) \ dv_{g} \notag \\
                                        &\leq C\int_{\{|u_k| > N\}} (e^{(\la + \epsilon)p u^2_k} - 1) \ dv_{g} \notag \\
                                        &\leq C\int_{\bn} (e^{(\la + \epsilon)p||u_k||^2 \left(\frac{u_k}{||u_k||}\right)^2} - 1) \ dv_{g}, \notag \\
                                        &\leq C_1.
\end{align}
Let $q$ be the conjugate exponent of $p$, then
\begin{equation} \label{3.20}
\sup_k \int_{\bn} |v_k|^{lq} \ dv_{g} \leq C \sup_k ||v_k||^{lq} \leq C(l,q). 
\end{equation}
By \eqref{3.19}, \eqref{3.20} and H\"older's inequality we have
\begin{equation} \label{3.21}
 \int_{\{|u_k| > N \}} \frac{f(x,u_k)}{u_k} v^l_k \ dv_{g} = O\left(\frac{1}{N}\right).
\end{equation}
Now using \eqref{3.16} and \eqref{3.17} we get
\begin{align} \label{3.22}
 |\int_{\{|u_k| \leq \delta \}} \frac{f(x,u_k)}{u_k}v^l_k \ dv_{g}| 
                 &\leq \int_{\{|u_k| \leq \delta \}} |h(x,u_k)|\frac{(e^{\la u^2_k} - 1)}{|u_k|} |v^l_k| \ dv_{g} \notag \\
                 &\leq \int_{\{|u_k| \leq \delta \}} |h(x,u_k)||u_k|\frac{(e^{\la u^2_k} - 1)}{|u^2_k|} |v^l_k| \ dv_{g} \notag \\
                 &\leq C\int_{\{|u_k| \leq \delta \}} |u_k| |v^l_k| \ dv_{g} \notag \\
                 &\leq C\int_{\{|x| > \alpha \}} (1 - |x|^2)^{\frac{1 + l}{2}} \ dv_{g} + O(\delta), \notag \\
                 &= \circ(1) \ \mbox{as} \ \delta \rightarrow 0.
\end{align}
From \eqref{3.21}, \eqref{3.22} we get
\begin{equation} \label{3.23}
 \int_{\bn} \frac{f(x,u_k)}{u_k} v^l_k \ dv_{g} = \int_{\{\delta \leq |u_k| \leq N\}} \frac{f(x,u_k)}{u_k} v^l_k \ dv_{g}
                                                   + O\left(\frac{1}{N}\right) + O(\delta).
\end{equation}
Now the proof follows by dominated convergence theorem and thereafter tending $N \rightarrow \infty, \delta
\rightarrow 0.$
\end{proof}

\begin{rem}
 By taking $v_{k} = u_{k}$ and $l = 2,$ we see that, if $\displaystyle{\sup_{k}} \ ||u_{k}||^2 < \frac{4 \pi}{\lambda}$ then
\[
 \int_{\bn} f(x,u_{k}) u_{k} \ dv_{g} \rightarrow \int_{\bn} f(x,u) u \ dv_{g}.
\]
In general it is difficult to prove $\displaystyle{\sup_{k}} \ ||u_{k}||^2 < \frac{4 \pi}{\lambda}$ from the functional itself. 
However we need this compactness criterion in order to get existence of minimizer on $\mathcal{M}$ and hence 
a solution of Eq. \eqref{main}.
\end{rem}
Next we will investigate under which circumstances we can pass the limit without the condition mentioned in the above remark. 
We use the  hyperbolic version of  P.L. Lions Lemma  \ref{L3.5} in 
order to give an affirmative answer on passing the limit.

\begin{prop} \label{L3.6}
Let $\{u_k\}$ be a sequence converging weakly to a non-zero
function u in $H^{1}_{R}(\bn)$ , and assume that :\\

\item[(i)] there exists $c \in (0, \frac{2\pi}{\la})$ such that $J_{\lambda}(u_k) \rightarrow c.$\\

\item[(ii)] $||u||^2 \geq \int_{\bn} f(x,u)u \ dv_{g}.$\\

\item[(iii)] $\displaystyle{\sup_k} \ \int_{\bn} f(x, u_k)u_k \ dv_{g} < +\infty.$\\

Then
\begin{align*}
 \lim_{k \rightarrow \infty} \int_{\bn} f(x, u_k)u_k \ dv_{g} = \int_{\bn} f(x, u)u \ dv_{g}.
\end{align*}

\end{prop}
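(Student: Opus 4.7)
The plan is to translate the assumption $c < 2\pi/\lambda$ into a bound of the form $\lim_k \|u_k\|^2 - \|u\|^2 < 4\pi/\lambda$, feed this into the hyperbolic Lions lemma (Lemma \ref{L3.5}) to gain an exponential integrability strictly better than what the critical threshold $\lambda$ requires, and then run the argument of Proposition \ref{L3.4} (with $v_k = u_k$, $l=2$) with this improved integrability in place of the hypothesis $\sup_k \|u_k\|^2 < 4\pi/\lambda$ used there.

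\emph{Step 1: Identify $\lim_k \|u_k\|^2$ and control its gap to $\|u\|^2$.} From (iii) and (C2) one has $\int_{\bn} F(x,u_k)\,dv_g \leq \tfrac12 \int_{\bn} f(x,u_k)u_k\,dv_g \leq C$, so the relation $J_\lambda(u_k) \to c$ forces $\|u_k\|$ bounded. Passing to a subsequence, $\|u_k\|^2 \to a^2$. Proposition \ref{L3.3} (applicable thanks to (iii)) gives $\int_{\bn} F(x,u_k)\,dv_g \to \int_{\bn} F(x,u)\,dv_g$, and therefore
\[
a^2 = 2c + 2\int_{\bn} F(x,u)\,dv_g.
\]
Combining hypothesis (ii) with the inequality $f(x,t)t \geq 2F(x,t)$ from (C2) yields $\|u\|^2 \geq 2\int_{\bn} F(x,u)\,dv_g$, hence
\[
a^2 - \|u\|^2 \;\leq\; 2c \;<\; \frac{4\pi}{\lambda}.
\]

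\emph{Step 2: Improved exponential integrability via the Lions lemma.} Since $u \not\equiv 0$, $a > 0$. Writing $v_k := u_k/\|u_k\|$, we have $\|v_k\|=1$ and $v_k \rightharpoonup u/a$ with $\|u/a\|^2 = \|u\|^2/a^2 < 1$. Applying Lemma \ref{L3.5} with any $p < a^2/(a^2-\|u\|^2)$ gives $\sup_k \int_{\bn}(e^{4\pi p v_k^2}-1)\,dv_g < \infty$. For all large $k$ we have $\|u_k\|^2 \leq a^2 + \eta$, so letting $\beta := 4\pi p/(a^2+\eta)$ we deduce
\[
\sup_k \int_{\bn}(e^{\beta u_k^2}-1)\,dv_g < \infty.
\]
Choosing $p$ close to $a^2/(a^2-\|u\|^2)$ and $\eta$ small, we may take $\beta$ arbitrarily close to $4\pi/(a^2-\|u\|^2)$, which by Step 1 exceeds $\lambda$. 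Fix such a $\beta > \lambda$.

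\emph{Step 3: Run the L3.4 argument with the improved bound.} I would now follow the proof of Proposition \ref{L3.4} with $v_k = u_k$ and $l=2$. The only place where $\sup_k \|u_k\|^2 < 4\pi/\lambda$ was used is the estimate $\sup_k \int_{\{|u_k|>N\}} |f(x,u_k)|^p\,dv_g < \infty$ for some $p > 1$. Pick $\varepsilon > 0$ small and $p > 1$ with $(\lambda+\varepsilon)p \leq \beta$. By (C4), $h(x,t) \leq C e^{\varepsilon t^2}$ for $|t| \geq N_0$, so using $(e^t-1)^p \leq e^{pt}-1$ for $t \geq 0$,
\[
\int_{\{|u_k|>N_0\}} |f(x,u_k)|^p\,dv_g \;\leq\; C\int_{\bn}(e^{(\lambda+\varepsilon)p u_k^2}-1)\,dv_g \;\leq\; C\int_{\bn}(e^{\beta u_k^2}-1)\,dv_g \;\leq\; C.
\]
With this replacement, the remaining splitting across $\{|u_k|\leq \delta\}$, $\{\delta < |u_k| \leq N\}$, $\{|u_k|>N\}$ in the proof of Proposition \ref{L3.4} proceeds verbatim, using the radial decay estimate \eqref{radialestimate} exactly as there, and yields the desired convergence $\int_{\bn} f(x,u_k)u_k\,dv_g \to \int_{\bn} f(x,u)u\,dv_g$.

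The main obstacle is the quantitative gap estimate in Step 1: it is the only place where all three hypotheses (i), (ii), (iii) enter simultaneously, and it is essential that the strict inequality $c < 2\pi/\lambda$ survive after subtracting the hypothesis (ii) term, so that the Lions exponent $4\pi/(a^2-\|u\|^2)$ strictly beats the critical threshold $\lambda$. Steps 2 and 3 are then routine given this quantitative slack.
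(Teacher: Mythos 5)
Your proof is correct and follows essentially the same route as the paper: the paper's proof invokes the argument of [ADI, Lemma 3.3] together with the hyperbolic Lions lemma to obtain $\sup_k\int_{\bn}(e^{(1+\epsilon)\lambda u_k^2}-1)\,dv_{g}<\infty$, which is exactly the content of your Steps 1--2, and then performs the same three-region splitting with the radial decay estimate and dominated convergence. The only cosmetic difference is in the tail estimate: the paper bounds $\int_{\{|u_k|>N\}}f(x,u_k)u_k\,dv_{g}$ by $Ce^{-\frac{1}{2}\lambda\epsilon N^2}$ using $\sup h(x,t)te^{-\frac{1}{2}\lambda\epsilon t^2}<\infty$ from (C4), whereas you get $O(1/N)$ via H\"older as in Proposition \ref{L3.4}; both work.
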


\begin{proof}

 Arguing as in the proof of ( \cite{ADI}, Lemma 3.3 ) and using hyperbolic version of P.L.Lions Lemma \ref{L3.5}, we
get
 \begin{align*}
  \sup_k \int_{\bn} (e^{(1 + \epsilon)\la u^2_k} - 1) \ dv_{g} < +\infty.
 \end{align*}

 By (C4), we can assume 
 \begin{align*}
  M_2 = \sup h(x,t)t e^{-\frac{1}{2}\la \epsilon t^2} < +\infty,
 \end{align*}
 so that
 \begin{align} \label{3.26}
  \int_{\{|u_k| > N\}} f(x,u_k)u_k \ dv_{g} &= \int_{\{|u_k| > N\}} h(x,u_k)u_k (e^{\la
u^2_k} - 1) \ dv_{g} \notag \\
                                    & \leq  \int_{\{|u_k| > N\}} (h(x,u_k)u_k e^{-\la
\epsilon u^2_k})(e^{(1 + \epsilon) \la u^2_k} -
1) \ dv_{g} \notag \\
                                    &\leq M_2 e^{-\frac{1}{2} \la \epsilon N^2}
\int_{\bn}  (e^{(1 + \epsilon)\la u^2_k} - 1) \
dv_{g} \notag \\
                                    &\leq C e^{-\frac{1}{2} \la \epsilon N^2},
 \end{align}
 holds.\\
Fix $\delta > 0$ and let $\tilde C$ be such that $\displaystyle{\sup_k} \  ||u_k|| \leq \tilde C$ and
let $\alpha$ depending on $\tilde C$ be as before. Then 
$\alpha = 1 - O(\delta)$ as $\delta \rightarrow 0,$ and there holds
\begin{align*}
 |u_k(x)| \leq  C (1 - |x|^2)^{\frac{1}{2}}, \ \mbox{whenever} \ |x| > \alpha.
\end{align*}

By using above we have

 \begin{align} \label{3.27}
 \int_{\{|u_k| \leq \delta\}} f(x,u_k)u_k \ dv_{g} &\leq \int_{\{|u_k| \leq \delta\}}
h(x,u_k)u_k(e^{\la u^2_k} - 1) \ dv_{g} \notag \\
                                               &\leq C\int_{\{|x| > \alpha\}} |u_k|^3
\ dv_{g} + O(\delta) \notag \\
                                               &\leq C\int_{\{|x| > \alpha\}}
(1 - |x|^2)^{\frac{3}{2}} \
dv_{g} + O(\delta) \notag \\
                                               &\leq C(1 - \alpha^2)^{\frac{1}{2}} + O(\delta),
\notag \\
                                               &= O(\delta).
\end{align}
Thus from \eqref{3.26} and \eqref{3.27} we get
\begin{align*}
 \int_{\bn} f(x,u_k) u_k \ dv_{g} = \int_{\{\delta \leq |u_k| \leq N\}} f(x,u_k) u_k \
dv_{g} + O(e^{-\frac{1}{2} \la \epsilon N^2}) + O(\delta).
\end{align*}

So the lemma follows by tending $k \rightarrow \infty$ and then tending $N
\rightarrow \infty, \delta \rightarrow 0$ successively.
\end{proof}

\begin{lem} \label{L3.7}
 Let $f(x,t) = h(x,t)(e^{\la t^2} - 1)$ be a function of critical growth. Fix $0 <
R_0 < 1$ and $0 < l_0 < R_0.$  Define 
\begin{align*}
 h_{0,l_{0}}(t) := \inf_{x \in B_{l_0}(0)} h(x,t), \ \mbox{and} \ M_0 := \sup_{t \geq 0} h_{0,l_{0}}(t)t
\end{align*}

and
\begin{align*}
k_0 = 
\begin{cases}
\frac{2}{M_0(R^2_0 - l^2_0)} \ \mbox{if} \ M_0 < + \infty, \\ 
0 \ \ \ \ \ \ \ \ \ \ \ \mbox{if} \ M_0 = + \infty .
\end{cases}
\end{align*}
Let $a \geq 0$ be such that 
\begin{align*}
 \sup_{||u|| \leq 1} \int_{\bn} f(x,au)u \ dv_{g} \leq a,
\end{align*}
if $\frac{k_0}{\la} < 1,$ then $a^2 < \frac{4 \pi}{\la}.$
\end{lem}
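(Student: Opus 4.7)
The plan is to argue by contradiction: assume $a^2\geq 4\pi/\lambda$ and produce a test function $u$ with $\|u\|\leq 1$ for which $\int_{\bn}f(x,au)u\,dv_g>a$, violating the hypothesis. The natural candidates are the Moser functions $m_{l,R_0}$ of Section 2, which satisfy $\|m_{l,R_0}\|=1$ and equal the constant $c_l:=(2\pi)^{-1/2}\sqrt{\log(R_0/l)}$ on the inner ball $\{|x|<l\}\subset B_{l_0}(0)$ (valid for $l\leq l_0$). Computing $|B_l|_g=4\pi l^2/(1-l^2)$ and bounding $h(\cdot,ac_l)\geq h_{0,l_0}(ac_l)$ on $B_{l_0}$ gives
\begin{equation*}
\int_{\bn}f(x,am_{l,R_0})m_{l,R_0}\,dv_g\;\geq\; c_l\,h_{0,l_0}(ac_l)\,(e^{\lambda a^2 c_l^2}-1)\cdot\frac{4\pi l^2}{1-l^2},
\end{equation*}
and since $\lambda a^2 c_l^2=\frac{\lambda a^2}{2\pi}\log(R_0/l)\geq 2\log(R_0/l)$ under our standing assumption, $e^{\lambda a^2 c_l^2}-1\geq (R_0^2-l^2)/l^2$. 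Multiplying through by $a$ produces, for every $l\in(0,l_0]$, the key estimate
\begin{equation}\label{eq:star-plan}
a^2\;\geq\; \frac{4\pi(ac_l)\,h_{0,l_0}(ac_l)(R_0^2-l^2)}{1-l^2}.
\end{equation}

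In the case $M_0=+\infty$, condition (C1) bounds $h$ on compact $t$-intervals, so $M_0=+\infty$ forces a sequence $t_n\to\infty$ with $t_n h_{0,l_0}(t_n)\to\infty$. Setting $l_n=R_0 e^{-2\pi t_n^2/a^2}\in(0,l_0]$ for $n$ large, we have $ac_{l_n}=t_n$ and the right-hand side of \eqref{eq:star-plan} diverges along this sequence, contradicting finite $a$. For $M_0<\infty$ the hypothesis $k_0/\lambda<1$ translates to $M_0\lambda(R_0^2-l_0^2)>2$. By (C2), $f(x,t)/t$ is strictly increasing in $t$, hence so is $f_{0,l_0}(t)/t=h_{0,l_0}(t)(e^{\lambda t^2}-1)/t$ (an infimum over $x\in B_{l_0}$ of increasing functions). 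Comparing $f_{0,l_0}(\cdot)/(\cdot)$ at a near-maximizer $t^*$ of $s\mapsto s\,h_{0,l_0}(s)$ and at $ac_l$, together with $e^x-1\geq x$ applied to the smaller argument, yields a lower bound of the form $(ac_l)\,h_{0,l_0}(ac_l)\geq \kappa(l;R_0,l_0)\,M_0$ with an explicit positive factor $\kappa$. Substituting into \eqref{eq:star-plan}, optimizing over $l\in(0,l_0]$, and invoking $M_0\lambda(R_0^2-l_0^2)>2$ pushes the right-hand side strictly above $4\pi/\lambda\leq a^2$, so the strict inequality $\int>a$ is achieved at an appropriate $l$, yielding the contradiction.

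The principal obstacle lies in the quantitative balancing in the $M_0<\infty$ regime: tracking the loss factor $\kappa(l)$ from the monotonicity comparison, which is small when $l$ is far from the value realizing the true maximizer of $s\,h_{0,l_0}(s)$, against the geometric factor $(R_0^2-l^2)/(1-l^2)$, close to $R_0^2$ for $l\to 0$, and showing that the $k_0/\lambda<1$ margin is enough to overcome both. If the maximizer $t^*$ of $s\,h_{0,l_0}(s)$ lies below $ac_{l_0}$, so that no $l\in(0,l_0]$ yields $ac_l=t^*$ directly, one augments the family with Moser functions $m_{l,R_0}$ for $l>l_0$, integrated over $B_{l_0}\subset B_l$ where $h\geq h_{0,l_0}$ still applies. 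The boundary case $\lambda a^2=4\pi$ is handled exactly by this balancing step, while the open case $\lambda a^2>4\pi$ is eliminated by an asymptotic analysis of \eqref{eq:star-plan} as $l\to 0$, using $h_{0,l_0}(t)\geq e^{-\varepsilon t^2}$ from (C4) to see that the exponent $2-(\lambda-\varepsilon)a^2/(2\pi)$ on $l$ is negative and hence the right-hand side diverges.
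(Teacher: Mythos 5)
Your approach is the paper's: plug the Moser functions $a\,m_{l,R_0}$ into the defining inequality, restrict the integral to the inner ball, and use $e^{\lambda a^2 c_l^2}=(R_0/l)^{2}$ at the borderline value $\lambda a^2=4\pi$. Your key estimate agrees with the paper's display (3.28) up to constants. The case $\lambda a^2>4\pi$ is disposed of in the paper simply by quoting Lemma \ref{L3.2}, but your direct asymptotic argument also works; and your treatment of $M_0=+\infty$ is actually more careful than the paper's, since you correctly note that the family $\{ac_l: 0<l\le l_0\}$ only reaches the values $t\ge ac_{l_0}$ and that (C1) plus $M_0=+\infty$ supplies a divergent sequence $t_n\,h_{0,l_0}(t_n)\to\infty$ with $t_n\to\infty$ inside that range.

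The genuine gap is the $M_0<+\infty$ branch: the ``quantitative balancing'' you defer does not close. Carry out your own plan: when the near-maximizer $t^*$ of $s\,h_{0,l_0}(s)$ lies below $ac_l$, monotonicity of $f_{0,l_0}(t)/t$ together with $e^x-1\ge x$ gives $(ac_l)h_{0,l_0}(ac_l)\bigl(e^{\lambda a^2c_l^2}-1\bigr)\ge \lambda (ac_l)^2 M_0(1-\delta)$, and inserting this into the volume bound yields only $\lambda M_0\le (1-l^2)/\bigl(2(1-\delta)l^2\log (R_0/l)\bigr)$, i.e.\ your loss factor is $\kappa(l)\asymp l^2\log(R_0/l)/(R_0^2-l^2)$. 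To contradict $\lambda M_0(R_0^2-l_0^2)>2$ you would need roughly $4l_0^2\log(R_0/l_0)\ge (1-l_0^2)(R_0^2-l_0^2)$, which fails outright when $l_0$ is small relative to $R_0$ (try $R_0=0.9$, $l_0=0.1$); the fixed factor-of-two margin in the hypothesis cannot absorb an arbitrarily small $\kappa$. The auxiliary family $l>l_0$ does not rescue this: integrating only over $B_{l_0}\subset B_l$ costs $l_0^2/l^2$ while $R_0^2-l^2\to 0$ as $ac_l$ decreases, so again no contradiction for small $t^*$. What the Moser family really proves is $h_{0,l_0}(t)t\le (1-l_0^2)/(\lambda(R_0^2-l_0^2))$ \emph{for $t\ge ac_{l_0}$ only}, so the contradiction with $k_0<\lambda$ is available exactly when $M_0$ is approached on $[ac_{l_0},\infty)$ --- automatic when $M_0=+\infty$, which is the only case the paper ever uses (via \eqref{PSlavel}); note the paper's own final step ``$\ge 2\pi M_0(R_0^2-l_0^2)$'' silently makes the same assumption. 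Monotonicity of $f(x,t)/t$ is the wrong tool for transferring the value of $M_0$ from small $t$ to that range, because $h(x,t)t=(f(x,t)/t)\cdot t^2/(e^{\lambda t^2}-1)$ is the product of an increasing and a rapidly decreasing factor.
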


\begin{proof}
 From Lemma \ref{L3.2}, we have $a^2 \leq \frac{4 \pi}{\la}.$ Suppose if possible $a^2 = \frac{4 \pi}{\la}.$
 
Let $m_{l,R_{0}}$  be the Moser function defined in section 2. Then  $m_{l, R_{0}}$  is constant in
$\{|x| < l\}.$ Let $t = a m_{l,R_{0}}$ when $|x| < l,$ and
\begin{align*}
 1 \leq \frac{1}{(1 - |x|^2)^2} \leq \frac{1}{(1 - l^2)^2} \ \mbox{for} \ 0 < |x| < l,
\end{align*}
then we have
\begin{align} \label{3.28}
 a^2 &\geq \int_{\bn} f(x,a m_{l,R_{0}})(a m_{l,R_{0}}) \ dv_{g} \notag \\
     &\geq \int_{\{|x| < l\}} f(x,t)t \ dv_{g} \notag \\
     &\geq \int_{\{|x| < l\}} h(x,t)(e^{\la t^2} - 1)t \ dx, \notag \\
     &\geq 2 \pi h_{0,l_{0}}(t)t(e^{\la t^2} - 1)l^2. 
\end{align}
Now by our assumption $a^2 = \frac{4 \pi}{\la},$  using $e^{\la t^2} = \frac{R^2_0}{l^2}$ and
\eqref{3.28} we get
\begin{align*}
 \frac{4 \pi}{\la} &\geq 2 \pi h_{0,l_{0}}(t)t(R^2_0 - l^2)  \\
                   &\geq 2 \pi h_{0,l_{0}}(t)t(R^2_0 - l^2_0), \\
                   &\geq 2 \pi M_0 (R^2_0 - l^2_0).
\end{align*}
This gives $\la \leq \frac{2}{M_0(R^2_0 - l^2_0)} = k_0,$ a contradiction. Hence we must have $a^2 < \frac{4 \pi}{\la}.$ 

\end{proof}

Now we can prove Theorem \ref{PS}.

{\bf{Proof of Theorem \ref{PS}} :}
\begin{proof}
 Let $\{u_k\}$ be a sequence in $H^1_R(\bn)$ such that 
 \begin{align}
  \displaystyle{\lim_{k \rightarrow +\infty}} \ J_{\lambda}(u_k) &= c, \notag \\
  \displaystyle{\lim_{k \rightarrow +\infty}} \ J^{\prime}_{\lambda}(u_k) &= 0,    
 \end{align}
for some $c \in (0, \frac{2 \pi}{\la}).$\\

{\bf{Claim :}} $\{u_{k} \}$ is a bounded sequence in $H^1_R(\bn)$. \\

{\bf{Proof of Claim :}}
As $J_{\lambda}(u_k) \rightarrow c$ and $J^{\prime}_{\lambda}(u_k) \rightarrow 0, $ we have
\begin{align*}
 J_{\lambda}(u_k) \leq M_0 \ \mbox{and} \ \langle J^{\prime}_{\lambda}(u_k),u_k \rangle \leq M_0(1 + ||u_k||) 
\ \mbox{for all}\  k, 
\end{align*}
where $M_0 > 0$ is a constant. Also

\begin{align} \label{3.32}
 J_{\lambda}(u_k) - \frac{1}{2} \langle J^{\prime}_{\lambda}(u_k),u_k \rangle = I_{\lambda}(u_k).
\end{align} 
Which gives  $I_{\lambda}(u_k) \leq M_1(1 + ||u_k||)$ and hence form (ii) of Lemma \ref{L3.1}, we have, for $\epsilon >$
small,
\begin{align} \label{3.33}
 \int_{\bn} f(x,u_k)u_k \ dv_{g} \leq M_2(\epsilon)(1 + ||u_k||) + \epsilon ||u_k||^2.
\end{align}
Therefore from (C2) it follows that
\begin{align*}
 \int_{\bn} F(x,u_k) \ dv_{g} \leq C_0(\epsilon)(1 + ||u_k||) + \tilde C \epsilon ||u_k||^2.
\end{align*}
Now using the boundedness of $J_{\lambda}(u_k),$ and choosing $\epsilon$ such that $1 - \tilde C \epsilon > 0$ we have
\[
(1 - \tilde C \epsilon) ||u_k||^2 \leq C_0(1 + ||u_k||),
\]
and it proves  $||u_k||$'s are bounded. This proves the Claim. We also infer from \eqref{3.33} that

\begin{align} \label{3.34}
 \sup_k \int_{\bn} f(x,u_k)u_k \ dv_{g} < +\infty.
\end{align}

By extracting a subsequence (if necessary) we may assume that $u_k$ converges to a function $u \in H^1_R(\bn)$ weakly. Now we shall consider the following two cases:\\

{\bf{Case(i):}} \  $c \leq 0.$ \\
Using  \eqref{3.32} and (ii) of Lemma \ref{L3.1} we have,
 \begin{align*}
  0 \leq I_{\lambda}(u) &\leq \liminf_{k \rightarrow \infty}  I_{\lambda}(u_k) \\
              & = \liminf_{k \rightarrow \infty} \left\{J_{\lambda}(u_k) - \frac{1}{2} \langle J^{\prime}_{\lambda}(u_k), u_k\rangle\right\} \\
              &= c.
 \end{align*}
It follows that no Palais-Smale sequence exists if $c < 0.$ If $c = 0$ then from (ii) of Proposition \ref{L3.3} we have
\begin{align*}
 \lim_{k \rightarrow \infty} ||u_k||^2 = 
 2\lim_{k \rightarrow \infty} \left\{J_{\lambda}(u_k) + \int_{\bn} F(x,u_k) \ dv_{g} \right\} = 0,
 \end{align*}
and hence $u_k$ converges strongly to $0$ in $H^1_R(\bn).$ \\

{\bf{Case(ii)}} \  $c \in (0, \frac{2 \pi}{\la}).$ \\
First we shall show that $u \not\equiv 0.$ Suppose if possible $u \equiv 0.$
From \eqref{3.34} and (ii) of Proposition \ref{L3.3} we have
\begin{align*}
 \lim_{k \rightarrow +\infty} ||u_k||^2 &= 2 \lim_{k \rightarrow +\infty} \left\{ J_{\lambda}(u_k) + \int_{\bn} F(x,u_k) \ dv_{g} \right\} \\
                                        &=2c < \frac{4\pi}{\la}.
 \end{align*}
It follows that $u_k$ satisfies the hypothesis of Proposition \ref{L3.4} with $v_k = u_k, l = 2$  and hence we have
\begin{align*}
 \lim_{k \rightarrow +\infty} \int_{\bn} f(x,u_k)u_k \ dv_{g} = \int_{\bn} f(x,u)u \ dv_{g} = 0.
\end{align*}
This gives
\begin{align*}
 \lim_{k \rightarrow +\infty} I_{\lambda}(u_k) &= \lim_{k \rightarrow +\infty} 
 \left\{ \frac{1}{2} \int_{\bn} f(x,u_k)u_k \ dv_{g} - \int_{\bn} F(x,u_k) \ dv_{g} \right\} = 0.
\end{align*}
But then from \eqref{3.32} we get
\begin{align*}
 c = \lim_{k \rightarrow +\infty} J_{\lambda}(u_k) = \lim_{k \rightarrow +\infty} \left\{I_{\lambda}(u_k) + \frac{1}{2}\langle J^{\prime}_{\lambda}(u_k), u_k
 \rangle\right\} = 0, 
\end{align*}
which is a contradiction. Hence we must have $u \not\equiv 0.$ By definition of $J^{\prime}_{\lambda}(u)$ and standard density argument it follows
that
\begin{align}
 ||u||^2 = \int_{\bn} f(x,u)u \ dv_{g}.
\end{align}
Now since $u_k$ and $u$ satisfies all the hypothesis of Proposition \ref{L3.6} we have
\begin{align*}
 \lim_{k \rightarrow +\infty} \int_{\bn} f(x,u_k)u_k \ dv_{g} = \int_{\bn} f(x,u)u \ dv_{g} ,
\end{align*}
and hence by lower semi continuity of the norm we obtain
\begin{align*}
 ||u||^2 &\leq \liminf_{k \rightarrow +\infty} ||u_k||^2 ,\\
         &= 2\liminf_{k \rightarrow +\infty} \left\{J_{\lambda}(u_k) + \int_{\bn} F(x,u_k) \ dv_{g} \right\}, \\
         &= 2\liminf_{k \rightarrow +\infty} \left\{I_{\lambda}(u_k) + \frac{1}{2}\langle J^{\prime}_{\lambda}(u_k),u_k\rangle+ \int_{\bn} F(x,u_k) \ dv_{g})\right\},  \\
         &=  \liminf_{k \rightarrow +\infty} \left\{\int_{\bn} f(x,u_k)u_k \ dv_{g} + \langle J^{\prime}_{\lambda}(u_k),u_k\rangle \right\} ,\\
         &= \int_{\bn} f(x,u)u \ dv_{g}, \\
         &= ||u||^2 .
\end{align*}
This implies that $u_k \rightarrow u$ strongly in $H^1_R(\bn)$ and thus completes the proof of (i).

{\bf{Proof of part(ii)}.}

The proof goes in the same line as in \cite{ADI} with obvious modifications, so we will briefly sketch the proof here : \\
{\bf{Step 1:}} $\eta(f) > 0.$ \\

 If possible we assume \  $\eta(f) = 0,$ and let $\{u_k\}$ be a sequence in $\mathcal{M}$ such that $J_{\lambda}(u_k) = I_{\lambda}(u_k)$
converges to $0.$ Then from (ii) of Lemma \ref{L3.1} and proceeding as before, we can assume
\begin{align*}
  \displaystyle{\sup_k} \ ||u_k|| &< +\infty, \\
 \displaystyle{\sup_k} \ \int_{\bn} f(x,u_k)u_k \ dv_g &< +\infty. 
\end{align*}

By extracting a subsequence and using Fatou's lemma and Proposition \ref{L3.3} we can conclude that
\[ \label{100}
 u_k \rightarrow 0 \ \mbox{strongly in} \ H^1_R(\bn).
\]
Whereas considering $v_k = \frac{u_k}{||u_k||}$, we have $v_k$ converges weakly to $v$. Then by 
Proposition \ref{L3.4} and observing that $u_k \in \mathcal{M}$ we get
\begin{align*}
 1 &= \lim_{k \rightarrow \infty} \int_{\bn} \frac{f(x,u_k)}{u_k}v^2_k \ dv_g  \\
   &= \int_{\bn} f^{\prime}(x,0)v^2 \ dv_g = 0.
\end{align*}
This contradiction proves that $\eta(f) > 0.$ \\

Now for the second part we need the following  \\
{\bf{claim:}} For every $u \in H^1_R(\bn)\backslash \{0\}$, there exists a constant $\gamma(u) > 0$ such that 
$\gamma(u)u \in \mathcal{M}.$ In addition if one assume 
\[
 ||u||^2 \leq \int_{\bn} f(x,u)u \ dv_g,
\]
then $\gamma(u) \leq 1,$ and $\gamma(u) = 1$ iff $u \in \mathcal{M}.$ \\
Considering
\[
 \psi(\gamma) = \frac{1}{\gamma} \int_{\bn} f(x,\gamma u)u \ dv_g, \ \mbox{for} \ \gamma >0,
\]
one observes that 
\begin{align*}
\displaystyle{\lim_{\gamma \rightarrow 0}} \psi(\gamma) &= \int_{\bn} f^{\prime}(x,0)u^2 \ dv_g = 0 < ||u||^2, \\
\displaystyle{\lim_{\gamma \rightarrow \infty}} \psi(\gamma) &= \infty.
\end{align*}

So the first part of the claim follows by continuity of $\psi.$ Since by (C2), $\frac{f(x,tu)}{t}u$ is an 
increasing function of $t$, the second part of the claim follows.

{\bf{Step 2:}} $\eta(f)^2 < \frac{4\pi}{\la}.$ \\

In view of \eqref{PSlavel} and Lemma \ref{L3.7} it is enough to prove that
\[
 \displaystyle{\sup_{||u|| \leq 1}} \int_{\bn} f(x, \eta(f) u)u \ dv_g \leq \eta(f).
\]

Let $u \in H^1_R(\bn)$ with $||u|| = 1,$ by the above claim there exists $\gamma(u) > 0$ such that 
$\gamma(u)u \in \mathcal{M}.$
Then 
\[
 \frac{\eta(f)^2}{2} \leq J_{\lambda}(\gamma u) \leq \frac{\gamma^2}{2},
\]
that is $\eta(f) \leq \gamma,$ and hence by monotonicity of $\frac{f(x,tu)}{t}u$ with respect to $t$ we have
\[
\int_{\bn} \frac{f(x, \eta(f) u)}{\eta(f)} u \ dv_g \leq \int_{\bn} \frac{f(x, \gamma u)}{\gamma} u \ dv_g = 1,
\]
and this completes the proof.

\end{proof}

\end{section}

\section{Proof of main theorems}

 In this section we will prove the existence of solutions for the Eq. \eqref{main}. First we state the following
  abstract result :

\begin{lem}\label{abstract}

Let $f$ be a function of critical growth on $\bn.$  \\

\emph{1.} Let $u_{0} \in \mathcal{M}_{1}$ be such that $J^{\prime}_{\lambda}(u_{0}) \not\equiv 0.$ Then 
\[
 J_{\lambda}(u_{0}) > \inf \{ J_{\lambda}(u) : u \in \mathcal{M}_{1} \}.
\]
\ \ \ \emph{2.} Let $u_{1}$ and $u_{2}$ be two non negative linearly independent functions in $H^{1}_{R}(\bn).$ Then there exist 
$p, q \in \mathbb{R}$ such that $p u_{1} + q u_{2} \in \mathcal{M}_{1}.$
\end{lem}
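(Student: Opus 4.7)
My plan has two parts, one for each statement; both rely on the ``Nehari-type'' structure of $\mathcal{M}$ controlled by condition (C2).

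\textbf{For statement 1}, I argue the contrapositive: a minimizer $u_0$ of $J_\la$ on $\mathcal{M}_1$ must satisfy $J_\la'(u_0)=0$. The first observation is the algebraic identity $\langle J_\la'(u_0), u_0^+\rangle = \|u_0^+\|^2 - \int_{\bn} f(x,u_0^+)u_0^+ \, dv_g = 0$, valid because $u_0^+ \in \mathcal{M}$, and likewise $\langle J_\la'(u_0),u_0^-\rangle=0$, using $f(x,-t)=-f(x,t)$. Assume by contradiction $J_\la'(u_0)\neq 0$ and choose $\phi\in H^{1}_R(\bn)$ with $\langle J_\la'(u_0),\phi\rangle=-\eta<0$. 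Introduce the three-parameter family $u(s,a,b)=a(u_0+s\phi)^+ - b(u_0+s\phi)^-$ and the constraints $\beta_1(u)=\|u^+\|^2-\int f(x,u^+)u^+ \, dv_g$, $\beta_2(u)=\|u^-\|^2-\int f(x,u^-)u^- \, dv_g$, so $\mathcal{M}_1=\{\beta_1=0=\beta_2\}$. A direct computation shows that the Jacobian of $(\beta_1,\beta_2)\circ u$ in $(a,b)$ at $(0,1,1)$ is diagonal with entries $\|u_0^\pm\|^2-\int f'(x,u_0^\pm)(u_0^\pm)^2 \, dv_g$, both strictly negative by (C2) (since $f'(t)t>f(t)$ on $\{t>0\}$ gives $\int f'(u_0^\pm)(u_0^\pm)^2 > \int f(u_0^\pm)u_0^\pm = \|u_0^\pm\|^2$). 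The implicit function theorem then yields $a(s),b(s)$ with $a(0)=b(0)=1$ and $u(s,a(s),b(s))\in\mathcal{M}_1$. Differentiating $J_\la\circ u$ at $(0,1,1)$, the $a$- and $b$-partial derivatives vanish because they equal $\|u_0^\pm\|^2-\int f(x,u_0^\pm)u_0^\pm = 0$, leaving
\[
\frac{d}{ds}J_\la(u(s,a(s),b(s)))\Big|_{s=0} = \langle J_\la'(u_0),\phi\rangle = -\eta<0,
\]
contradicting the minimality of $u_0$.

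\textbf{For statement 2}, I look for $p,q>0$ with $v:=pu_1-qu_2\in\mathcal{M}_1$ (the lemma is then proved with the coefficient $-q$ in front of $u_2$). Define the continuous map $\Phi\colon(0,\infty)^2\to\mathbb{R}^2$ by $\Phi(p,q)=(\beta_1(v),\beta_2(v))$; a zero of $\Phi$ is precisely an element of $\mathcal{M}_1$. Linear independence of the non-negative pair $u_1,u_2$ guarantees that $v$ is sign-changing on an open region of $(p,q)$. Condition (C1) gives $\int f(x,v^\pm)v^\pm = o(\|v^\pm\|^2)$ as $v^\pm\to 0$, so $\Phi_1,\Phi_2>0$ for $(p,q)$ small; condition (C4) makes $\int f(x,v^+)v^+$ grow super-exponentially in $\|v^+\|^2$ for $v^+$ large, so $\Phi_1<0$ for $p$ large with $q$ in a bounded range, and symmetrically $\Phi_2<0$ for $q$ large. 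Choosing a rectangle $[p_-,p_+]\times[q_-,q_+]$ on which $\Phi_1$ has opposite signs on the two $p$-edges and $\Phi_2$ on the two $q$-edges, the Poincar\'e--Miranda theorem yields an interior zero.

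\textbf{The hard part} is the non-smooth dependence of $u\mapsto u^\pm$ near the nodal set $\{u_0=0\}$, which affects both the Jacobian computation and the implicit function argument in statement 1. This is typically dealt with by using $\nabla u_0=0$ a.e.~on $\{u_0=0\}$ (Stampacchia) and by restricting the perturbation $\phi$ appropriately; the computation of directional derivatives of $\beta_i$ in the relevant directions then goes through as above. For statement 2, the subtle step is synchronising the four sign conditions on a common rectangle, which requires uniform mass bounds on $v^\pm$ uniformly in the relevant $(p,q)$-range, made possible by the linear independence of $u_1,u_2$ combined with the growth of $f$ at $0$ and $\infty$.
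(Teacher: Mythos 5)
Your overall strategy (Nehari-type constraints $\beta_1,\beta_2$ controlled by (C2), plus a topological existence argument) is the right circle of ideas — the paper itself gives no proof but simply invokes Cerami--Solimini--Struwe \cite{CSS} "with obvious modifications" — but both halves of your argument break down exactly at the points you flag as "the hard part", and the patches you sketch do not repair them.

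For part 1, the implicit function theorem cannot be applied to $(s,a,b)\mapsto\beta_i\bigl(a(u_0+s\phi)^+-b(u_0+s\phi)^-\bigr)$, because $u\mapsto u^{\pm}$ is continuous but not differentiable as a map of $H^1$ into itself; in particular $s\mapsto\|(u_0+s\phi)^+\|^2=\int_{\{u_0+s\phi>0\}}|\nabla(u_0+s\phi)|^2\,dx$ need not be $C^1$ near $s=0$, and the obstruction is the moving nodal set of the \emph{perturbed} function, not the behaviour of $\nabla u_0$ on $\{u_0=0\}$, so Stampacchia's lemma and a restriction on $\phi$ do not help. The computation of the $(a,b)$-Jacobian and the identities $\langle J_\lambda'(u_0),u_0^{\pm}\rangle=0$ are correct, but the $C^1$ hypothesis of the IFT is simply absent. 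The CSS route avoids differentiating the decomposition altogether: one works with the smooth two-parameter family $(a,b)\mapsto au_0^+-bu_0^-$, on which $J_\lambda$ has a strict global maximum at $(1,1)$ by (C2), applies a quantitative deformation lemma (possible since $J_\lambda'(u_0)\neq0$) to push the image of a small square around $(1,1)$ strictly below the level $J_\lambda(u_0)$, and then uses a Miranda/degree argument on that smooth family to show the deformed square still meets $\mathcal{M}_1$.

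For part 2, the Poincar\'e--Miranda rectangle cannot in general be synchronised, and linear independence of $u_1,u_2$ is not enough to save it: if, say, $u_1\leq u_2$ pointwise (which is compatible with linear independence), then $p u_1-qu_2\leq0$ for every $q\geq p$, so $v^+\equiv0$ and $\beta_1(p,q)=0$ on a large part of the edge $\{p=p_-\}$; the required strict sign condition fails and any zero Miranda produces may sit on the degenerate set where $v^{\pm}=0$, which is not in $\mathcal{M}_1$. The standard (CSS) argument is one-dimensional instead: restrict to the open interval of $t$ for which $w_t=tu_1-(1-t)u_2$ has both $w_t^{\pm}\not\equiv0$, let $s_{\pm}(t)$ be the unique positive numbers with $s_{\pm}(t)w_t^{\pm}\in\mathcal{M}$ (existence and uniqueness from the claim in the proof of Theorem \ref{PS} and (C2)), note that $s_+(t)\to\infty$ as $w_t^+\to0$ at one endpoint while $s_-(t)$ stays bounded, and vice versa at the other endpoint, so the intermediate value theorem gives $t_0$ with $s_+(t_0)=s_-(t_0)=:s$; then $sw_{t_0}\in\mathcal{M}_1$ and lies in the span of $u_1,u_2$. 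If you want to keep your two-constraint picture, you must at least restrict the rectangle to the region where $v$ genuinely changes sign and prove the uniform non-degeneracy of $v^{\pm}$ on its boundary, which is precisely what the one-dimensional reduction makes unnecessary.
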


The proof of above lemma follows from the result of Cerami-Solimini-Struwe (see \cite{CSS}) with obvious modifications.

\begin{rem}\label{remark1}
 Part (1) of above lemma holds for functions in $\mathcal{M}$ as well. 
\end{rem}

 {\bf{Proof of Theorem \ref{mt1} :}}

As $J_{\lambda}(u) = J_{\lambda}(|u|)$, it is enough to prove that the minimum is attained on
$\mathcal{M}$ for some nonzero function (thanks to above remark and principle of symmetric criticality). 
Hence we only need to show that there exists $u \in \mathcal{M}$ with $u \not\equiv
0$ such that
\begin{align*}
 J_{\lambda}(u) = \frac{\eta(f)^2}{2}, 
\end{align*}
and also  by part (ii) of Theorem \ref{PS}, we know 
\[
 0 <\eta(f)^2 < \frac{4\pi}{\la}.
\]

Let $\{u_k\}$ be a minimizing sequence. Since $J_{\lambda}=I_{\lambda}$ on $\mathcal{M},$
we have from (ii) of Lemma \ref{L3.1}
\begin{align} \label{4.1}
  \int_{\bn} f(x,u_k)u_k \ dv_g \leq C(1 + I_{\lambda}(u_k)) + \epsilon ||u_k||^2,
\end{align}
and arguing as before, we get
\begin{align} \label{4.2} 
 \sup_k ||u_k|| < +\infty , \\
 \label{4.2.}
 \sup_k \int_{\bn} f(x,u_k)u_k \ dv_g < +\infty.
\end{align}
By extracting a subsequence we can assume that $u_k$ converges to $u$ weakly in
$H^1_R(\bn)$.

{\bf{Claim:}} $u \not\equiv 0$ and $u \in \mathcal{M}.$ \\

{\bf{proof:}} If possible we assume \  $u \equiv 0.$ By \eqref{4.2.} and (ii) of
Proposition \ref{L3.3} we conclude that
\begin{align*}
 \lim_{k \rightarrow \infty} \int_{\bn} F(x,u_k) \ dv_g = 0.
\end{align*}
This gives
\begin{align} \label{4.3}
 \lim_{k \rightarrow \infty} ||u_k||^2 &= 2\lim_{k \rightarrow \infty} \left\{J_{\lambda}(u_k)
+ \int_{\bn} F(x,u_k) \ dv_g\right\} \notag \\
                                       &= \eta(f)^2 \in  (0, \frac{4\pi}{\la}).
\end{align}

Also \eqref{4.3} enables us to use Proposition \ref{L3.4} with $v_k = u_k$ and $l = 2$ to
conclude
\begin{align*}
 \lim_{k \rightarrow \infty} \int_{\bn} f(x,u_k)u_k \ dv_g = 0,
\end{align*}
which is not possible, otherwise this would give
\begin{align*}
 \eta(f)^2 = 2\lim_{k \rightarrow \infty} J_{\lambda}(u_k) = 2 \lim_{k \rightarrow \infty}
I_{\lambda}(u_k) = 0.
\end{align*}
Hence we must have $u \not\equiv 0.$ Now it remains to show that $u \in \mathcal{M}.$\\
First assume that 
\begin{align*}
 ||u||^2 > \int_{\bn} f(x,u)u \ dv_g.
\end{align*}
 This together with \eqref{4.2.} and Proposition \ref{L3.6} gives
\begin{align*}
 \lim_{k \rightarrow \infty} \int_{\bn} f(x,u_k)u_k \ dv_g = \int_{\bn} f(x,u)u \ dv_g.
\end{align*}
Then lower semi continuity of the norm implies
\begin{align*}
 ||u||^2 &\leq \liminf_{k \rightarrow \infty} \ ||u_k||^2 \\
         &=    \liminf_{k \rightarrow \infty} \int_{\bn} f(x,u_k)u_k \ dv_g \\
         &= \int_{\bn} f(x,u)u \ dv_g.
\end{align*}
But this contradicts our initial assumption.  Hence we must have $||u||^2 \leq \int_{\bn} f(x,u)u \
dv_g.$ It follows from the proof of part (ii) of Theorem \ref{PS} 
that there exists $0 < \gamma \leq 1$ such that $\gamma u \in \mathcal{M}.$
Then by monotonicity of $\frac{f(x,tu)u}{t}$
we have, 
\begin{align*}
 \frac{\eta(f)^2}{2} \leq J_{\lambda}(\gamma u) &= I_{\lambda}(\gamma u) \\
                                          &\leq I_{\lambda}(u)     \\
                                          &\leq \liminf_{k \rightarrow \infty}
I_{\lambda}(u_k) \\
                                          &= \liminf_{k \rightarrow \infty} J_{\lambda}(u_k) =
\frac{\eta(f)^2}{2},
\end{align*}
and then again using part (ii) of Theorem \ref{PS}, we conclude that $\gamma = 1$ and $J_{\lambda}(u) =
\frac{\eta(f)^2}{2}.$ This completes the proof.\\

Our next job is to investigate existence of sign changing solution whose  
 proof will heavily depend on the following concentration lemma.
The proof of concentration lemma follows in the same lines as in (\cite{ADI1}, Lemma 3.1) with some modifications (See Appendix 1).\\
Here we state the lemma:
\begin{lem}\label{mainlemma}
 Let $f(x,t) = h(x,t) (e^{\la t^2} - 1)$ be a function of critical growth on $\bn$ and $V$ be the one dimensional subspace
defined by $\{pu_{0} : p \in \mathbb{R} \}$
of $H^{1}_{R}(\bn).$ Let $h_{0,\beta}(t) = \inf \{h(x,t); x \in \overline{B(0, \beta)} \} $ 
and $C(V) = \sup \{J_{\la}(u) : u \in V \}.$ Assume that :\\
%
%
 For every $ N > 0,$ there exists  $t_{N} > 0$ such that 
\[
 h_{0,\beta}(t)t \geq e^{N t}, \ \forall \ t \geq t_{N}.
\]

Then there exists $\epsilon_{0} > 0$ such that, for $0 < \epsilon < \epsilon_{0}, $

\begin{equation}\label{c0}
\sup_{u \in V, t\in \mathbb{R}} J_{\lambda} (u + t m_{\epsilon, \beta}) < C(V) + \frac{2\pi}{\la}, 
\end{equation}\label{mnl}
where $m_{\epsilon, \beta}$ is the Moser function.
\end{lem}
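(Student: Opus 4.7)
The strategy is a concentration argument adapted to the hyperbolic setting, along the lines of the Euclidean proof of Adimurthi--Yadava \cite{ADI1}. Since $V$ is one-dimensional, I parametrize its elements as $pu_{0}$ and set $G_{\epsilon}(p,t) := J_{\la}(pu_{0} + t\, m_{\epsilon,\beta})$ for $(p,t) \in \mathbb{R}^{2}$. Using (C4) the nonlinear term of $J_{\la}$ eventually dominates the quadratic term as $|(p,t)|\to\infty$ (because the Moser piece is supported on $B(0,\beta)$ where $u_{0}$ is bounded), so $G_{\epsilon}$ attains its supremum at some $(p_{\epsilon}, t_{\epsilon})$, at which both partial derivatives vanish, yielding stationarity relations in $p$ and in $t$.

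Now argue by contradiction: suppose there is a sequence $\epsilon_{n} \downarrow 0$ with $G_{\epsilon_{n}}(p_{n}, t_{n}) \geq C(V) + 2\pi/\la$. Since $G_{\epsilon_{n}}(p_{n}, 0) \leq C(V)$, the excess must come from the Moser piece. From the stationarity condition $\partial_{t}G_{\epsilon_{n}} = 0$, the identity
\[
t_{n} + p_{n} \langle u_{0}, m_{\epsilon_{n},\beta}\rangle = \int_{\bn} f(x, w_{n})\, m_{\epsilon_{n},\beta}\, dv_{g}, \qquad w_{n} := p_{n}u_{0} + t_{n}m_{\epsilon_{n},\beta},
\]
combined with the hyperbolic Moser--Trudinger inequality \eqref{mtr} applied to $w_{n}/||w_{n}||$, and the hypothesis $G_{\epsilon_n}(p_n,t_n)\ge C(V)+2\pi/\la$, gives the sharp concentration bound $t_{n}^{2} \to 4\pi/\la$ from below; simultaneously $p_{n}$ stays bounded, since $||m_{\epsilon_{n},\beta}||=1$ and $u_{0}$ is fixed.

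The crucial step is to show the potential integral blows up on the plateau $B(0,\epsilon_{n})$, where $m_{\epsilon_{n},\beta} \equiv M_{n} := (2\pi)^{-1/2}\sqrt{\log(\beta/\epsilon_{n})} \to \infty$. On this set, $s_{n}(x) := p_{n}u_{0}(x) + t_{n}M_{n} = t_{n}M_{n} + O(1)$. The exponential hypothesis $h_{0,\beta}(s) s \geq e^{Ns}$ for $s \geq t_{N}$ and (C2) give, for $n$ large and $x \in B(0,\epsilon_{n})$,
\[
F(x, s_{n}(x)) \;\geq\; C\, \frac{e^{N s_{n}}}{s_{n}^{2}}\, e^{\la s_{n}^{2}}.
\]
Since $\la t_{n}^{2} M_{n}^{2} = \frac{\la t_{n}^{2}}{2\pi}\log(\beta/\epsilon_{n}) \to 2\log(\beta/\epsilon_{n})$, the factor $e^{\la s_{n}^{2}}$ has size $(\beta/\epsilon_{n})^{2+o(1)}$, precisely balancing the Euclidean area $\sim \epsilon_{n}^{2}$ of the plateau; the extra super-exponential factor $e^{Ns_{n}}$, available for \emph{every} $N > 0$ by hypothesis, then forces $\int_{B(0,\epsilon_{n})} F(x, w_{n})\, dv_{g} \to \infty$. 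This contradicts the boundedness of $G_{\epsilon_{n}}(p_{n}, t_{n}) = \frac{1}{2}||w_{n}||^{2} - \int F(x, w_{n})\, dv_{g}$.

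The main obstacle is the precise quantitative control of $t_{n}$ in the second step: one needs $t_{n}^{2}$ to converge to $4\pi/\la$ \emph{sharply from below}, so that the leading exponential on the plateau exactly matches the plateau's area and the hypothesis on $h$ can be leveraged. A looser bound would either leave a gap that the extra $e^{Ns_{n}}$ factor could not overcome, or else make $e^{\la s_{n}^{2}}$ so large that the growth already contradicts Moser--Trudinger, invalidating the very existence of the maximizer. This is where the critical constant $4\pi$ enters decisively; the hyperbolic geometry contributes only through the bounded conformal factor on $B(0,\beta)$, so the Euclidean scaling estimates transplant essentially unchanged.
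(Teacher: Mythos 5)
Your overall strategy coincides with the paper's: maximize $J_{\la}$ over the two--parameter family $pu_{0}+t\,m_{\epsilon,\beta}$, argue by contradiction along a sequence $\epsilon_{n}\downarrow 0$, show $t_{n}^{2}\to 4\pi/\la$ and that the maximizers stay bounded, and then use the hypothesis $h_{0,\beta}(s)s\ge e^{Ns}$ on the plateau $B(0,\epsilon_{n})$ to force the potential term to blow up. Your balance computation at the end ($e^{\la s_{n}^{2}}\sim(\beta/\epsilon_{n})^{2+o(1)}$ against plateau area $\sim\epsilon_{n}^{2}$, with the factor $e^{Ns_{n}}$ and the arbitrariness of $N$ tipping the scale) is exactly the mechanism of the paper's final step.

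However, there is a genuine gap at the step you yourself single out as ``the main obstacle.'' You assert that the stationarity identity, the hyperbolic Moser--Trudinger inequality applied to $w_{n}/||w_{n}||$, and the hypothesis $G_{\epsilon_n}(p_n,t_n)\ge C(V)+2\pi/\la$ give the ``sharp'' convergence $t_{n}^{2}\to 4\pi/\la$. These ingredients only give the qualitative limit $t_{0}^{2}=4\pi/\la$ (the lower bound comes from passing to the limit in the contradiction hypothesis using the weak convergence $w_{n}\rightharpoonup v_{0}$ and the convergence of $\int F$; the upper bound from a growth estimate). What the final blow-up actually requires is the quantitative rate
\[
\Bigl(\log\tfrac{\beta}{\epsilon_{n}}\Bigr)^{\frac{1}{2}}\Bigl(\tfrac{4\pi}{\la}-t_{n}^{2}\Bigr)\le C,
\]
because the deficit $\delta_{n}:=4\pi/\la-t_{n}^{2}$ enters the plateau integral as $\epsilon_{n}^{\la\delta_{n}/2\pi}=e^{-\frac{\la\delta_{n}}{2\pi}\log(1/\epsilon_{n})}$, while the gain $e^{Ns_{n}}$ is only $e^{cN\sqrt{\log(1/\epsilon_{n})}}$; without the above rate the loss can swamp the gain for every fixed $N$, and your contradiction evaporates. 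Moser--Trudinger gives no such rate. The paper obtains it by a separate argument (its Step 4), using convexity of $t\mapsto F(x,t)$ and elliptic regularity applied to $v_{n}$, following Adimurthi--Yadava; this estimate, together with the explicit lower bound $M\ge\pi p_{n,\beta}(0)\epsilon_{n}^{2}(e^{\la t_{n}^{2}m_{n,\beta}^{2}(0)-\epsilon_{n}m_{n,\beta}(0)}-1)$ of its Step 3, is what makes the final comparison work. A secondary, smaller gap: the boundedness of $(p_{n},t_{n})$ is not automatic from $||m_{\epsilon_{n},\beta}||=1$; the paper needs a two-case analysis on $t_{n}/||p_{n}u_{0}||$, using the plateau growth in one case and Fatou's lemma in the other.
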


Now we can prove Theorem \ref{mt2}.\\
{\bf{Proof of Theorem \ref{mt2}}}\\
From Lemma \ref{abstract}, it is sufficient to show that the infimum of $J_{\lambda}$
is achieved on $\mathcal{M}_{1}.$ We first claim that: \\
{\bf{Claim 1.}} $0 < \frac{\eta_{1}(f)^2}{2} < \frac{\eta(f)^2}{2} + \frac{2\pi}{\lambda}.$ \\
From definition it is clear that  $\eta_{1}(f)^2 \geq \eta(f)^2. $ By Theorem \ref{mt1}, let 
$u_{0} \in \mathcal{M}$ be such that 
\begin{equation}\label{11.2}
\sup_{\alpha \in \mathbb{R}} J_{\lambda}(\alpha u_{0}) = J_{\lambda}(u_{0}) = \frac{\eta(f)^2}{2} > 0, 
\end{equation}
 hence this gives $\eta_{1}(f) > 0.$ From (2) of Lemma \ref{abstract}, for any $n_{0} > 0,$
\begin{equation}\label{21.2}
\frac{\eta_{1}(f)^2}{2} \leq \sup_{p,q \in \mathbb{R}} J_{\lambda}(pu_{0} + q m_{n_{0},\beta}), 
\end{equation}
where $m_{n_{0},\beta}$ is the Moser function. Again from \eqref{11.2} and by considering $V= \{ p u_{0}, p \in \mathbb{R} \}$
in  Lemma \ref{mainlemma}, there exists $n_{1} > 0$ such that for $0 < n_{0} <  n_{1}, $
\begin{equation}\label{31.2}
\sup_{p,q \in \mathbb{R}} J_{\lambda} (p u_{0} + q m_{n_{0}, \beta}) < \frac{\eta(f)^2}{2} + \frac{2 \pi}{b}. 
\end{equation}
 Hence Claim 1 follows from \eqref{21.2} and \eqref{31.2}.\\

Let $u_{k}$ be in $\mathcal{M}_{1}$ such that 
\[
 \lim_{k \rightarrow \infty}J_{\lambda}(u_{k}) = \frac{\eta_{1}(f)^2}{2}.
\]
Since $ J_{\lambda} = I_{\lambda}$ on $\mathcal{M}_{1},$ hence from part (ii) of  Lemma \ref{L3.1}, we obtain 
\begin{equation}\label{41.2}
 \sup_{k}||u_{k}|| < \infty, \ \ \sup_{k} \int_{\bn} f(x,u_{k}) u_{k} \ dv_{g} < \infty.
\end{equation}
Therefore we can extract a subsequence of $\{ u_{k}\}$ such that
\[
 u_{k}^{\pm} \rightarrow u_{0}^{\pm} \ \mbox{weakly}.
\]
 From \eqref{41.2} and Proposition \ref{L3.3}, we get 
\begin{equation}\label{51.2}
\lim_{k \rightarrow \infty} \int_{\bn} F(x, u_{k}^{\pm}) \ dv_{g} = \int_{\bn} F(x , u_{0}^{\pm}) \ dv_{g}. 
\end{equation}

In accordance to Claim 1, we can choose $\epsilon > 0,$ $m_{0} > 0$ such that for all $ k \geq m_{0},$
\[
 \eta_{1}(f)^2 \leq 2 J_{\lambda}(u_{k}) \leq \eta(f)^2 + \frac{4 \pi}{\lambda} - \epsilon,
\]
 this together with $J_{\lambda}(u_{k}^{\pm}) \geq \frac{\eta(f)^2}{2}$ gives 
\begin{equation}\label{61.2}
 J_{\lambda}(u_{k}^{\pm}) \leq \frac{2 \pi}{\lambda} - \frac{\epsilon}{2}.
\end{equation}
                                                              
{\bf{Claim 2.}} $u_{0}^{\pm} \not\equiv 0$ and $||u_{0}^{\pm}||^2 \leq \int_{\bn} f(x , u_{0}^{\pm})u_{0}^{\pm} \ dv_{g}.$\\

We shall only prove this for $u_{0}^{+}.$ A similar proof will holds for $u_{0}^{-}$ as well. Suppose $u_{0}^{+} \equiv 0.$ Then 
from \eqref{51.2} and \eqref{61.2}, we have 

\[
 \limsup_{k \rightarrow \infty} ||u^{+}_{k}||^2 = 2 \limsup_{k \rightarrow \infty} 
\left (J_{\lambda}(u_{k}^{+}) + \int_{\bn} F(x, u_{k}^{+}) \ dv_{g} \right ) \leq \frac{4 \pi}{\lambda} - \epsilon.
\]

Therefore from Proposition \ref{L3.4},
\begin{equation}\label{71.2}
\lim_{k \rightarrow \infty} \int_{\bn} f(x, u_{k}^{+}) u_{k}^{+} \ dv_{g} = 0.
 \end{equation}
 Since $u_{k}^{+} \in \mathcal{M},$ we get from \eqref{71.2}, 
 $\lim_{k \rightarrow \infty} ||u^{+}_{k}|| = 0.$ Which together with $\eta(f) > 0,$ gives a contradiction.
This proves $u^{+}_{0} \not\equiv 0.$ Now suppose 
\begin{equation}\label{81.2}
||u^{+}_{0}||^2 > \int_{\bn} f(x, u_{0}^{+}) u_{0}^{+} \ dv_{g}.
\end{equation}
 Then $\{u_{k}^{+}, u_{0}^{+} \}$ satisfies all the hypothesis of Proposition \ref{L3.6}, and hence 
\[
 \lim_{k \rightarrow \infty} \int_{\bn} f(x, u_{k}^{+})u_{k}^{+} \ dv_{g} = \int_{\bn} f(x, u_{0}^{+})u_{0}^{+} \ dv_{g}.
\]
Therefore we have 
\[
 ||u^{+}_{0}||^2 \leq \liminf_{k \rightarrow \infty} ||u_{k}^{+}||^2 
= \liminf_{k \rightarrow \infty} \int_{\bn} f(x, u_{k}^{+})u_{k}^{+} \ dv_{g} = \int_{\bn} f(x, u_{0}^{+})u_{0}^{+} \ dv_{g},
\]
which contradicts \eqref{81.2} and hence this proves Claim 2.\\

Thanks to Claim 2, the property
\[
 ||u_{0}^{\pm}||^2 \leq \int_{\bn} f(x, u_{0}^{\pm})u_{0}^{\pm} \ dv_{g},
\]
 enables us to choose $0 < r_{1} \leq 1,$ $0 < r_{2} \leq 1,$ such that 
\[
 v = r_{1}u_{0}^{+} - r_{2}u_{0}^{-} \in \mathcal{M}_{1}.
\]
Also we have 
\begin{align*}
 \frac{\eta_{1}(f)^2}{2} \leq J_{\lambda}(v) & 
\leq I_{\lambda}(v) = I_{\lambda}(r_{1}u_{0}^{+}) + I_{\lambda}(r_{2}u_{0}^{-}) \\
& \leq I_{\lambda}(u_{0}^{+}) + I_{\lambda}(u_{0}^{-}) \leq \liminf_{k \rightarrow \infty} I_{\lambda}(u_{k}) \\
& = \lim_{k \rightarrow \infty} J_{\lambda}(u_{k}) = \frac{\eta_{1}(f)^2}{2}. 
\end{align*}
Hence  $r_{1} = r_{2} = 1,$ which gives $u_{0} \in \mathcal{M}_{1}$ 
and $J_{\lambda}(u_{0}) = \frac{\eta_{1}(f)^2}{2}.$ This completes the proof of Theorem \ref{mt2}.\\

{\bf{Proof of Theorem \ref{mt3} :}}

 The proof of this theorem has the similar lines of proof of ( \cite{ADI1}, Theorem 1.3) with an account
  of lemma similar to ( \cite{ZN} Lemma 3.1). For the sake of brevity, we have omitted the detailed verification.

\section{Appendix 1}
In this section we shall try to give a sketch of the proof of Lemma \ref{mainlemma}. \\
{\bf{Proof of Lemma \ref{mainlemma} :}}\\
From the radial estimate \ref{radialestimate}, it is very clear that blow up can occur only at the origin. 
Hence we need to analyze 
only near origin. Denote the one dimensional vector space $\{ p u_{0} : p \in \mathbb{R} \}$ by $V.$\\
Let $ u_{l} = p_{l} u_{0} + t_{l} m_{l,\beta}$ be such that $t_{l} \geq 0 $ and 
\[
 J_{\lambda} (u_{l}) = \displaystyle{\sup_{ \alpha , t \in \mathbb{R}}} J_{\lambda} (\alpha u_{0} + t m_{l,\beta}).
\]
 
Since $J_{\lambda}^{\prime}(u_{l}) = 0$ on $\{ \alpha u_{0} + t m_{l, \beta} : \alpha, t \in \mathbb{R}\},$ hence 
\begin{equation}\label{c1}
 ||u_{l}||^{2} = \int_{\bn} f(x , u_{l})u_{l} dv_{g}.
\end{equation}
Now suppose that \eqref{c0} is not true. Then there exists a sequence $l_{n}$ such that 
$l_{n} \rightarrow 0$ as $ n \rightarrow 0$ and for $ v_{n} := \alpha_{n}u_{0} = \alpha_{l_{n}} u_{0},$ 
$m_{n,\beta} = m_{l_{n}, \beta},$ $t_{n} = t_{l_{n}},$  $u_{n} = u_{l_{n}},$
\begin{equation}\label{c2}
C(V) + \frac{2 \pi}{\lambda} \leq J_{\lambda}(u_{n}).
 \end{equation}

{\bf{Step 1.}} $\{ v_{n}\}$ and $t_{n}$ are bounded.\\
Suppose it is not true. Then either 
\[
 \displaystyle{\lim_{n \rightarrow \infty}} \frac{t_{n}}{||v_{n}||} > 0, \ \mbox{or} \ 
\displaystyle{\lim_{n \rightarrow \infty}} \frac{t_{n}}{||v_{n}||} = 0. 
\]
In first case, there exist a subsequence of $\{ v_{n},t_{n} \}$ and a constant $C > 0$ such that for large $n,$
\begin{equation}\label{c31}
\frac{t_{n}}{||v_{n}||} \geq C \ \mbox{and} \ t_{n} \rightarrow \infty \ \mbox{as} \ n \rightarrow \infty.
\end{equation}
As $||m_{n, \beta}|| = 1,$ we have from \eqref{c31}, 
\begin{equation}\label{c41}
||u_{n}||^2 = t_{n}^2 + 2 t_{n} \langle v_{n}, m_{n, \beta} \rangle + ||v_{n}||^2 \leq C_{1} t^{2}_{n},
\end{equation}
where $C_{1} = 1 + \frac{2}{C} + \frac{1}{C^2}.$ Since $\frac{||v_{n}||}{t_{n}}$ is bounded 
and $v_{n} \in \{ pu_{0} : p \in \mathbb{R} \},$ therefore  $\frac{|v_{n}|_{\infty}}{t_{n}}$ is bounded. Hence 
for $ x \in B(0,l_{n})$ and for large $n,$
\begin{align}\label{c51}
 u_{n}(x) & = v_{n}(x) + t_{n} m_{n, \beta}(x) \\ \notag
& = t_{n}m_{n, \beta}(x) \left( 1 + \frac{v_{n}(x)}{t_{n}} \frac{1}{m_{n, \beta}(x)} \right ) \\ \notag
& \geq \frac{1}{2} t_{n} m_{n, \beta }(x). 
\end{align}
Hence we have 
\begin{align}
 C_{1}t_{n}^2 \geq ||u_{n}||^2  & = \int_{\bn} f(x, u_{n})u_{n} \ dv_{g} \\ \notag 
& \geq \int_{B(0, \beta)} h(x,u_{n})u_{n}(e^{\lambda u_{n}^2} - 1) \ dv_{g} \\ \notag
& \geq \int_{B(0, \beta)} h_{0,\beta}(u_{n}) u_{n}(e^{\lambda u_{n}^2} - 1) \ dv_{g} \\ \notag
& \geq \int_{B(0, l_{n})} h_{0,\beta}(u_{n}) u_{n}(e^{\lambda u_{n}^2} - 1) \ dv_{g} \\ \notag
& \geq C_{2} (e^{\frac{\lambda}{8} t_{n}^2 m_{n,\beta}^{2}(0)} - 1) l_{n}^2,
\end{align}
where $C_{2}$ is a positive constant. This implies that 
\[
 C_{1} \geq C_{2} (e^{{\frac{\lambda t_{n}^2}{16 \pi}} \log \frac{\beta}{l_{n}} - 2 \log \frac{1}{l_{n}} - 2 \log t_{n}} - l_{n}^2) 
\rightarrow \infty,
\]
as $ l_{n} \rightarrow 0,$ which gives a contradiction and hence first one can not occur.\\
In second case, first note that $||v_{n}|| \rightarrow \infty.$ Let 
\[
 z_{n} = \frac{v_{n}}{||v_{n}||}, \ \epsilon_{n} = \frac{t^{2}_{n}}{||v_{n}||^2} + 
\frac{2 t_{n}}{||v_{n}||} \langle z_{n},m_{n, \beta} \rangle.
\]

Then upto a subsequence and using the fact that $ z_{n} \in \{p u_{0} : p \in \mathbb{R} \},$ we can assume 
\begin{equation}\label{c61}
 \displaystyle{\lim_{n \rightarrow \infty}} z_{n} = z_{0}, \ z_{0} \in \{p u_{0} : p \in \mathbb{R} \} \setminus \{ 0 \},
\displaystyle{\lim_{n \rightarrow \infty}} \epsilon_{n} = 0.
\end{equation}
Also
\begin{align}
 ||u_{n}||^2  & = ||v_{n}||^2 + 2 t_{n} \langle v_{n}, m_{n, \beta } \rangle + t_{n}^2 \\ \notag
 & = ||v_{n}||^2 (1 + \epsilon_{n}). \notag\\
\end{align}
Hence
\begin{equation}\label{c71}
 \frac{u_{n}}{||u_{n}||} = \frac{1}{( 1 + \epsilon_{n})^\frac{1}{2}} \left( z_{n} + \frac{t_{n}}{||v_{n}||} m_{n, \beta} \right) 
\rightarrow z_{0} \not\equiv 0 \ \mbox{in} \ H^{1}(\bn).
\end{equation}
Now using Fatou's lemma,
\begin{align}
 \infty & = \int_{\bn} \displaystyle{\liminf_{n \rightarrow \infty}} \
\frac{f(x,u_{n})}{u_{n}} \left ( \frac{u_{n}}{||u_{n}||} \right)^2 dv_{g} \\ \notag 
 & \leq \displaystyle{\liminf_{n \rightarrow \infty}} \ \frac{1}{||u_{n}||^2} \int_{\bn} f(x, u_{n}) u_{n} \ dv_{g} = 1,
\end{align}
which is a contradiction. Hence this proves Step 1.\\

Therefore upto a subsequence we can assume
\[
 \displaystyle{\lim_{n \rightarrow \infty}} v_{n} = v_{0} \ \mbox{in}\ V, 
\displaystyle{\lim_{n \rightarrow \infty}} t_{n} = t_{0}. 
\]

Also $u_{n} \rightharpoonup  v_{0}$ weakly in $H^{1}(\bn)$ and for almost all $x$ in $\bn.$
\begin{rem}
 $\displaystyle{\lim_{n \rightarrow \infty}} v_{n} = v_{0} \ \mbox{in}\ V$ implies there exist a 
sequence $\alpha_{n} \in \mathbb{R}$ such that
$\alpha_{n}u_{0} \rightarrow \alpha u_{0}.$
\end{rem}
Using Proposition \ref{L3.3}, we conclude 
\begin{equation}\label{c81}
 \displaystyle{\lim_{n \rightarrow \infty}} \int_{\bn} F(x, u_{n}) \ dv_{g} = \int_{\bn} F(x, v_{0}) \ dv_{g}.
\end{equation}

Now letting $ n \rightarrow \infty$ in \eqref{c2} and using convergence results, we get
\begin{equation}\label{st1m}
 C(V) + \frac{2 \pi}{\lambda} \leq J_{\lambda}(v_{0}) + \frac{t_{0}^2}{2} \leq C(V) + \frac{t_{0}^2}{2}.
\end{equation}

{\bf{Step 2.}} $t_{0}^2 = \frac{4 \pi}{\lambda}$ and $J_{\lambda}(v_{0}) = C(V).$\\

From \eqref{st1m} we have $t_{0}^2 \geq \frac{4 \pi}{\lambda}.$ Suppose $t_{0}^2 > \frac{4 \pi}{\lambda},$ then
arguing same as in step 2 of (\cite{ADI1}, Lemma 3.3) and using Step 1, we can get, for $ n \geq n_{0},$
 \begin{align}\label{st21}
  \mbox{M} = \displaystyle{\sup_{n}} \ ||u_{n}||^2 \geq C_{1} [l_{n}^{-2 ((1 + \frac{\epsilon}{4})(1 - \epsilon_{n}) - 1)} - l_{n}^2], 
 \end{align}
for some positive constant $C_{1}.$ As $\epsilon_{n} \rightarrow 0, l_{n} \rightarrow 0,$ \eqref{st21} gives a 
contradiction. Hence $t_{0}^2 = \frac{4 \pi}{\lambda}$ and \eqref{st1m} gives $J_{\lambda}(v_{0}) = C(V).$ \\

{\bf{Step 3.}} There exist positive constants $n_{0}$ and $C_{0}$ such that for all $ n \geq n_{0},$ 

\begin{equation}\label{st34}
 C_{0} + \log \left(M + \pi p_{n, \beta}(0) l_{n}^2 \right) \geq \left(t_{n}^2 - \frac{4 \pi}{\lambda} \right) m_{n, \beta}^2(0) 
- \frac{1}{\lambda} \epsilon_{n}m_{n,\beta}(0) + \frac{1}{\lambda} \log p_{n,\beta}(0),
\end{equation}
where \[
       \epsilon_{n} = 2 \lambda t_{n} \displaystyle{\sup_{x \in \bn}} \ |v_{n}(x)|, 
      \]
and 
\[
 p_{n,\beta}(0) = \inf \left \{ t h_{0,\beta}(t): 
t \in\left[ \frac{1}{2} t_{n}m_{n,\beta}(0), 2 t_{n}m_{n,\beta}(0) \right] \right \}.
\]
A straightforward calculations gives,
\begin{align}\label{st35}
 M \geq \pi p_{n,\beta}(0) l_{n}^2 (e^{\lambda t_{n}^2 m_{n, \beta}^2(0) - \epsilon_{n}m_{n, \beta}(0) } - 1),
\end{align}
and from \eqref{st35}, Step 3 follows easily.\\

{\bf{Step 4.}} There exists a constant $C_{1} > 0$ such that for large $n,$ 
\begin{equation}\label{st46}
 \left(\log \ \frac{\beta}{l_{n}} \right)^{\frac{1}{2}} 
\left(\frac{4 \pi}{\lambda} - t_{n}^2 \right) \leq \tilde C_{1}|\Delta v_{n}|_{L^{2}_{loc}} \leq C_{1} \alpha_{n}.
\end{equation}

Proof of Step 4 follows by convexity of $ t \rightarrow F(x,t)$ and elliptic regularity.\\
Finally we are in position to prove the final step. By hypothesis, given any $ N  > 0$ and a compact set $\overline{B(0, \beta)},$
 there exists $t_{N,\beta} > 0$ such that $h_{0,\beta}(t) t \geq e^{Nt}$ for all $t \geq t_{N, \beta}.$ Since we  
$m_{n, \beta}(0) \rightarrow \infty,$ by \eqref{st34} and \eqref{st46} we obtain for large $n,$
\begin{equation}\label{st51}
 \left[\frac{Nt_{n}}{2 \lambda} - \frac{\epsilon_{n}}{\lambda}- 
\frac{\sqrt{2\pi}C_{0}}{\left( \log \ \frac{\beta}{l_{n}}\right)^{\frac{1}{2}}} - 
\frac{\sqrt{2 \pi} \log \left(M + \pi e^{Nt_{n}} l_{n}^2 \right)}{\left(\log \ \frac{\beta}{l_{n}} \right)^{\frac{1}{2}}} \right]
\leq  \frac{C_{1}}{\sqrt{2 \pi}} \alpha_{n}.
\end{equation}
Since $\epsilon_{n},$ $\alpha_{n}$ are bounded and $t_{n} \rightarrow t_{0} > 0.$ From above, we get 
\[
 \frac{Nt_{0}}{2 \lambda} \leq \tilde{C_{1}}
\]
 for some positive constant $\tilde{C_{1}}.$ Since $N$ is arbitrary, we get a contradiction. Hence this proves the lemma.

\section{Appendix 2}
This section is devoted to the existence of non radial solutions. Typically existence of non radial solutions on the hyperbolic space is a difficult question due to the lack of compactness through 
vanishing (earlier mentioned). We have made an attempt to give an existence theorem for non radial solution in certain cases. We have eliminated concentration at infinity by considering
a suitable growth condition on the nonlinearity or in other words, by considering
a penalty assumption which sets the asymptotic nonlinear part to become zero.

Moreover, by invariance with respect to M\"obius transformations we are able to prove 
P.L.Lions Lemma \ref{L3.5} for $H^{1}(\mathbb{B}^{N})$  which plays an important role in the subsequent proof. 
In this regard we first modify the function of critical growth. \\

A model problem of our study is :
\begin{equation} \label{nonradial equation}
 -\Delta_g u = f(x,u), \ \ \ x \in \bn,
\end{equation}
where $f(x,t)$ is a function of critical growth as defined in Definition \ref{1}
 and in addition it satisfies some growth condition near infinity. To be precise we assume $f(x,t) = h(x,t)(e^{\la t^2} - 1)$
satisfying : \\

$(\overline{C3})$
\begin{align} \label{C4 assumption}
   F(x,t) \leq C(g(x) + f(x,t)),\ g \in L^1(\bn, dv_g) \cap L^p(\bn, dv_g), \ \ \mbox{for some} \ p \in (1,2], 
\end{align}

$(C5)$ There exists a $\delta > 0$ such that
\begin{align}
 \frac{h(x,t)}{(1 - |x|^2)^{\delta}} \in L^{\infty}(\{|x| > \alpha\} \times [-N, N]), \ \ \mbox{for all} \ N.
\end{align}

$(C6)$ for every $\epsilon>0,$ there exists $\alpha(\epsilon) > 0,$ such that
\begin{align} \label{aditional assumption}
h(x,t) \geq (1 - |x|^2)^l e^{-\epsilon t^2} \ \mbox{for some}  \  l > 0, \ |x| > \alpha(\epsilon) \ \mbox{and} \ t \ 
\mbox{positive large}. 
\end{align}

A prototype example of such function is $f(x,t) = (1 - |x|^2)^l t (e^{\la t^2} - 1)$ for some $ l > 0.$ Hence unlike in the radial case, here  we can allow maximum singularity of order 
$\frac{1}{(1 - |x|^2)^{2 - \epsilon}}$ at the boundary. We prove

\begin{thm} \label{main theorem for nonradial}
 Let $f(x,t)$ be a function of critical growth satisfying $(\overline{C3}), (C5)$ and $(C6).$ Further assume that
 \begin{align}
  \lim_{t \rightarrow \infty} \inf_{x \in K} h(x,t)t = \infty,
 \end{align}
for every compact set $K \subset \bn,$  then \eqref{nonradial equation} has 
a positive solution.
\end{thm}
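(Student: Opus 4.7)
The plan is to follow the variational strategy used for Theorem \ref{mt1}, now carried out on the full space $H^1(\bn)$ in place of the radial subspace $H^1_R(\bn)$. Define
\[
\mathcal{M} := \left\{u \in H^1(\bn)\setminus\{0\} : ||u||^2 = \int_{\bn} f(x,u)u\,dv_g\right\},
\]
and seek to realize $\eta(f)^2/2 := \inf_{\mathcal{M}} J_\lambda$ at a nonzero $u$. The technical tools of Section 4 --- Lemmas \ref{L3.1} and \ref{L3.2}, the hyperbolic Lions lemma \ref{L3.5}, and Propositions \ref{L3.4} and \ref{L3.6} --- do not use radiality in their proofs and apply verbatim on $H^1(\bn)$. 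First I would verify $0 < \eta(f)^2 < 4\pi/\lambda$: the lower bound follows from Step 1 of the proof of Theorem \ref{PS}(ii), while the strict upper bound is obtained by testing with Moser functions $m_{l,R_0}$ supported in a fixed compact set $K \subset \bn$ and exploiting the blow-up condition $\inf_{x \in K} h(x,t)t \to \infty$ via the contradiction argument of Lemma \ref{L3.7}.

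The genuinely new obstruction is loss of compactness through \emph{vanishing} of a minimizing sequence $u_k$. In the radial case this is ruled out by the pointwise bound \eqref{radialestimate}, which forces $|u_k(x)| \leq C(1-|x|^2)^{1/2}$ outside a compact set, and this is precisely what fails for general $u_k \in H^1(\bn)$. The penalty hypotheses $(\overline{C3})$, $(C5)$ and $(C6)$ are tailored to restore uniform tail control. The plan is to split
\[
\int_{\bn} F(x,u_k)\,dv_g = \int_{\{|x|\leq \alpha\}} F(x,u_k)\,dv_g + \int_{\{|x|>\alpha,\ |u_k|\leq N\}} F(x,u_k)\,dv_g + \int_{\{|x|>\alpha,\ |u_k|>N\}} F(x,u_k)\,dv_g.
\]
The first piece is handled by local Rellich compactness; the second is controlled by $(C5)$, which yields $h(x,u_k)(e^{\lambda u_k^2}-1) \leq C_N(1-|x|^2)^\delta$ on the moderate-amplitude region, so that after integration against $dv_g$ one obtains a weighted quantity vanishing uniformly in $k$ as $\alpha \to 1$; the third piece is dominated using the improved integrability of $g$ in $(\overline{C3})$ together with the uniform $L^1$-bound on $F(x,u_k)$ coming from boundedness of $J_\lambda(u_k)$. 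Together these give $\int_{\bn} F(x,u_k)\,dv_g \to \int_{\bn} F(x,u)\,dv_g$, and then the Case (ii) argument of Theorem \ref{PS} immediately rules out $u \equiv 0$: vanishing would force $||u_k||^2 \to 2\eta(f)^2 < 4\pi/\lambda$ and, by Proposition \ref{L3.4} applied with $v_k = u_k$, $l = 2$, would then yield $\int_{\bn} f(x,u_k)u_k\,dv_g \to 0$, contradicting $u_k \in \mathcal{M}$ together with $\eta(f) > 0$.

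Once non-vanishing is secured, the rest proceeds along the radial template. If $||u||^2 > \int_{\bn} f(x,u)u\,dv_g$, Proposition \ref{L3.6} gives $\int_{\bn} f(x,u_k)u_k\,dv_g \to \int_{\bn} f(x,u)u\,dv_g$, and weak lower semicontinuity of the norm produces a contradiction. Hence $||u||^2 \leq \int_{\bn} f(x,u)u\,dv_g$, so there exists $0 < \gamma \leq 1$ with $\gamma u \in \mathcal{M}$; the monotonicity supplied by (C2) together with $\eta(f)^2 < 4\pi/\lambda$ forces $\gamma = 1$, so $u$ realizes the infimum and is a critical point of $J_\lambda$. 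Replacing $u$ by $|u|$ (justified since $J_\lambda(|u|) = J_\lambda(u)$) and applying the strong maximum principle yields a positive solution; here $(C6)$ ensures that $h$ does not degenerate at the ideal boundary in a way that would obstruct strict positivity. The single hardest step will be the tail estimate ruling out vanishing: since the conformal group of $\bn$ coincides with its isometry group, mass can be transported to the ideal boundary by M\"obius translations with no change of energy, and $(C5)$--$(C6)$ must absorb exactly this phenomenon inside the critical-growth integrals defining $J_\lambda$.
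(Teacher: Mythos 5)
Your overall strategy --- minimizing $J_{\lambda}$ over the Nehari manifold in $H^1(\bn)$ and using $(\overline{C3})$, $(C5)$, $(C6)$ to replace the radial decay estimate --- is exactly the route the paper takes, and your treatment of the convergence of $\int_{\bn}F(x,u_k)\,dv_g$ (splitting $\{|x|>\alpha\}$ into a moderate-amplitude region handled by $(C5)$ and a large-amplitude region handled by the $L^p$-integrability of $g$ in $(\overline{C3})$) matches the paper's first lemma of Section 7. But there are two concrete gaps. First, the claim that Lemmas \ref{L3.1} and \ref{L3.2} and Propositions \ref{L3.4} and \ref{L3.6} ``do not use radiality in their proofs and apply verbatim on $H^1(\bn)$'' is false: each of them invokes the pointwise radial bound \eqref{radialestimate} in an essential way (e.g.\ in \eqref{3.3}--\eqref{3.3.1} for Lemma \ref{L3.1}(ii), in the bound $|u(x)|\leq \tilde C$ for $|x|>\alpha$ leading to \eqref{3.10} in Lemma \ref{L3.2}, in \eqref{3.16}--\eqref{3.17} and \eqref{3.22} for Proposition \ref{L3.4}, and in \eqref{3.27} for Proposition \ref{L3.6}). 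Each of these must be re-proved; the paper does so in Lemmas \ref{nonradial L2}, \ref{nonradial L3} and the two subsequent lemmas of Section 7, using the same $(C5)$/$(\overline{C3})$ mechanism you describe for the $F$-integral, so this is repairable --- but as written, your boundedness of minimizing sequences (which rests on Lemma \ref{L3.1}(ii)) and your appeals to Propositions \ref{L3.4} and \ref{L3.6} are unsupported.

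Second, and more seriously, you have misassigned the role of $(C6)$: it has nothing to do with the maximum principle (positivity follows exactly as in the radial case once a nonnegative nontrivial critical point is produced). $(C6)$ is the lower bound on $h$ near the ideal boundary needed to prove the nonradial analogue of Lemma \ref{L3.2}, namely that $\sup_{||u||\leq 1}\int_{\bn}f(x,cu)u\,dv_g<\infty$ forces $c^2\leq 4\pi/\lambda$. In the radial case that step uses \eqref{radialestimate} to show that $\{|x|>\alpha\}$ contributes a bounded amount to $\int_{\bn}(e^{\lambda(1-\epsilon)c^2u^2}-1)\,dv_g$; for general $u\in H^1(\bn)$ mass can escape to the ideal boundary, and one must instead use $(C6)$ to convert finiteness of $\int_{\bn} f(x,cu)u\,dv_g$ into finiteness of the weighted functional $\int_{\{|x|>\alpha\}}(1-|x|^2)^{l-2}(e^{\lambda(1-\epsilon)c^2u^2}-1)\,dx$, from which $\lambda c^2\leq 4\pi$ follows (this is the paper's Lemma \ref{nonradial L3}). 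Without this you cannot run the Lemma \ref{L3.7} argument, hence cannot conclude $\eta(f)^2<4\pi/\lambda$ --- and your own vanishing argument and your application of Proposition \ref{L3.4} depend on precisely that strict inequality.
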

 
 \begin{rem}
 The function of critical growth $f(x,t)$ defined in the above theorem is not necessarily a 
 radial function in its first variable. Thus from invariance of  $\Delta_{\mathbb{B}^{N}}$ under 
 orthogonal transformation we infer that the solution thus obtained in Theorem \ref{main theorem for nonradial} 
 is non radial if we assume $f(x,t)$ non radial in its first variable.
 \end{rem}

It is easy to see that under the above assumptions  
we can estimate the growth of $F(x,u)$ and $f(x,u)$ near infinity. As a consequence we can prove 
all the necessary tools to acquire existence of solution of \eqref{nonradial equation}. For the 
shake of completeness we will outline some of the steps. In the rest of the section $f(x,t)$ stands for
function of critical growth satisfying $(\overline{C3}), (C5), (C6)$ and $I_{\lambda} , J_{\la}$ are as 
defined before (see section $3$ ) corresponding to this $f.$\\

Now we  will provide a  proof of lemmas which significantly different from the radial ones. 
We have used some refined arguments and taking the advantage of growth conditions to prove the following lemmas.  

\begin{lem}
 Let $\{u_k\}$ be a sequence in $H^1(\bn)$ converging weakly to a function $u$ in $H^1(\bn).$ Further 
 assume that $\sup_k \int_{\bn} f(x,u_k)u_k \ dv_g < +\infty,$
 then 
 \begin{align}
\int_{\bn} F(x,u_k) \ dv_g \rightarrow \int_{\bn} F(x,u) \ dv_g.   
 \end{align}
\end{lem}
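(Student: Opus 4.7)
The plan is to prove convergence via the Vitali convergence theorem applied to $\{F(\cdot,u_k)\}$ in $L^1(\bn, dv_g)$. Weak convergence $u_k \rightharpoonup u$ in $H^1(\bn)$ makes $\{u_k\}$ bounded, and since the conformal factor is locally bounded in $\mathbb{D}$ one obtains weak convergence in $H^1_{\mathrm{loc}}(\mathbb{D})$; by Rellich--Kondrachov, after passing to a subsequence, $u_k \to u$ pointwise a.e.\ on $\bn$, and hence $F(x,u_k) \to F(x,u)$ a.e.\ by continuity of $F$. Hypothesis $(\overline{C3})$ gives $F(x,u_k) \leq C(g(x) + f(x,u_k))$ with $g \in L^1(\bn, dv_g)$ already equi-integrable, so the task reduces to equi-integrability of $\{f(\cdot,u_k)\}$.

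Equi-integrability on sets of small measure follows by splitting $\int_E f(x,u_k)\,dv_g$ across $\{|u_k|\leq M\}$ and $\{|u_k|>M\}$. On the first piece, (C1) yields a uniform bound $h(x,u_k) \leq C(M)$ and hence $f(x,u_k) \leq C(M)$, giving $\int_{E \cap \{|u_k|\leq M\}} f\,dv_g \leq C(M)\,dv_g(E)$. On the second piece the Chebyshev estimate $\int_{\{|u_k|>M\}} f\,dv_g \leq M^{-1}\int |u_k| f(x,u_k)\,dv_g \leq C/M$ uses the standing hypothesis. Choosing $M$ large, then $dv_g(E)$ small, gives the desired smallness uniformly in $k$.

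The main obstacle, and the step where the nonradial hypothesis $(C5)$ does its work, is tightness near the boundary $\{|x| \to 1\}$: one must show $\int_{\{|x|>R\}} f(x,u_k)\,dv_g \to 0$ as $R \to 1^-$, uniformly in $k$. For $R > \alpha$ one splits again by $|u_k| \lessgtr M$; the piece $\{|u_k|>M\}$ is $O(1/M)$ as before. On $\{|x|>R\} \cap \{|u_k|\leq M\}$, $(C5)$ provides $h(x,u_k) \leq C(M)(1-|x|^2)^{\delta}$, and combined with the elementary inequality $e^{\la t^2}-1 \leq C(M) t^2$ for $|t|\leq M$ one gets $f(x,u_k) \leq C(M)(1-|x|^2)^{\delta} u_k^2$. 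Pulling the weight out of the integral and invoking the spectral gap \eqref{firsteigen}, which gives $\|u_k\|_{L^2(dv_g)}^2 \leq 4\|u_k\|^2 \leq C$ uniformly, yields
\[
\int_{\{|x|>R\} \cap \{|u_k|\leq M\}} f(x,u_k)\,dv_g \leq C(M)(1-R^2)^{\delta} \int_{\bn} u_k^2\,dv_g \leq \tilde{C}(M)(1-R^2)^{\delta},
\]
which tends to $0$ as $R \to 1^-$. Combined with $\int_{\{|x|>R\}} g\,dv_g \to 0$ (since $g \in L^1$), this establishes tightness, and Vitali's theorem concludes the proof. Compared with the radial Proposition~\ref{L3.3}, the new ingredient is that the $(1-|x|^2)^{\delta}$ decay from $(C5)$ together with the hyperbolic $L^2$-bound coming from the spectral gap replaces the pointwise radial estimate \eqref{radialestimate} used there.
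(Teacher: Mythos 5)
Your proof is correct and its decisive step --- controlling $\int_{\{|x|>R\}} F(x,u_k)\,dv_g$ by splitting according to $|u_k|\lessgtr M$, using $(C5)$ together with the spectral gap \eqref{firsteigen} on the bounded part and $(\overline{C3})$ plus the Chebyshev bound $M^{-1}\sup_k\int_{\bn} f(x,u_k)u_k\,dv_g$ on the unbounded part --- is exactly the estimate in the paper's proof. The remaining differences are cosmetic: you package the argument via Vitali's theorem and spell out the interior convergence, and your handling of the $g$-term uses only $g\in L^1(\bn,dv_g)$ rather than the $L^p$, $p\in(1,2]$, integrability the paper invokes via H\"older.
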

\begin{proof}
 It is enough to show that $\int_{\{|x| > \alpha \}} F(x,u_k) \ dv_g$ can be made arbitrarily small
 by choosing $\alpha$ close to $1.$ Indeed
 \begin{align*}
  \int_{\{|x| > \alpha\}} F(x,u_k) \ dv_g &\leq C \int_{\{|x| > \alpha\} \cap \{|u_k| \leq N \} } 
  (1 - |x|^2)^{\delta} (e^{(\la + \epsilon) u_k^2} - 1) \ dv_g 
  + \int_{\{ |x| > \alpha \} \cap \{|u_k| > N \}} F(x,u_k) \ dv_g \\ 
  &\leq Ce^{(\la + \epsilon) N^2} (1 - \alpha)^{\delta} ||u_k||^2 + 
  \frac{C}{N} \int_{\bn} (g(x)u_k + f(x,u_k)u_k) \ dv_g \\
  &\leq C e^{(\la + \epsilon)N^2}(1 - \alpha)^{\delta} + \frac{C}{N}.
 \end{align*} 
Here we used $g \in L^p(\bn, dv_g)$ for some $p \in (1,2],$ and this completes the proof.
\end{proof}
\begin{lem} \label{nonradial L2}
 Given $\mu > 0, $ there exists a constant $C(\mu) > 0,$ such that
 \begin{align} \label{nonradial bound using I}
  \int_{\bn} f(x,u)u \ dv_g \leq C(\mu) (1 + I_{\lambda}(u)) + \mu ||u||^2, \ \ \ \mbox{for all} \ u \in H^1(\bn).
 \end{align}
\end{lem}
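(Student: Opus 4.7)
The strategy is to mirror the radial proof of Lemma~\ref{L3.1}(ii), replacing the radial confinement of the set $\{u > \delta\}$ to a compact subset of $\bn$ (no longer available) with a two-region decomposition of the boundary-adjacent part of $\bn$ based on assumption $(C5)$. Without loss of generality assume $u \geq 0$ and fix $\mu > 0$. Following \eqref{3.1}--\eqref{3.1.1} verbatim, with $(\overline{C3})$ in place of $(C3)$, one obtains
\begin{equation*}
\int_{\bn} f(x,u) u \, dv_g \leq 8M_1 \int_{\{u \leq 4M_1\}} f(x,u) \, dv_g + 4 I_{\la}(u) + C_0,
\end{equation*}
reducing the problem to a bound of the form $\int_{\{u \leq 4M_1\}} f(x,u) \, dv_g \leq \tilde{C}(\mu) + \tilde{\mu} ||u||^2$ for an arbitrarily small $\tilde{\mu}$.

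Next I split $\{u \leq 4M_1\}$ into three regions using parameters $\delta \in (0,1)$ and $\alpha_0 \in (0,1)$ still to be chosen:
\begin{equation*}
R_1 = \{u \leq \delta\}, \quad R_2 = \{\delta < u \leq 4M_1,\, |x| \leq \alpha_0\}, \quad R_3 = \{\delta < u \leq 4M_1,\, |x| > \alpha_0\}.
\end{equation*}
The estimate on $R_1$ proceeds as in \eqref{3.3.1}: by $(C1)$ combined with $(e^{\la u^2}-1) \leq C u^2$ on $\{u \leq 1\}$, one gets $\int_{R_1} f \, dv_g \leq C \delta^a ||u||^2$, which is at most $\tilde{\mu}||u||^2/2$ once $\delta$ is small enough. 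On the bounded region $R_2$, $(C1)$ yields $h$ uniformly bounded for $|u| \leq 4M_1$, while $(e^{\la u^2} - 1)$ is bounded, so $\int_{R_2} f \, dv_g \leq C(\delta,\alpha_0)$, a finite constant — this plays the role of the uniform bound obtained in \eqref{3.3} via \eqref{radialestimate} in the radial case.

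The crucial new estimate is on $R_3$, where I invoke $(C5)$: for $N = 4M_1$ there exist $\delta_\star > 0$ and $\alpha_\star \in (0,1)$ such that $h(x,u) \leq C(1 - |x|^2)^{\delta_\star}$ on $R_3$, provided $\alpha_0 \geq \alpha_\star$. Combined with $(e^{\la u^2} - 1) \leq C$ and the Chebyshev-type bound $\chi_{\{u > \delta\}} \leq u^2/\delta^2$, this gives
\begin{equation*}
\int_{R_3} f(x,u) \, dv_g \leq \frac{C}{\delta^2} \int_{\{|x| > \alpha_0\}} (1 - |x|^2)^{\delta_\star} u^2 \, dv_g.
\end{equation*}
I then apply H\"older with conjugate exponents $(q, q')$ for some $q > 1/\delta_\star$, so that $\int_{\{|x| > \alpha_0\}} (1 - |x|^2)^{\delta_\star q - 2} \, dx \to 0$ as $\alpha_0 \uparrow 1$, and invoke the Sobolev embedding $||u||_{L^{2q'}(dv_g)} \leq C_{q'} ||u||$ which in dimension two holds for every $q' > 1$ by \eqref{p}. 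This bounds the right-hand side by $(C/\delta^2) \, \varepsilon(\alpha_0) \, ||u||^2$, and with $\delta$ already fixed, choosing $\alpha_0$ close enough to $1$ makes this term absorbable into $\tilde\mu ||u||^2 / 2$, completing the proof.

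The main obstacle is precisely the absence of the radial decay estimate \eqref{radialestimate}, which in the radial case automatically confines $\{\delta < u \leq 4M_1\}$ to a fixed compact set and yields a constant bound on the intermediate piece. In the non-radial setting this effect must be recovered through $(C5)$ — which forces $h(\cdot, t)$ to decay toward the ideal boundary uniformly on bounded ranges of $t$ — combined with the higher Sobolev integrability of $H^1(\bn)$-functions freely available in dimension two.
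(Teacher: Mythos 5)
Your proof is correct and follows essentially the same approach as the paper: reduce to bounding $\int_{\{u\leq 4M_1\}} f(x,u)\,dv_g$ as in Lemma \ref{L3.1}, bound the interior piece $\{|x|\leq\alpha_0\}$ by a constant, and use $(C5)$ to make the near-boundary piece absorbable into $\mu\|u\|^2$ by taking $\alpha_0$ close to $1$. The only difference is that your estimate on $R_3$ via Chebyshev, H\"older and the higher Sobolev embedding is more elaborate than necessary --- the paper simply bounds $(1-|x|^2)^{\delta}\leq(1-\alpha_0^2)^{\delta}$, uses $(e^{\la u^2}-1)\leq Cu^2$ on $\{u\leq 4M_1\}$, and invokes the Poincar\'e inequality \eqref{firsteigen} to get $C(1-\alpha_0^2)^{\delta}\|u\|^2$ directly.
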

\begin{proof}
 We note that,
 \begin{align} \label{bound using I}
  \int_{\{ u \leq 4M_1\}} f(x,u) \ dv_g &\leq \int_{\{ u \leq 4M_1\} \cap \{|x| > \alpha\}} f(x,u) \ dv_g + C(\alpha) \notag \\
  &\leq C (1 - \alpha)^{\delta} \int_{\bn} u^2 \ dv_g + C(\alpha) \notag \\
  &\leq C (1 - \alpha)^{\delta} ||u||^2 + C(\alpha) = \frac{\mu}{2}||u||^2 + C(\mu),
 \end{align}
by choosing $\alpha$ close to $1.$ Therefore proceeding as in Lemma \ref{L3.1} and using \eqref{bound using I} we 
get \eqref{nonradial bound using I}.
\end{proof}

\begin{lem} \label{nonradial L3}
 Let $f(x,t) = h(x,t)(e^{\la t^2} - 1)$ be a function of critical growth, then
 \begin{align}
  d^2 := \sup \{ c^2 : \sup_{u \in H^1(\bn), ||u|| \leq 1} \int_{\bn} f(x,cu) u \ dv_g < +\infty \} = \frac{4\pi}{\la}
 \end{align}
\end{lem}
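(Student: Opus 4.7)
The plan is to adapt the proof of Lemma \ref{L3.2} to the non-radial setting. There, the pointwise radial decay \eqref{radialestimate} produced $|u(x)|\leq \tilde C$ on $\{|x|>\alpha\}$ for $\|u\|\leq 1$, which controlled $\int_{\{|x|>\alpha\}}(e^{\la(1-\epsilon)c^2 u^2}-1)\,dv_g$. This ingredient is unavailable in $H^1(\bn)$; I would replace it by two global facts on $\bn$: the hyperbolic Moser-Trudinger inequality (the Mancini-Sandeep theorem from the introduction applied with $h=g_{\bn}$), giving $\int_{\bn}(e^{4\pi u^2}-1)\,dv_g\leq C$ for $\|u\|\leq 1$, and the spectral Poincar\'e inequality $\int_{\bn} u^2\,dv_g\leq 4\|u\|^2$ from \eqref{firsteigen}.

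\textbf{Upper bound} $d^2\leq \tfrac{4\pi}{\la}$: fix $c>0$ for which the supremum is finite, $\epsilon\in(0,1)$ small, and $R_0\in(0,1)$. By (C4) on $\overline{B(0,R_0)}$ there exist $C_2,t_2>0$ with $f(x,t)t\geq C_2(e^{\la(1-\epsilon)t^2}-1)$ for $t\geq t_2$ and $|x|\leq R_0$. Testing with the Moser function $m_{l,R_0}$ (of unit norm, supported in $\overline{B(0,R_0)}$, equal to $(2\pi)^{-1/2}\sqrt{\log(R_0/l)}$ on $\{|x|<l\}$), for $l$ small we have $cm_{l,R_0}\geq t_2$ on $\{|x|<l\}$, and since $dv_g\geq 4\,dx$ everywhere:
\begin{equation*}
 \int_{\bn} f(x,c\, m_{l,R_0})\,m_{l,R_0}\,dv_g \;\geq\; \frac{4\pi C_2}{c}\, l^2\Bigl[(R_0/l)^{\la(1-\epsilon)c^2/(2\pi)}-1\Bigr].
\end{equation*}
If $\la(1-\epsilon)c^2>4\pi$ the right-hand side diverges as $l\to 0$, contradicting finiteness of the supremum; letting $\epsilon\to 0$ gives $c^2\leq 4\pi/\la$.

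\textbf{Lower bound} $d^2\geq \tfrac{4\pi}{\la}$: fix $c$ with $c^2<4\pi/\la$ and $\tilde\epsilon>0$ with $(\la+2\tilde\epsilon)c^2\leq 4\pi$. By (C4) and (C1) pick $N_0$ so that $h(x,t)\leq e^{\tilde\epsilon t^2}$ and $cu\leq e^{\tilde\epsilon c^2 u^2}$ whenever $cu\geq N_0$, while $h$ is uniformly bounded on $\bn\times[-N_0,N_0]$. On $\{|cu|>N_0\}$ this gives
\begin{equation*}
 f(x,cu)\cdot cu \;\leq\; cu\,e^{\tilde\epsilon c^2 u^2}(e^{\la c^2 u^2}-1)\;\leq\; e^{(\la+2\tilde\epsilon)c^2 u^2}-1,
\end{equation*}
whose integral is uniformly bounded for $\|u\|\leq 1$ by hyperbolic M-T, since $(\la+2\tilde\epsilon)c^2\leq 4\pi$. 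On $\{|cu|\leq N_0\}$, boundedness of $h$ together with $e^s-1\leq se^s$ yields $f(x,cu)\leq C(cu)^2$ and hence $f(x,cu)\,u\leq C' u^2$, whose integral is controlled by $\int u^2\,dv_g\leq 4\|u\|^2\leq 4$. The two pieces together yield a uniform bound on $\int_{\bn} f(x,cu)\,u\,dv_g$ for every $c^2<4\pi/\la$.

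The main obstacle is precisely the absence of the radial pointwise decay that drove the proof of Lemma \ref{L3.2} on $\{|x|>\alpha\}$; once one commits to the pair (hyperbolic M-T on $\bn$ together with the $\tfrac14$-spectral gap) as its substitute, the rest of the argument is bookkeeping. Note that the growth conditions $(\overline{C3})$, $(C5)$, $(C6)$ introduced for the non-radial setting play no role in this particular lemma: they enter instead into the adjacent convergence lemma and into Lemma \ref{nonradial L2}, whose proofs genuinely rely on them.
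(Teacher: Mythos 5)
Your proof is correct, and on the upper bound it takes a cleaner route than the paper. For $d^2\ge 4\pi/\la$ you follow essentially the argument of Lemma \ref{L3.2}: split at $\{|cu|\le N_0\}$, bound $h$ by $e^{\tilde\epsilon t^2}$ via (C4), absorb the product into $e^{(\la+2\tilde\epsilon)c^2u^2}-1$, and invoke the hyperbolic Moser--Trudinger inequality together with the spectral gap $\int_{\bn} u^2\,dv_g\le 4\|u\|^2$; this is exactly what the authors mean by ``the reverse inequality is similar''. For $d^2\le 4\pi/\la$ the paper instead uses $(C6)$ to bound $f(x,cu)u$ from below on $\{|x|>\alpha\}$ by a weighted exponential, deduces finiteness of $\sup_{\|u\|\le1}\int_{\bn}(1-|x|^2)^{l-2}(e^{\la(1-\epsilon)c^2u^2}-1)\,dx$, and then appeals to the sharpness of the resulting (weighted) Moser--Trudinger supremum. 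You bypass the outer region entirely by testing with the Moser functions $m_{l,R_0}$, which are supported in $\overline{B(0,R_0)}$ where only the compact-set half of (C4) is needed; since $f(x,t)t\ge0$ the contribution of $\{|x|\ge l\}$ can simply be dropped, and the computation $l^2(R_0/l)^{\la(1-\epsilon)c^2/2\pi}\to\infty$ when $\la(1-\epsilon)c^2>4\pi$ closes the argument. This is the same mechanism that underlies the sharpness statement the paper invokes, but writing it out directly shows --- as you observe --- that $(\overline{C3})$, $(C5)$ and $(C6)$ play no role in this particular lemma, whereas the paper's proof as written does route the upper bound through $(C6)$. Both arguments are sound; yours has the merit of isolating the minimal hypotheses actually used.
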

\begin{proof}
 The proof goes in the same line as before with obvious modifications. We will only mention the steps which 
 differ from the previous one. We see that,
 \begin{align*}
  \int_{\{|x| > \alpha \}} f(x, cu) u \ dv_g & \geq C \int_{\{|x| > \alpha \} \cap \{u \geq t _0 \}} 
  h(x,cu)(e^{\la c^2 u^2} - 1) \ dv_g \\  
  &\geq C \int_{\{|x| > \alpha \} \cap \{u \geq t_0\}} ( 1 - |x|^2)^{l-2} (e^{\la(1 - \epsilon) c^2 u^2} - 1) \ dx. 
 \end{align*}
From this and proceeding as in Lemma \ref{L3.3} we conclude that,
\begin{align*}
\sup_{u \in H^1(\bn), ||u||^2 \leq 1} \int_{\bn} (1 - |x|^2)^{l-2} (e^{\la(1 - \epsilon) c^2 u^2} - 1) \ dx < +\infty,
\end{align*}
and hence $\la d^2 \leq 4\pi.$ The reverse inequality is similar as in Lemma \ref{L3.3}. 
 \end{proof}
 
 \begin{lem}
  Let $\{u_k\}$ and $\{v_k\}$ be bounded sequences in $H^1(\bn)$ converging weakly to $u$ and
$v$ respectively. Further assume that $\sup_k ||u_k||^2  < \frac{4\pi}{\la},$
then for all $l \geq 2,$
\begin{align} \label{condition PS}
 \lim_{k \rightarrow \infty} \int_{\bn} \frac{f(x,u_k)}{u_k} v_k^l \ dv_g = \int_{\bn} \frac{f(x,u)}{u} v^l \ dv_g.
\end{align}
\end{lem}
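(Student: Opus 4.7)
The plan is to adapt the argument of Proposition \ref{L3.4} to the non-radial setting, replacing the pointwise decay of radial functions near $\partial\bn$ by the boundary vanishing of $h$ provided by $(C5)$. For small $\mu>0$ and large $N$, I split $\bn$ according to the size of $u_k$ into the three regions $\{|u_k|>N\}$, $\{|u_k|\le\mu\}$ and $\{\mu\le|u_k|\le N\}$, and estimate each piece separately, sending at the end $\alpha\to 1$, $N\to\infty$, $\mu\to 0$.

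Since $\sup_k||u_k||^2<4\pi/\la$, I fix $p>1$ and $\epsilon>0$ with $p(\la+\epsilon)\sup_k||u_k||^2<4\pi$ and invoke the hyperbolic Moser--Trudinger inequality to obtain $\sup_k\int_\bn(e^{p(\la+\epsilon)u_k^2}-1)\,dv_g<\infty$. Combined with (C4), which yields $h(x,t)\le C_\epsilon e^{\epsilon t^2}$ for $|t|$ large, this gives a uniform $L^p$ bound on $f(x,u_k)\mathbf{1}_{\{|u_k|>N_0\}}$; a H\"older estimate against $||v_k||_{L^{lq}}$ (finite since $H^1(\bn)\hookrightarrow L^{lq}(\bn)$ for all $lq\in[2,\infty)$, by the hyperbolic Poincar\'e inequality together with Moser--Trudinger) controls the large-value piece by $O(1/N)$. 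For the small-value piece, the near-zero asymptotics $h(x,t)=O(t^a)$ from (C1) and $e^{\la t^2}-1=O(t^2)$ yield $|f(x,u_k)/u_k|\le C|u_k|^{a+1}$ on $\{|u_k|\le\mu\}$, so integration against $|v_k|^l$ gives a bound of order $\mu^{a+1}$ uniformly in $k$.

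The genuinely new difficulty is the intermediate region $\{\mu\le|u_k|\le N\}$, which in the radial proof was handled via the pointwise decay of $v_k$ at infinity. I further split this set into $\{|x|\le\alpha\}\cap\{\mu\le|u_k|\le N\}$ and its complement. On the boundary annulus, hypothesis (C5) gives $h(x,u_k)\le C(N)(1-|x|^2)^{\delta}$, hence $|f(x,u_k)/u_k|\le C(\mu,N)(1-|x|^2)^{\delta}$; a H\"older inequality with respect to $dv_g$ and conjugate exponents $s,s'$ with $s>1/\delta$ (so that $s\delta-2>-1$) produces the bound
\[
C(\mu,N)\Bigl(\int_{\{|x|>\alpha\}}(1-|x|^2)^{s\delta-2}\,dx\Bigr)^{1/s}\,||v_k||_{L^{ls'}(\bn)}^{l},
\]
whose first factor vanishes as $\alpha\to 1$ by dominated convergence and whose second factor is uniformly bounded by Moser--Trudinger. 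On the compact piece $\{|x|\le\alpha\}\cap\{\mu\le|u_k|\le N\}$ the factor $f(x,u_k)/u_k$ is uniformly bounded (by the $L^\infty$ clause of (C1)); Rellich--Kondrachov gives $v_k\to v$ in $L^l$ there and $u_k\to u$ a.e., so a standard dominated convergence argument (choosing $\mu,N$ so that $\{|u|=\mu\}$ and $\{|u|=N\}$ are null, which holds for a.e.\ choice) yields convergence to the analogous $(u,v)$-integral.

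The main obstacle is precisely the boundary-annulus estimate above: it forces one to balance the vanishing rate $(1-|x|^2)^{\delta}$ of $h$ furnished by (C5) against the $(1-|x|^2)^{-2}$ blow-up of the hyperbolic volume form, and it is this interplay that renders the assumption $\delta>0$ in (C5) indispensable for the non-radial extension and fixes the admissible range $s>1/\delta$ of H\"older exponents. Once these four estimates are assembled, routine diagonal passage in $\alpha$, $N$, $\mu$ finishes the proof.
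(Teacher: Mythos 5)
Your proposal is correct and follows essentially the same route as the paper: split off $\{|u_k|>N\}$ via the uniform $L^p$ bound on $f(x,u_k)$ coming from $\sup_k\|u_k\|^2<4\pi/\la$ and Moser--Trudinger, kill the boundary annulus $\{|x|>\alpha\}\cap\{|u_k|\le N\}$ using the decay $(1-|x|^2)^{\delta}$ from $(C5)$, and pass to the limit on the remaining compact piece by local compactness and dominated convergence. Your extra split $\{|u_k|\le\mu\}$ and the H\"older exponent $s>1/\delta$ on the annulus are harmless refinements of the same estimate the paper states directly as $C(N)e^{(\la+\epsilon)N^2}(1-\alpha)^{\delta}$.
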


\begin{proof}
 As before we can show that
 \begin{align*}
  \int_{\{|u_k| > N\}} \frac{f(x,u_k)}{u_k} v_k^l \ dv_g = O\left(\frac{1}{N}\right).
 \end{align*}
Now we estimate $\int_{\{|u_k| \leq N\} \cap \{|x| > \alpha \}} \frac{f(x,u_k)}{u_k} v_k^l \ dv_g.$
In fact we can show that,
\begin{align*}
 \int_{\{|u_k| \leq N\} \cap \{|x| > \alpha \}} \frac{f(x,u_k)}{u_k} v_k^l \ dv_g
 \leq C(N)e^{(\la + \epsilon) N^2} (1 - \alpha)^{\delta},
\end{align*}
so that,
\begin{align} \label{convergence for PS condition}
 \int_{\bn} \frac{f(x,u_k)}{u_k} v_k^l \ dv_g = 
 \int_{\{ |u_k| \leq N\} \cap \{|x| \leq \alpha \}} \frac{f(x,u_k)}{u_k} v_k^l \ dv_g + O\left(\frac{1}{N}\right)
 + C(N) e^{(\la + \epsilon) N^2} (1 - \alpha)^{\delta}.
\end{align}
From \eqref{convergence for PS condition} we can easily see that \eqref{condition PS} holds.
\end{proof}
A similar argument gives,
\begin{lem}
 Let $\{u_k\}$ be a sequence in $H^1(\bn)$ converging weakly to a non-zero function $u$
and assume that : 

(i) there exists $c \in (0 , \frac{2\pi}{\la})$ such that $J_{\la} (u_k) \rightarrow c,$ 

(ii) $||u||^2 \geq \int_{\bn} f(x,u)u \ dv_g,$ 

(iii) $\sup_k \int_{\bn} f(x,u_k) u_k \ dv_g < +\infty.$ Then
\begin{align}
 \lim_{k \rightarrow \infty} \int_{\bn} f(x,u_k)u_k \ dv_g = \int_{\bn} f(x,u)u \ dv_g. 
\end{align}
\end{lem}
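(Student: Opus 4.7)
The plan is to mirror the radial Proposition \ref{L3.6}, using the appendix's first lemma and Lemma \ref{L3.5} (valid on all of $H^{1}(\bn)$, not only on $H^{1}_{R}(\bn)$) in place of their radial analogs, and replacing the radial decay estimate \eqref{radialestimate} by the boundary decay of $h$ encoded in $(C5)$.

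First I would establish the uniform exponential integrability $\sup_k \int_{\bn}(e^{(1+\epsilon)\la u_k^2}-1)\,dv_g < \infty$ for some $\epsilon > 0$. By (iii) and the first lemma of this appendix, $\int F(x,u_k)\,dv_g \to \int F(x,u)\,dv_g$; combined with $J_{\la}(u_k)\to c$, this yields $\lim_k \|u_k\|^2 = 2c + 2\int F(x,u)\,dv_g =: s^2$. Using $(C2)$ and hypothesis (ii), $2\int F(x,u)\,dv_g \leq \int f(x,u)u\,dv_g \leq \|u\|^2$, hence $s^2 - \|u\|^2 \leq 2c < 4\pi/\la$. Picking $\epsilon$ so that $(1+\epsilon)\la(s^2 - \|u\|^2) < 4\pi$ and applying Lemma \ref{L3.5} to the normalized sequence $v_k = u_k/\|u_k\|$ (which converges weakly to a non-zero limit of norm strictly less than one) gives the desired estimate.

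Next I would fix $N>0$ large and $\alpha\in(0,1)$ close to $1$ and decompose the integral according to the four regions obtained by crossing $\{|x|\leq\alpha\}$ vs.\ $\{|x|>\alpha\}$ with $\{|u_k|\leq N\}$ vs.\ $\{|u_k|>N\}$. The tail for $|u_k|>N$ is controlled exactly as in the radial proof: $(C4)$ yields $M_2 := \sup_{x,t} h(x,t)t\,e^{-\la\epsilon t^2/2}<\infty$, so Step 1 gives
\begin{align*}
\int_{\{|u_k|>N\}} f(x,u_k)u_k\,dv_g \leq C e^{-\la\epsilon N^2/2}.
\end{align*}
For the near-boundary moderate piece $\{|x|>\alpha,\, |u_k|\leq N\}$, $(C5)$ supplies $h(x,u_k) \leq C_N (1-|x|^2)^{\delta}$; combining with $|u_k|(e^{\la u_k^2}-1) \leq \la N e^{\la N^2} u_k^2$ for $|u_k|\leq N$ and Poincar\'e \eqref{firsteigen},
\begin{align*}
\int_{\{|x|>\alpha,\,|u_k|\leq N\}} f(x,u_k) u_k\,dv_g \leq C_N'(1-\alpha^2)^{\delta}\int_{\bn} u_k^2\,dv_g \leq C_N''(1-\alpha^2)^{\delta}.
\end{align*}
On the remaining compact piece $\{|x|\leq\alpha,\,|u_k|\leq N\}$ of finite $dv_g$-volume, $(C1)$ makes the integrand bounded by a $k$-independent constant, so upon passing to an a.e.\ convergent subsequence and choosing $N$ with $|\{|u|=N\}|=0$, Lebesgue's dominated convergence theorem identifies the limit with $\int_{\{|x|\leq\alpha,\,|u|\leq N\}} f(x,u)u\,dv_g$. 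Letting $k\to\infty$, then $\alpha\to1$, then $N\to\infty$ (the limit equals $\int_{\bn} f(x,u)u\,dv_g$ by monotone convergence, which is finite by Fatou applied to (iii)) completes the proof.

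The principal obstacle is the near-boundary estimate: the radial argument used the pointwise bound $|u_k(x)|\leq C(1-|x|^2)^{1/2}$ to dispose of this region for free, but no such pointwise decay survives in the non-radial $H^{1}(\bn)$. The remedy is to transfer the required smallness from $u_k$ onto the coefficient $h$ via $(C5)$; the strict positivity $\delta>0$ is precisely what makes $(1-\alpha^2)^{\delta}\to 0$ as $\alpha\to1$, and this is why $(C5)$ cannot be dropped in the non-radial setting.
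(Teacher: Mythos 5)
Your proof is correct and is essentially the argument the paper intends: the paper offers no details here beyond ``a similar argument gives,'' meaning the scheme of Proposition \ref{L3.6} with the radial pointwise decay replaced by the $(C5)$ decay of $h$ near the boundary, which is exactly what you carry out (uniform exponential integrability via Lemma \ref{L3.5} applied to $u_k/\|u_k\|$ with $(1+\epsilon)\la(s^2-\|u\|^2)<4\pi$, the $(C4)$ tail bound for $\{|u_k|>N\}$, the $(1-\alpha^2)^{\delta}$ bound on $\{|x|>\alpha,\,|u_k|\le N\}$, and dominated convergence on the remaining compact piece). Your closing remark correctly identifies why $(C5)$ is the non-radial surrogate for estimate \eqref{radialestimate}.
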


\textbf{Proof of Theorem \ref{main theorem for nonradial}:}\\
We omit the proof because it goes in the same line as in Theorem \ref{mt1} (see section $4$ and section $5$ 
for details).

\textbf{Acknowledgement:} The authors are grateful to Prof. Sandeep for useful discussions during the preparation of the paper. 
The first author is partially supported by the Research Project FIR (Futuro in Ricerca) 2013 \emph{Geometrical and qualitative aspects of PDE's}.
We would like to thank the referee for his precious comments regarding the improvements
of our result in the radial case and his inquisitiveness to know about non radial situations.


\vspace{2.00cm}
(Debdip Ganguly)\\
{\textit E-mail address:} debdip@math.tifrbng.res.in

\vspace{0.50cm}
(Debabrata karmakar)\\
{\textit E-mail address:} debkar@math.tifrbng.res.in


\begin{thebibliography}{99}



\bibitem{ADI} Adimurthi;
 \emph{Existence of positive solutions of the semilinear Dirichlet problem with critical growth for the $N-$Laplacian},
Ann.Scuola Norm.Sup.Pisa Cl.Sci (4) 17, (1990), pp. 393-413. 

\bibitem{ADI1} Adimurthi, S.L.Yadava; \emph{Multiplicity results for semilinear elliptic equations in a bounded domain of 
$\mathbb{R}^{2}$ involving critical exponents}, Ann.Scuola Norm.Sup.Pisa Cl.Sci (4) 17, (1990), pp. 481-504. 

\bibitem{ADI2} Adimurthi, S.L.Yadava; \emph{A note on non-existence of nodal solutions of the semilinear elliptic equations
with critical exponents in $\mathbb{R}^2$ }, Trans. Amer. Math. Soc., 332 (1992) 1, pp. 449-458. 

\bibitem{ADIT} Adimurthi, K. Tintarev;  \emph{On a version of Trudinger-Moser inequality with M\"obius shift invariance},
Calc. Var. Partial Differential Equations 39 (2010), no. 1-2, pp. 203-212.



\bibitem{AYS} Adimurthi, S.L.Yadava, P.N.Srikanth; \emph{Phenomena of critical exponent in $\mathbb{R}^2$}, 
Proc. Roy. Soc. Edinburgh Sect. A 119 (1991), no. 1-2, pp. 19-25. 

 \bibitem{APR} Adimurthi, S.Prashanth; \emph{Critical exponent problem in $\mathbb{R}^{2}$-border-line 
between existence and non-existence of positive solutions for 
Dirichlet problem}, Adv. Differential Equations 5 (2000), no. 1-3, pp. 67-95.

\bibitem{AP} F.V.Atkinson, L.A.Peletier; \emph{Elliptic equation with critical growth}, Math.Inst.Univ.Leiden, Rep 21 (1986).


 \bibitem{B} W. Beckner; \emph{On the Grushin operator and hyperbolic symmetry}, 
 Proc. Amer. Math. Soc. Vol 129 (2001), pp. 1233-1246.
   

\bibitem{EBG} E.Berchio, A.Ferrero, G.Grillo; \emph{Stability and qualitative 
properties of radial solutions of the Lane-Emden-Fowler equation on Riemannian models},
 J. Math. Pures Appl. (9) 102 (2014), no. 1, pp. 1-35.


\bibitem{BGG} M.Bonforte, F.Gazzola,  G.Grillo and J. Luis V\'{a}zquez ; 
\emph{Classification of radial solutions to the Emden \emph{-} Fowler equation on the hyperbolic space}. 
 Calc. Var. Partial Differential Equations 46 (2013), no. 1-2, pp. 375-401.



 \bibitem{PS} M.Bhakta, K.Sandeep;  \emph{Poincar$\acute{e}$- Sobolev equations
 in the hyperbolic space}, 
 Calc. Var. Partial Differential Equations 44 (2012), no. 1-2, pp. 247-269. 

\bibitem{LM} L.Battaglia, G.Mancini; \emph{Remarks on the Moser Trudinger inequality}, 
Adv. Nonlinear Anal. 2 (2013), no. 4,  pp. 389-425.

\bibitem{HS1} D.Castorina,  I.Fabbri, G.Mancini, K.Sandeep; \emph{Hardy- Sobolev
 inequalities and hyperbolic symmetry},  Atti Accad. Naz. Lincei Cl. Sci. Fis. Mat. Natur. Tend. Lincei (9) 
Mat. Appl. 19 (2008), pp. 189-197.
 
 \bibitem{HS} D.Castorina,  I.Fabbri, G.Mancini, K.Sandeep; \emph{Hardy- Sobolev
 extremals,hyperbolic symmetry and 
 scalar curvature equations}, Journal of Differential Equations 246 (2009),  pp. 1187-1206.

\bibitem{CC1} L.Carleson, S.Y.A. Chang; \emph{On the existence of an extremal function for an inequality of J. Moser},
Bull. Sci. Math. 110, (1986), pp. 113-127.


\bibitem{CSS} G.Cerami, S.Solimini, M.Struwe; \emph{Some existence results for superlinear elliptic boundary 
 value problems involving critical exponents}, J.Funct. Anal. 69 (1986), pp. 289-306.


\bibitem{DR1} de Figueiredo, D.G., do O, Joao Marcos,  J.M., Ruf, B; \emph{On an inequality by N. Trudinger and J. Moser and related
elliptic equations}, Commun. Pure Appl. Math. 55(2), (2002)  pp. 135-152.




\bibitem{DR3} de Figueiredo, D.G., do  O, Joao Marcos, J.M., Ruf, B; \emph{Critical and subcritical elliptic systems in dimension two},
Indiana Univ. Math. J. 53(4), (2004), pp. 1037-1054. 

 
\bibitem{DS} D.Ganguly, K.Sandeep; \emph{Sign changing solutions of the Brezis-Nirenberg problem in the Hyperbolic space},
Calc. Var. Partial Differential Equations 50 (2014), pp. 69-91.


\bibitem{DS1} D.Ganguly, K.Sandeep; \emph{ Nondegeneracy of positive solutions of semilinear elliptic 
problem in the hyperbolic space},  Commun. Contemp. Math. 17 (2015), no. 1, 1450019, 13 pp.


\bibitem{PLL} P.L.Lions; \emph{The Concentration Compactness principle in the calculus of variations, part-I},
Revista mathematica Iberoamericana, No.1 (1985), pp. 185-201.



\bibitem{pl1} P.L.Lions; \emph{The concentration compactness principle in the calculus of variation. The limit case. I},
 Ann.Inst. H. Poincar\'{e} 1 (1984), pp. 223-283.
 

\bibitem{MS1} G.Mancini, K.Sandeep; \emph{Moser-Trudinger inequality on
conformal discs},  Commun. Contemp. Math. 12 (2010), no. 6,  pp. 1055-1068. 


\bibitem{MS2} G.Mancini, K.Sandeep; \emph{Extremals for Sobolev and Moser inequalities in hyperbolic space},
 Milan J. Math. 79 (2011), no. 1,  pp. 273-283. 


\bibitem{TMS} G.Mancini, K. Tintarev, K.Sandeep; \emph{Trudinger-Moser inequality in the
 hyperbolic space $\mathbb{H}^{N}$},  Adv. Nonlinear Anal. 2 (2013), no. 3, pp. 309-324. 



\bibitem{MS} G.Mancini, K.Sandeep; \emph{On a semilinear elliptic
equation in $H^n$}, Ann. Sc. Norm. Super. Pisa Cl. Sci. (5), {\bf 7}, no. 4, (2008),  pp.  635-671. 




\bibitem{JM} J. Moser, \emph{A sharp form of an inequality by N.Trudinger}, Indiana Univ.
Math. J. 20 (1970/71), pp. 1077-1092.


\bibitem{LLG}  L.Nguyen, L.Guozhen; \emph{Elliptic equations and systems 
with subcritical and critical exponential growth without the Ambrosetti-Rabinowitz condition}.
 J. Geom. Anal. 24 (2014), no. 1,  pp. 118-143.




 
\bibitem{ZN} Z.Nehari; \emph{Characteristic values associated with a class of nonlinear second-order differential equations},
Acta Mathematica, 105 (1961), pp. 141-175.


\bibitem{PR} R.Palais; \emph{The principle of symmetric criticality}, Commun. Math. Phys. 69, (1979), pp.19-30.

\bibitem{PANDA} R.Panda; \emph{On semilinear Neumann 
problems with critical growth for the n-Laplacian}, Nonlinear Anal. 26 (1996), no. 8, pp. 1347-1366.

\bibitem{PANDA1} R.Panda; \emph{Nontrivial solution of a quasilinear elliptic equation with critical growth in $\mathbb{R}^{N},$}
Proc. Indian Acad. Sci. Math. Sci. 105 (1995), no. 4, pp. 425-444.


\bibitem{JR} John.G. Ratcliffe; \emph{Foundations of Hyperbolic Manifolds}, Graduate
Texts in Mathematics, Vol-149, Springer.



\bibitem{TINU} K.Tintarev; \emph{Is the Trudinger-Moser nonlinearity a true critical nonlinearity?}
Discrete Contin. Dyn. Syst. 2011, Dynamical systems, differential equations and applications. 
8th AIMS Conference. Suppl. Vol. II, 1378-1384.






 \end{thebibliography}
\end{document}